%
%
%
%
\documentclass[12pt,leqno,twoside]{amsart}
\usepackage{amssymb,amsmath,amsthm,soul,color}
\usepackage{t1enc}
\usepackage[cp1250]{inputenc}
\usepackage{a4,indentfirst,latexsym}
\usepackage{graphics}
\usepackage{mathrsfs}
\usepackage{cite,enumitem,graphicx}
\usepackage[colorlinks=true,urlcolor=blue,
citecolor=red,linkcolor=blue,linktocpage,pdfpagelabels,
bookmarksnumbered,bookmarksopen]{hyperref}
\usepackage[english]{babel}
\usepackage[left=2.61cm,right=2.61cm,top=2.72cm,bottom=2.72cm]{geometry}
\usepackage[metapost]{mfpic}
\usepackage[colorinlistoftodos]{todonotes}
\usepackage[normalem]{ulem}


\parskip 4mm
\parindent 7mm \voffset -7mm
\hoffset -2mm \textwidth 168mm \textheight 225mm \oddsidemargin
0mm\evensidemargin 0mm\footnotesep 3mm \hbadness 10000

\pagestyle{myheadings} \markboth{\underline{}}{
	\underline{Time-harmonic Maxwell equations}}

\newtheorem{Th}{Theorem}[section]
\newtheorem{Prop}[Th]{Proposition}
\newtheorem{Lem}[Th]{Lemma}
\newtheorem{Cor}[Th]{Corollary}

\newenvironment{altproof}[1]
{\noindent
	{\em Proof of {#1}}.}
{\nopagebreak\mbox{}\hfill $\Box$\par\addvspace{0.5cm}}

\newcommand{\wt}{\widetilde}

\newcommand{\vp}{\varphi}

\newcommand{\eps}{\varepsilon}

\def\div{\mathop{\mathrm{div}}}

\def\Z{\mathbb{Z}}

\def\R{\mathbb{R}}

\def\curl{\mathrm{curl}}
\def\dim{\mathrm{dim}}


\def\V{\mathcal{V}}
\def\E{\mathcal{E}}
\def\J{\mathcal{J}}

\def\W{\mathcal{W}}
\def\D{\mathcal{D}}

\def\tJ{\wt{\cJ}}

\def\rh{\rightharpoonup}

\newcommand{\cB}{{\mathcal B}}
\newcommand{\cC}{{\mathcal C}}
\newcommand{\cD}{{\mathcal D}}
\newcommand{\cE}{{\mathcal E}}

\newcommand{\cH}{{\mathcal H}}
\newcommand{\cI}{{\mathcal I}}
\newcommand{\cJ}{{\mathcal J}}
\newcommand{\cK}{{\mathcal K}}
\newcommand{\cL}{{\mathcal L}}
\newcommand{\cM}{{\mathcal M}}
\newcommand{\cN}{{\mathcal N}}
\newcommand{\cO}{{\mathcal O}}
\newcommand{\cP}{{\mathcal P}}

\newcommand{\cT}{{\mathcal T}}

\newcommand{\cV}{{\mathcal V}}
\newcommand{\cW}{{\mathcal W}}

\renewcommand{\dim}{{\rm dim}\,}

\newcommand{\al}{\alpha}

\newcommand{\ga}{\gamma}

\newcommand{\Ga}{\Gamma}
\newcommand{\Om}{\Omega}

\newcommand{\rr}{\R^3}

\newcommand{\mbS}{\mathbb{S}}

\def\curlop{\nabla\times}
\newcommand{\weakto}{\rightharpoonup}
\newcommand{\pa}{\partial}

\newcommand{\tX}{\widetilde{X}}
\newcommand{\tu}{\widetilde{u}}
\newcommand{\tv}{\widetilde{v}}

\newcommand{\cTto}{\stackrel{\cT}{\longrightarrow}}

\numberwithin{equation}{section}

\title[Nonlinear Curl-curl problems]{Nonlinear curl-curl problems in $\mathbb{R}^3$}

\author[J. Mederski]{Jaros\l aw Mederski}
\author[J. Schino]{Jacopo Schino}

\address[J. Mederski and J. Schino]{\newline\indent
	{\newline\indent 
		Institute of Mathematics,
		\newline\indent 
		Polish Academy of Sciences,
		\newline\indent 
		ul. \'Sniadeckich 8, 00-656 Warsaw, Poland}
}
\email{\href{mailto:jmederski@impan.pl}{jmederski@impan.pl}}
\email{\href{mailto:jschino@impan.pl}{jschino@impan.pl}}

\subjclass[2010]{Primary: 35Q60; Secondary: 35J20, 78A25.}
\keywords{Time-harmonic Maxwell equations, ground state, variational methods, strongly indefinite functional, curl-curl problem, Orlicz spaces, $N$-functions}

\begin{document}
	\begin{abstract} We survey recent results concerning  ground states and bound states  $u\colon\mathbb{R}^3\to\mathbb{R}^3$ to the  curl-curl problem
		$$\nabla\times(\nabla\times u)+V(x)u= f(x,u) \qquad\hbox{ in } \mathbb{R}^3,$$
		which originates from the nonlinear Maxwell equations. The energy functional associated with this problem is strongly indefinite due to the infinite dimensional kernel of $\nabla\times(\nabla\times \cdot)$. The growth of the nonlinearity $f$ is superlinear and subcritical at infinity or purely critical and we demonstrate a variational approach to the problem involving the generalized Nehari manifold. We also present some refinements of known results.
	\end{abstract}
	
	\maketitle

	\section*{Introduction}
	\setcounter{section}{1}
	
	We look for weak solutions to the {\em semilinear  curl-curl problem}
	\begin{equation}\label{eq}
		\nabla\times(\nabla\times u) +V(x)u= f(x,u), \qquad x\in\R^3,
	\end{equation}
	originating from the {\em Maxwell equations} in the differential form
	\begin{equation}\label{e-Maxwell}
		\begin{cases}
			\nabla\times\cH=\cJ+\partial_t\cD \, & \text{(Amp\`ere's Law)}\\
			\nabla\cdot\cD=\rho \, & \text{(Gauss's Electric Law)}\\
			\nabla\times\cE=-\partial_t\cB \, & \text{(Faraday's Law)}\\
			\nabla\cdot\cB=0 \, & \text{(Gauss's Magnetic Law)},
		\end{cases}
	\end{equation}
	where $\cH,\cJ,\cD,\cE,\cB\colon\rr\times\R\to\rr$ are time-dependent vector fields and $\rho\colon\rr\times\R\to\R$ is the electric charge density. In particular, $\cH$ is the magnetic intensity field, $\cJ$ the electric current intensity, $\cD$ the electric displacement field, $\cE$ the electric field, and $\cB$ the magnetic induction. We consider as well the {\em constitutive relations}
	\begin{equation}\label{e-const}
		\begin{cases}
			\cD=\epsilon\cE+\cP\\
			\cH=\frac1\mu\cB-\cM,
		\end{cases}
	\end{equation}
	where $\cP,\cM\colon\rr\times\R\to\rr$ are, respectively, the polarization field (which depends on $\cE$, in general nonlinearly) and the magnetization field, while $\epsilon,\mu\colon\rr\to\R$ are, respectively, the permittivity and the permeability of the material. \\
	\indent In order to derive \eqref{eq} we make additional assumptions about the physical model. We begin by considering absence of electric charges ($\rho=0$), electric currents ($\cJ=0$), and magnetization ($\cM=0$).  Then, plugging \eqref{e-const} into \eqref{e-Maxwell} and differentiating with respect to the time variable we obtain the {\em electromagnetic wave equation}
	\begin{equation}\label{eqem}
	\nabla\times\left(\frac1\mu\nabla\times\cE\right)+\epsilon\partial_t^2\cE=-\partial_t^2\cP.
	\end{equation}
	Moreover, we assume that $\cE$ and $\cP$ are \textit{monochromatic waves}, i.e., $\cE(x,t)=\cos(\omega t)u(x)$ and $\cP(x,t)=\cos(\omega t)P(x)$ for some $\omega\in\R$ and $u,P\colon\rr\to\rr$, which leads to the curl-curl problem \eqref{eq}, where  $\mu\equiv1$, $V(x)=-\eps(x)\omega^2$, and $f=\omega^2P$ models a nonlinear polarization in the medium, see \cite{Mederski2015,Stuart91,Stuart93} and the references therein.
	Note that solving \eqref{eqem}, or \eqref{eq} in the monochromatic setting, one obtains the electric displacement field according to the first constitutive relation in \eqref{e-const},	
	the magnetic induction is given by integrating Faraday’s law in time, and the magnetic field is given by the second constitutive relation. Since there are no currents, nor charges, then the Gauss laws are satisfied: $\div(\cD)=\div(\cB)=0$.  Thus, we find the {\em exact propagation} of the electromagnetic field according to the Maxwell equations.
	\\
	\indent
	Another motivation has been provided by Benci and Fortunato \cite{BenFor}, who introduced a model for a unified field theory for classical electrodynamics based on a semilinear perturbation of the Maxwell equations in the spirit of the Born-Infeld theory \cite{BornInfeld}. In the magnetostatic case, in which the electric field vanishes and the magnetic field is independent of time, this  leads to an equation of the form \eqref{eq} with $u$ replaced with $A$, the gauge potential related to the magnetic field.\\
	\indent A major mathematical difficulty of \eqref{eq} and similar curl-curl problems is that the differential operator $u\mapsto\nabla\times\nabla\times u$ has an infinite-dimensional kernel, i.e., the space of gradient vector fields; this makes the associated energy functional
	\begin{equation}\label{eq:action}
		\E(u)=\frac12\int_{\R^3}|\curlop u|^2\,dx +V(x)|u|^2 - \int_{\R^3} F(x,u)\,dx,
	\end{equation}
	where $f=\partial_u F$,
	strongly indefinite, i.e., unbounded from above and below (when $F\ge0$) even on subspaces of finite codimension and such that its critical points have infinite Morse index; for instance, this is the case in a model example
	\begin{equation}\label{eq:modelEx}
		f(x,u)=\Gamma(x)\min\{|u|^{p-2},|u|^{q-2}\}u\quad\hbox{ with }2<p\leq 6\leq q
	\end{equation}
	where $\Gamma\in L^{\infty}(\R^3)$ is $\Z^3$-periodic, positive, and bounded away from $0$. Another issue is that the Fr\'echet differential of the energy functional is not sequentially weak-to-weak* continuous, therefore the limit point of a weakly convergent sequence needs not be a critical point of the functional. Moreover, one has to struggle with the lack of compactness because the problem is set in the whole space $\rr$.\\
	\indent We underline that the aforementioned difficulties in dealing with curl-curl problems  have given rise to several simplifications in the literature. The most widely used is the scalar or vector nonlinear Schr\"odinger equation, where, e.g., one assumes that the term $\nabla(\div u)$ in $\nabla\times\nabla\times u=\nabla(\div u)-\Delta u$ is negligible and can therefore be removed from the equation, or uses the so-called \textit{slowly varying envelope approximation}. Nevertheless, such approximations may produce non-physical solutions, which do not describe the \textit{exact} propagation of electromagnetic waves in Maxwell's equations, as remarked, e.g., in \cite{AAS-C,CCDY}, whence the importance of curl-curl problems from a physical point of view.\\
	\indent  As far as we know, the first papers dealing with exact solutions to Maxwell's equations are \cite{McStTr,Stuart91}, where the electromagnetic wave equation \eqref{eqem} is turned in an ODE and treated with ad hoc techniques. The same approach is used in the series of papers \cite{Stuart93,Stuart04,StuartZhou96,StuartZhou01,StuartZhou03,StuartZhou05,StuartZhou10}.\\ 
	\indent In general, the curl-curl problem \eqref{eq} seems to be difficult to study by considering radial solutions and the consequently associated ODE, since it does not admit any radial solutions for a large class of $V$ and $F$.
  In fact, it follows from \cite[Theorem 1.1]{BDPR:2016} that any $\cO(3)$-equivariant (weak) solution to \eqref{eq} is trivial.\\
	\indent The curl-curl problem \eqref{eq} in $\R^3$ has been solved for the first time by Azzollini, Benci, D'Aprile, and Fortunato in \cite{BenForAzzAprile} in the cylindrically symmetric setting. If $V=0$ and $F(x,u)$ depends only on $|u|$ as in \cite{BenForAzzAprile}, then one can restrict the considerations to
	the fields of the form
	\begin{equation}\label{eq:sym1}
		u(x)=\al(r,x_3)\begin{pmatrix}-x_2\\x_1\\0\end{pmatrix},\qquad r=\sqrt{x_1^2+x_2^2},
	\end{equation}
	which are divergence-free, so $\curlop(\curlop u)=-\Delta u$ and one can study \eqref{eq} by means of standard variational methods (however, there may still exist solutions which are not of this form). Other results in the cylindrically symmetric setting have been obtained in \cite{DAprileSiciliano,BDPR:2016,HirschReichel,Zeng,Mandel,GaczkowskiMS}. Let us mention that the solutions found in \cite{DAprileSiciliano} are orthogonal to the subspace of vector fields of the form \eqref{eq:sym1}. Clearly the cylindrically symmetric approach is not applicable if $V$ or $f$ in \eqref{eq} lack this symmetry or, even when \eqref{eq} does preserve this symmetry, if we look for ground state solutions, i.e., nontrivial solutions with minimal energy. The natural question whether ground state solutions must have some symmetry properties arises, but so far it is an open problem.
\\
	\indent Similar difficulties have appeared also in curl-curl problems on bounded domains \cite{BartschMederski,BartschMederskiJFA}, where  Bartsch and the first author
	investigated a problem similar to \eqref{eq} and paired with boundary conditions that model the case of a medium surrounded by a perfect conductor (i.e., the electric field on the boundary of the medium is tangential to it), obtaining the system
	\begin{equation*}\label{e-bddcurl}
		\begin{cases}
			\nabla\times\nabla\times u+V(x)u=f(x,u) & \, \text{in } \Omega\\
			\nu\times u=0 & \, \text{on } \partial\Omega,
		\end{cases}
	\end{equation*}
	with $\nu\colon\partial\Omega\to\mbS^2$ the outer normal unit vector. As in the linear case, e.g. \cite{BuffaAmmariNed,Leis68,KirschHettlich,Monk},
	or in \cite{BenFor,DAprileSiciliano}, the authors split the function space they work with into a divergence-free part, i.e., $\div (V(x)u)=0$, and a curl-free part, which allows to build a more tractable variational setting of the problem. Then they adopt techniques from \cite{SzulkinWeth} that exploit a generalization of the Nehari manifold, which needs not be of class $\cC^1$ (also known in the literature as the Nehari--Pankov manifold, cf. \cite{Pankov}). However, under some technical assumptions about $F$, the generalized Nehari manifold is proved to be homeomorphic to the unit sphere in the subspace of the divergence-free vector fields. In addition, since such a manifold is a natural constraint, by a suitable minimization argument the authors find a ground state solution, as well as infinitely many solutions with the energy converging to infinity. The advantage of working in a bounded domain is that, despite the presence of the subspace of curl-free vector fields, which does not embed compactly in any Lebesgue (or Orlicz) function space, a variant of the Palais--Smale condition is satisfied, which provides some compactness in the aforementioned minimization argument.
	Other approaches have been developed in subsequent work concerning the the Brezis-Nirenberg problem \cite{BrezisNirenberg} for the curl-curl operator  \cite{MederskiJFA2018,MSz}; see also the survey \cite{BartschMederskiJFTA}.\\ 
	\indent Back to \eqref{eq}, where no variants of the Palais--Smale condition are available because of the infinite measure of $\R^3$, a careful concentration-compactness analysis on a suitable generalized Nehari manifold $\cN_\E$ (see \eqref{DefOfNehari1} for the definition) has been demonstrated in \cite{Mederski2015}, which seems to be the first work on ground state solutions of \eqref{eq} in the nonsymmetric setting with $V\leq 0$ and where \eqref{eq:modelEx} with $2<p<6<q$ is the model in mind. In the current work, we refine results of \cite{Mederski2015}, present the variational approach, and provide a simpler argument resolving the compactness issue inspired by the recent work by Szulkin and the authors \cite{MSchSz}.\\ 
	\indent In \cite{MSchSz}, the multiplicity problem of bound states to \eqref{eq} with $V=0$ has been considered together with a large class of nonlinearities which have supercritical growth at $0$ and subcritical growth at infinity; this is in the spirit of the zero mass case of Berestycki and Lions \cite{BerLions}, see condition (N2) below. However, as shown by the examples below, we admit nonlinearities which are more general than in \eqref{eq:modelEx} with $2<p<6<q$, and this requires a new functional setting for \eqref{eq} as well as a new critical point theory. The reason is that the methods based on the constraint $\cN_{\E}$ cannot be applied straightforwardly here since $\cN_{\E}$ may not be homeomorphic to the unit sphere in the subspace of divergence-free vector fields as in \cite{BartschMederski,Mederski2015}.  Note that although $\cE$ has the classical linking geometry, the well-known linking results, e.g. of Benci and Rabinowitz \cite{BenciRabinowitz}, are not applicable due to the lack of weak-to-weak$^*$ continuity of $\cE'$.\\
	\indent In order to state the main results we assume that
	the growth of  $f$ is controlled by an $N$-function $\Phi:\R\to [0,\infty)$ of class $\cC^1$ such that
	\begin{itemize}
		\item[(N1)] $\Phi$ satisfies the $\Delta_2$- and the $\nabla_2$-condition globally.
	\end{itemize}
	$N$-functions and condition (N1) will be introduced in the next section and are  standard in the theory of Orlicz spaces \cite{RaoRen}. For the subcritical nonlinearities we shall need the following growth conditions:
	\begin{itemize}
		\item[(N2)] $\displaystyle\lim_{t\to 0}\frac{\Phi(t)}{t^6}=\lim_{t\to\infty}\frac{\Phi(t)}{t^6}=0$,
		\item[(N3)] $\displaystyle\lim_{t\to\infty}\frac{\Phi(t)}{t^2}=\infty$,
	\end{itemize}
	where (N2) is inspired by \cite{BerLions} and (N2), (N3) describe supercritical behaviour at $0$ and superquadratic but subcritical at infinity.
	Now we collect our assumptions on the nonli\-nearity $F(x,u)$.
	\begin{itemize}
		\item[(F1)] $F\colon\R^3\times\R^3\to\R$ is differentiable with respect to the second variable $u\in\R^3$ for a.e. $x\in\R^3$ and $f=\pa_uF\colon\R^3\times\R^3\to\R^3$ is a Carath\'eodory function (i.e., measurable in $x\in\R^3$, continuous in $u\in\R^3$ for a.e.\ $x\in\R^3$). Moreover, $f$ is $\Z^3$-periodic in $x$, i.e., $f(x,u)=f(x+y,u)$ for all $u\in \R^3$ and almost all $x\in\R^3$ and $y\in\Z^3$.
		\item[(F2)] There are $c_1$, $c_2>0$ such that
		$$|f(x,u)|\le c_1\Phi'(|u|)\text{ and }F(x,u)\ge c_2\Phi(|u|)$$
		for every $u\in\R^3$ and a.e. $x\in\R^3$, where $\Phi$ satisfies (N1)--(N3).
		\item[(F3)] For every $u\in\R^3$ and a.e. $x\in\R^3$ 
		$$\langle f(x,u),u\rangle\geq 2F(x,u).$$
		\item[(F4)] If $ \langle f(x,u),v\rangle = \langle f(x,v),u\rangle >0$, then
		$\ \displaystyle F(x,u) - F(x,v)
		\le \frac{\langle f(x,u),u\rangle^2-\langle f(x,u),v\rangle^2}{2\langle f(x,u),u\rangle}$.
	\end{itemize}
	
	We say that $F$ is {\em uniformly strictly convex with respect to $u\in\R^3$} if and only if for any compact $A\subset(\R^3\times\R^3)\setminus\{(u,u):\;u\in\R^3\}$
	$$
	\inf_{\genfrac{}{}{0pt}{}{x\in\R^3}{(u_1,u_2)\in A}}
	\left(\frac12\big(F(x,u_1)+F(x,u_2)\big)-F\left(x,\frac{u_1+u_2}{2}\right)\right) > 0.
	$$
	
	We provide some examples. First we note that if $G = G(x,t)\colon \R^3\times\R\to\R$ is differentiable with respect to $t$, $g:=\partial_tG$ is a Carath\'eodory function, $G(x,0)=0$, $M\in GL(3)$ is an invertible $3\times 3$ matrix, and
	\begin{equation} \label{increasing}
		F(x,u) = G(x,|Mu|) \quad  \text{and} \quad t\mapsto g(x,t)/t \text{ is non-decreasing for } t>0,
	\end{equation}
	then $F$ satisfies (F3) (cf. \cite{SzulkinWeth}) and it is easy to see that (F4) holds. Note that \eqref{increasing} implies $g(x,0)=0$, so $f$ is continuous also at $u=0$.\\
	\indent Suppose  $\Ga\in L^\infty(\R^3)$ is $\Z^3$-periodic, positive, and bounded away from $0$. Take 
	$$F(x,u):=\Gamma(x)W(|Mu|^2)$$ where $W$ is a function of class $\cC^1$, $W(0)=W'(0)=0$, and $t\mapsto W'(t)$ is non-decreasing on $(0,\infty)$. Then we check that (F1)--(F4) are satisfied (here $G(x,t)=\Gamma(x)W(t^2)$, so \eqref{increasing} holds). If
	\begin{equation}\label{Expq1}
		W(t^2)=\frac1p\big((1+|t|^q)^{\frac{p}{q}}-1\big)
	\end{equation}
	or
	\begin{equation}\label{Expq2}
		W(t^2)=\min\Big\{\frac1p|t|^p+\frac1q-\frac1p,\frac1q|t|^q\Big\}
	\end{equation}
	with $2<p<6<q$, then we can take $\Phi(t)=W(t^2)$ and we see that (F2) holds as well. Note that
	if $W'(t)$ is constant on some interval $[a,b]\subset (0,\infty)$, then 
	\begin{equation} \label{equality}
		0<\displaystyle F(x,u) - F(x,v)
		=\frac{\langle f(x,u),u\rangle^2-\langle f(x,u),v\rangle^2}{2\langle f(x,u),u\rangle}
	\end{equation}
	for $a<|v|<|u|<b$ and a stronger variant of (F4), i.e., \cite[(F4)]{Mederski2015}, is no longer satisfied, so we cannot apply  variational techniques relying on minimization on the Nehari-Pankov manifold $\cN_{\cE}$ (defined in \eqref{DefOfNehari1}) as in \cite{SzulkinWeth,Mederski2015}. 
	Moreover, we can consider $f$ that cannot be controlled by any $N$-function associated with $L^p(\R^3,\R^3)+L^q(\R^3,\R^3)$ for $2<p<6<q$ as in \cite{Mederski2015}. Indeed, let us consider
	$W(t^2)=\frac12(|t|^2-1)\ln (1+|t|)-\frac14|t|^2+\frac12|t|$ for $|t|\geq 1$, $W(t^2)=\frac{\ln2}q(|t|^q-1)+\frac14$ for $|t|<1$, then 
	\begin{equation*}
		f(x,u)=\begin{cases}
			\Gamma(x)\ln(1+|u|)u  & \text{ if } |u|\geq 1,\\
			\Gamma(x)\ln(2)|u|^{q-2}u & \text{ if } |u|<1
		\end{cases}
	\end{equation*}
	and note that (F1)--(F4) are satisfied.\\
	\indent As our final example, we take $F(x,u)=\Gamma(x) \Phi(|u|)$ where $\Phi(0)=0$, 
	\begin{equation*}
		\Phi'(t) =\begin{cases}
			t^5/(1-\ln t)  & \text{ if } t\le 1,\\
			t & \text{ if } 1\le t\le 2, \\
			at^5/\ln t & \text{ if } t\ge2,
		\end{cases}
	\end{equation*}
	and $a=2^{-4}\ln2$. Obviously, $F$ satisfies \eqref{increasing} and hence (F3) and (F4), and \eqref{equality} holds for $1<|u|<2$. It is easy to see that (F1)--(F2) and (N1)--(N3) hold (to check (N1) it is convenient to use Lemma \ref{AllProp}). Note that $\lim_{t\to 0} \Phi(t)/t^6 = 0$, but $\lim_{t\to 0} \Phi(t)/t^q=\infty$ for any $q>6$; similarly $\Phi(t)/t^6\to 0$ but $\Phi(t)/t^p\to\infty$ as $t\to\infty$ for any $p<6$. Note also that in the last two examples we can replace $|u|$ with $|Mu|$. 
	\\
	\indent Let $S$ be the classical Sobolev constant of the embedding $\cD^{1,2}(\R^3)$ into $L^6(\R^3)$ and $\Psi=\Phi^*$ the complementary function to $\Phi$ (cf. Section \ref{sec:varsetting}). We assume that
	\begin{itemize}
		\item[(V)] $\Phi$ satisfies the $\nabla'$-condition globally, $\Phi\circ\Psi^{-1}$ is convex and satisfies the $\nabla_2$-condition globally, $V\in L^{(\Phi\circ \Psi^{-1})^* \circ \Psi}(\R^3)$, $V(x)<0$ for a.e. $x\in\R^3$, and $|V|_{3/2}<S$.
	\end{itemize}
	Since, as we will see, $N$-functions are even, when we write $\Psi^{-1}$ we mean $(\Psi|_{[0,\infty)})^{-1}$, while the $\nabla'$-condition will be introduced in the next section.
	We remark that $\Phi\circ\Psi^{-1}$ is in fact an $N$-function and that (N2) and $V\in L^{(\Phi\circ \Psi^{-1})^* \circ \Psi}(\R^3)$ imply $V\in L^{3/2}(\R^3)$, see Lemma \ref{lem:V}.\\ 
\indent Let $\cD(\curl,\Phi)$ be the space of functions  $u$ such that $\nabla\times u$ is square integrable and $u$ is in the Orlicz space $L^\Phi(\R^3,\R^3)$; see the next section for a more accurate definition. Then $\mathcal{E}\in \cC^1(\cD(\curl,\Phi), \R)$ and  critical points of $\cE$ are weak solutions to \eqref{eq}.\\
\indent Our first aim is to present the following result.
\begin{Th}\label{ThMain} Assume that (F1)--(F4) hold. Then: \\
	(a) If (V) holds and $F$ is convex with respect to $u\in\R^3$, or $V=0$ and $F$ is uniformly strictly convex with respect to $u\in\R^3$, then equation \eqref{eq} has a ground state solution, i.e., there is a critical point $u\in\mathcal{N}_{\cE}$ of $\E$ such that
	$$\E(u)=\inf_{\mathcal{N}_{\cE}}\E>0,$$
	where
	\begin{eqnarray}\label{DefOfNehari1}
	\mathcal{N}_{\cE} &:=& \{u\in \D(\curl,\Phi): u\neq 0,\; 
	\E'(u)[u]=0,\\\nonumber
	&&\hbox{ and }\E'(u)[\nabla\vp]=0\,\hbox{ for any }\vp\in \cC_0^{\infty}(\R^3)\}.
	\end{eqnarray}
	(b) If $V=0$, $F$ is uniformly strictly convex with respect to $u\in\R^3$, and $F$ is even in $u$,  then there is an infinite sequence $(u_n)\subset\cN_{\cE}$ of geometrically distinct solutions of  \eqref{eq}, i.e., solutions such that $(\Z^3\ast u_n)\cap (\Z^3\ast u_m)=\emptyset$ for $n\neq m$, where
	$$\Z^3\ast u_n:=\{u_n(\cdot+y): y\in\Z^3\}.$$
\end{Th}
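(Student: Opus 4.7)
The plan is to split $\cD(\curl,\Phi) = \cV \oplus \cW$ via a Helmholtz-type decomposition, where $\cV$ collects divergence-free fields and $\cW$ is the closure of $\{\nabla\vp : \vp \in \cC_0^\infty(\R^3)\}$ in the ambient norm, then reduce $\cE$ to a functional $J$ on $\cV$ and minimize over the Nehari-Pankov manifold $\cN_\cE$, handling non-compactness by concentration arguments that exploit the $\Z^3$-invariance of $\cE$. This follows the strategy of \cite{SzulkinWeth} and \cite{Mederski2015}, with adjustments from \cite{MSchSz} needed because the weaker form of (F4) assumed here prevents the usual homeomorphism between $\cN_\cE$ and the unit sphere of $\cV$. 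Since $\curl$ annihilates $\cW$, the curl-term of $\cE$ depends only on the $\cV$-component, and the condition $\cE'(u)[\nabla\vp]=0$ in the definition of $\cN_\cE$ is precisely optimality of $\cE(v+\,\cdot\,)$ in the $\cW$-direction.

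For each $v\in\cV$, the map $w\mapsto\cE(v+w)$ is strictly concave on $\cW$ --- under (V) from $V<0$ combined with convexity of $F$, and under $V=0$ from the uniform strict convexity of $F$ --- and anti-coercive on $\cW$ by (F2) together with (N3). It therefore admits a unique maximizer $\mu(v)$, giving a reduced functional $J(v) := \cE(v+\mu(v))$ of class $\cC^1$, with $\cN_\cE = \{v + \mu(v) : v \in \cV\setminus\{0\},\ J'(v)[v]=0\}$. Mountain-pass-type geometry near $0$, provided by (N2), (F2), and the Orlicz embedding, combined with anti-coercivity of $t\mapsto J(tv)$ at infinity, yields $c:=\inf_{\cN_\cE}\cE > 0$.

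The main obstacle is the lack of compactness on $\R^3$ together with the failure of weak-to-weak$^*$ continuity of $\cE'$. Taking a minimizing sequence $(u_n) = (v_n + \mu(v_n)) \subset \cN_\cE$, I would derive boundedness of $(v_n)$ in the relevant Orlicz-Sobolev norm from (F2), (F3), and the structure of $\cE$, rule out vanishing using $c > 0$ and (F2), and translate $(v_n)$ by suitable $\Z^3$-shifts --- allowed by (F1) --- to extract a weak limit $v_\infty \neq 0$. The delicate step, and where the simpler argument of \cite{MSchSz} is key, is to pass to the limit in $\cE'(u_n)$ and $\cE(u_n)$ despite the absence of weak-to-weak$^*$ continuity; I would invoke a Brezis-Lieb type decomposition in $L^\Phi$ together with pointwise convergence of $f(x,u_n)$ and the convexity of $F$ to identify $u_\infty := v_\infty + \mu(v_\infty)$ as a critical point of $\cE$ lying in $\cN_\cE$ with $\cE(u_\infty) = c$. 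This yields the ground state required in (a).

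For part (b), $\cE$ and $J$ are additionally even in $u$ and invariant under the free $\Z^3$-action $u \mapsto u(\cdot+y)$. I would apply an equivariant Ljusternik-Schnirelmann theory for strongly indefinite even functionals with non-compact group actions, in the spirit of \cite{SzulkinWeth}, to the reduced $J$: an increasing sequence of critical values $c_k$ is obtained as minimax levels over symmetric families of Krasnoselskii genus at least $k$. A $\Z^3$-periodic Palais-Smale-type condition, established by the same concentration argument as above, together with a deformation lemma respecting the $\Z^3$-action, provides at each level $c_k$ a critical point whose $\Z^3$-orbit is distinct from those of previously found solutions, since two critical points in the same orbit could be brought to coincide by translation. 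This yields the required infinite sequence of geometrically distinct solutions.
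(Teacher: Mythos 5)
Your overall strategy (Helmholtz splitting $\cD(\curl,\Phi)=\cV\oplus\cW$, reduction via the unique maximizer $\mu(v)=w(v)$ in the $\cW$-direction, the generalized Nehari set, concentration via $\Z^3$-translations) is the same as the paper's, but there are two genuine gaps. First, you take a \emph{minimizing} sequence $(u_n)\subset\cN_{\cE}$ and then propose to ``pass to the limit in $\cE'(u_n)$'' to conclude that the weak limit is a critical point. For that you need $\cE'(u_n)\to 0$, and nothing in your argument provides it: under the weak form of (F4) assumed here, $\cN_{\cE}$ is not a $\cC^1$ manifold and is not homeomorphic to the unit sphere of $\cV$ (you yourself note this), so the usual Ekeland/Lagrange-multiplier mechanism that upgrades a minimizing sequence on a Nehari-type constraint to a Palais--Smale sequence is unavailable. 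The paper circumvents exactly this point: it verifies (I1)--(I8) for $\cJ$ on $\cV\times\cW$ and invokes the abstract linking result (Theorem \ref{ThLink1}) to produce a \emph{Cerami} sequence for $\cJ\circ m$ at the mountain-pass level $c_{\cM}$, and then uses (I8) (i.e., Proposition \ref{Propuv_N}, which is where (F4) enters) to identify $c_{\cM}=c_{\cN}=\inf_{\cN_{\cE}}\cE$. Only with the Cerami information in hand does the concentration argument, together with the weak-to-weak$^*$ continuity of $\cJ'$ \emph{along sequences in} $\cM$ (Corollary \ref{CorJweaklycont}) and Fatou with (F3), yield a critical point at level $c_{\cN}$.

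Second, your compactness step uses $\Z^3$-translations ``allowed by (F1)'' in both cases, but when (V) holds the potential $V\in L^{(\Phi\circ\Psi^{-1})^*\circ\Psi}(\R^3)$ is \emph{not} $\Z^3$-periodic, so $\cE$ is not translation invariant and the translated weak limit is not a critical point of the original functional; ruling out vanishing cannot be done this way. The paper treats this case by a different device: it first produces the ground state $u_0$ for $V=0$, observes the strict energy comparison $c_{\cN}<c_{\cN_0}$ (using $V<0$ a.e.), and shows that if the (untranslated) Cerami sequence had $v_n\weakto 0$ then $\int_{\R^3}V|m_0(v_n)|^2\,dx\to 0$ (local compactness plus the tail estimate coming from $(\Phi\circ\Psi^{-1})^*\in\Delta_2$), forcing $\cJ(u_n)\ge c_{\cN_0}+o(1)$, a contradiction; hence the weak limit is already nontrivial without translating. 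Your proposal omits this comparison argument entirely. (For part (b) your genus-based sketch is plausible in spirit, but the actual work -- a deformation theory in the $\cT$-topology compatible with the lack of weak-to-weak$^*$ continuity, the condition $(M)_0^\beta$ and the discreteness Lemma \ref{Discreteness} -- is precisely the content of the abstract Theorem \ref{Th:CrticMulti} that the paper invokes; as stated, your outline does not supply these ingredients.)
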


Theorem \ref{ThMain} has been obtained in \cite{MSchSz} in case $V=0$. If $\Phi'(t)=\min\{|t|^{p-2},|t|^{q-2}\}t$ with $2<p<6<q$, then $(V)$ holds provided that $V\in L^{\frac{p}{p-2}}(\R^3)\cap L^{\frac{q}{q-2}}(\R^3)$, $V<0$ a.e. on $\R^3$, and $|V|_{\frac{3}{2}}<S$, therefore Theorem \ref{ThMain} (a) generalizes \cite[Theorem 2.1]{Mederski2015} and we can consider nonlinearities like \eqref{Expq2} and \eqref{Expq1}.\\
\indent The critical problem in $\R^3$, i.e., $V=0$ and $f(x,u)=|u|^4u$, has been investigated by Szulkin and the first author in \cite{MSz}. Recall that if $\Phi(t)=\frac16|t|^6$, then $W^6_0(\curl;\R^3):=\D(\curl,\Phi)$ and $W^6_0(\curl;\R^3)=W^6(\curl;\R^3)$, where
$$
W^6(\curl;\R^3) := \big\{u\in L^6(\R^3,\R^3): \curlop u\in L^2(\R^3,\R^3)\big\}
$$
is endowed with the norm
$
\|u\|_{W^6(\curl;\R^3)} := \left(|u|^2_6+|\curlop u|^2_2\right)^{1/2}
$. Denote the kernel of $\curlop (\cdot )$ in $W^6(\curl;\R^3)$ by
$$\cW:=\{w\in W^6(\curl;\R^3): \curlop w=0\}$$
and let $S_{\curl}$ be the largest possible constant such that the inequality
\begin{equation}\label{eq:neq}
	\int_{\R^3}|\curlop u|^2\, dx\geq S_{\curl}\inf_{w\in \cW}\Big(\int_{\R^3}|u+w|^6\,dx\Big)^{\frac13}
\end{equation}
holds for every $u\in W^6(\curl;\R^3)\setminus \W$. Inequality \eqref{eq:neq} is in fact (trivially) satisfied also for $u\in \cW$ because then both sides are zero. According to \cite{MSz}, we present the following result.

\begin{Th}\label{Th:main1}
$S_\curl> S$, $\inf_{\cN_{\cE}}\cE=\frac13S_{\curl}^{3/2}$, and it is attained by a ground state solution to \eqref{eq} with $V=0$ and $f(x,u)=|u|^4u$.
\end{Th}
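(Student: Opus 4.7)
The plan splits into three stages: evaluate $\inf_{\cN_\cE}\cE$ in terms of $S_\curl$, establish the strict inequality $S_\curl > S$, and conclude attainment.

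For the value, note that with $V=0$ and $f(x,u)=|u|^4 u$ one has $\cE(u)=\tfrac12|\nabla\times u|_2^2-\tfrac16|u|_6^6$. On $\cN_\cE$, the condition $\cE'(u)[u]=0$ gives $|\nabla\times u|_2^2=|u|_6^6$, so $\cE(u)=\tfrac13|\nabla\times u|_2^2$. The condition $\cE'(u)[\nabla\varphi]=0$ for all $\varphi\in\cC_0^\infty(\R^3)$ reads $\int_{\R^3}|u|^4 u\cdot\nabla\varphi\,dx=0$, i.e., $u$ is a critical point of the convex functional $w\mapsto\int_{\R^3}|u+w|^6\,dx$ on $\W$, hence its global minimizer: $|u|_6^6=\inf_{w\in\W}|u+w|_6^6$. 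Inserting into the definition of $S_\curl$ gives $|\nabla\times u|_2^2\ge S_\curl^{3/2}$, so $\cE(u)\ge\tfrac13 S_\curl^{3/2}$. For the matching upper bound, take $u\in W^6(\curl;\R^3)\setminus\W$ nearly minimizing $S_\curl$, pick $w_0\in\W$ minimizing $|u+w|_6^6$ (existence by convexity, weak lower semicontinuity, and coercivity), and rescale $v:=t(u+w_0)$ with $t>0$ chosen so that $v\in\cN_\cE$; a direct computation yields $\cE(v) = \tfrac13\bigl(|\nabla\times u|_2^2/(\inf_{w\in\W}|u+w|_6^2)\bigr)^{3/2}$, which tends to $\tfrac13 S_\curl^{3/2}$.

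For $S_\curl > S$, the non-strict inequality follows by a Helmholtz-type argument: for $u \in W^6(\curl;\R^3)$, decompose $u=u^{df}+u^{cf}$ with $u^{df}$ divergence-free in $\cD^{1,2}(\R^3,\R^3)$ and $u^{cf}\in\W$; then $\nabla\times u=\nabla\times u^{df}$, $|\nabla u^{df}|_2^2=|\nabla\times u^{df}|_2^2$, and taking $w=-u^{cf}$ gives $\inf_{w\in\W}|u+w|_6^6\le|u^{df}|_6^6$, whence the classical Sobolev inequality gives $S_\curl\ge S$. The strict inequality is the main obstacle. I would argue by contradiction: if $S_\curl=S$, a minimizing sequence must, after translation and the scaling $u\mapsto\lambda^{1/2}u(\lambda\cdot)$, concentrate on a vector rescaling $Ue$ of the Aubin--Talenti scalar extremizer $U$, with $e$ a unit vector. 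However $\div(|Ue|^4(Ue))=5U^4\partial_e U\not\equiv 0$, so $Ue$ is not a minimizer of $w\mapsto\int|Ue+w|^6$ on $\W$, whence $\inf_{w\in\W}|Ue+w|_6^6<|Ue|_6^6$. Tracking this strict defect through the concentration analysis forces the limiting value of the $S_\curl$-quotient to exceed $S$, contradicting $S_\curl=S$.

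For attainment, take a minimizing sequence $(u_n)\subset\cN_\cE$ with $\cE(u_n)\to\tfrac13 S_\curl^{3/2}$, normalized to $|\nabla\times u_n|_2^2=|u_n|_6^6=S_\curl^{3/2}$. Using translation and the scaling invariance $u\mapsto\lambda^{1/2}u(\lambda\cdot)$ (both preserving $\cN_\cE$ and $\cE$), choose shifts $y_n$ and dilations $\lambda_n$ via the concentration function of Lions so that $\int_{B(0,1)}|u_n|^6\ge c_0>0$ for some $c_0>0$ independent of $n$. Pass to a weakly convergent subsequence with limit $u_\infty$; the condition $\cE'(u_\infty)[\nabla\varphi]=0$ survives by $L^6_{\mathrm{loc}}$ convergence. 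The strict inequality $S_\curl>S$ forces $u_\infty\ne 0$: otherwise $(u_n)$ would concentrate on a scalar Aubin--Talenti bubble, whose Sobolev-type ratio is at most $S$, conflicting with the ratio $S_\curl$ realized by elements of $\cN_\cE$. A Brezis--Lieb decomposition then shows the $L^6$-mass of $u_n-u_\infty$ vanishes, so $u_n\to u_\infty$ strongly in $L^6$, and $u_\infty\in\cN_\cE$ attains the infimum, yielding the desired ground state.
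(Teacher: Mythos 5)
Your identification of the level, $\inf_{\cN_{\cE}}\cE=\frac13 S_{\curl}^{3/2}$, is correct and essentially complete, and for $S_\curl>S$ you have isolated the right mechanism (the defect $\inf_{w\in\cW}|Ue+w|_6<|Ue|_6$ caused by $\div(U^5e)\not\equiv0$); note, however, that the paper does not prove these two facts at all but quotes them from \cite{MSz}, and your version of the strict inequality is only a sketch: you must still explain why a minimizing sequence for $S_\curl$ concentrates, after rescaling, on a profile of the form $Ue$ (reduce to the divergence-free representative, observe that the curl-quotient dominates the vector Sobolev quotient so the sequence is Sobolev-minimizing, use strong $\cD^{1,2}$-convergence of such sequences after dilation/translation and the characterization of vector extremizers, the invariance of the curl-quotient under $T_{s,y}$, and the $L^6$-Lipschitz continuity of $v\mapsto\inf_{w\in\cW}|v+w|_6$). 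None of this "tracking" is carried out in your text.

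The genuine gaps are in the attainment step, which is precisely the part the paper actually proves. First, you take a mere minimizing sequence on $\cN_{\cE}$ and never explain why its weak limit is a critical point of $\cE$; $\cN_{\cE}$ is not a priori a natural constraint, and the paper circumvents this by producing a Cerami sequence at level $c_{\cN}$ via Proposition \ref{PropDefOfm(u)} (Theorem \ref{ThLink1}) and passing to the limit in $\cE'$. Second, to pass to the limit in $\cE'(u_\infty)[\nabla\vp]=0$, in the nonlinearity, and in Brezis--Lieb you need a.e.\ (or strong local) convergence of the whole field $u_n=v_n+w_n$ including its curl-free part $w_n\in\cW$; weak $L^6$ convergence gives none of this, there is no compact embedding into $L^6_{loc}$ at the critical exponent, and $\cW$ has no local compactness whatsoever. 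The paper's proof hinges on the nontrivial fact, quoted from \cite[Theorem 3.1]{MSz}, that the constraint set $\{u:\div(|u|^4u)=0\}$ embeds locally compactly into $L^p$, $p<6$, combined with Solimini's lemma (Lemma \ref{lem:Solimini}) and the invariance Lemma \ref{isom} to produce a nonzero weak limit of the $\cV$-part; your normalization $\int_{B(0,1)}|u_n|^6\ge c_0$ does not prevent concentration at a single point, so it does not yield $u_\infty\neq0$, and the appeal to $S_\curl>S$ at this stage is an assertion, not an argument. Finally, your claimed strong $L^6$ convergence is both unjustified and unnecessary: the paper concludes with Fatou's lemma applied to $\cJ-\frac12\cJ'[\,\cdot\,]$, obtaining $\cJ(u)\le c_{\cN}$, and then $u\in\cN$ (as a nontrivial solution) gives equality without any strong convergence.
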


Multiple entire solutions in the Sobolev-critical case ($V=0$ and $f(x,u)=|u|^4u$) are obtained, for the first time, by Gaczkowski and the authors in \cite{GaczkowskiMS} combining the symmetry introduced in \cite{BenForAzzAprile} of the form \eqref{eq:sym1} with another introduced by Ding in \cite{Ding}, which restores compactness in the critical case. They also extend rigorously an equivalence result, known for the classical formulations, that relates the weak solutions to \eqref{eq} with the weak solutions to Schr\"odinger equations with singular potentials, see \cite{GaczkowskiMS} for details.\\
\indent 
The paper is organized as follows. In Section \ref{sec:varsetting}, we recall the properties of $N$-functions and Orlicz spaces and we prove some new results required by the presence of the potential $V$ and the assumption $(V)$. In Section \ref{sec:criticaslpoitth}, we recall the critical point theory for strongly indefinite functionals based on  \cite{MSchSz}, which  also solves the problem of multiplicity of bound states. In Section \ref{sec:Jcurlcurl}, we prove some preliminary results about the energy functional for the curl-curl problem, showing the abstract theory from Section \ref{sec:criticaslpoitth} suits this concrete context. Finally, Sections \ref{sec:T1} and \ref{sec:T2} are where we prove Theorems \ref{ThMain} and \ref{Th:main1} respectively.

\section{Preliminaries and variational setting}\label{sec:varsetting}

Here and in the sequel, $|\cdot|_q$ denotes the $L^q$-norm.\\
\indent Now, following \cite{RaoRen}, we recall some basic definitions and results about $N$-functions and Orlicz spaces.
A function $\Phi\colon\R\to[0,\infty)$ is called an {\em $N$-function} if and only if it is even, convex, and satisfies
$$\Phi(t)=0\Leftrightarrow t=0,\quad\lim_{t\to 0}\frac{\Phi(t)}{t}=0,\quad\hbox{and }\lim_{t\to\infty}\frac{\Phi(t)}{t}=\infty.$$
Given an $N$-function $\Phi$, we can associate with it another function $\Phi^*\colon\R\to[0,\infty)$ defined by
\[
\Phi^*(t) := \sup\{s|t|-\Phi(s):s\ge 0\}
\]
which is an $N$-function as well. $\Phi^*$ is called the {\em complementary function} to $\Phi$ while $(\Phi,\Phi^*)$ is called a {\em complementary pair} of $N$-functions. To simplify the notations, we will write $\Psi$ for $\Phi^*$. Note that $\Psi^*=\Phi$.\\
\indent
We also recall from \cite[Section II.3]{RaoRen} that $\Phi$ satisfies the $\Delta_2$-{\em condition globally} (denoted $\Phi\in\Delta_2$) if there exists $K>1$ such that for every $t\in\R$
\[
\Phi(2t)\le K\Phi(t)
\]
while $\Phi$ satisfies the $\nabla_2${\em -condition globally} (denoted $\Phi\in\nabla_2$) if there exists $K'>1$ such that for every $t\in\R$
\[
\Phi(K't)\ge 2K'\Phi(t).
\]
Similarly, $\Phi$ satisfies the $\Delta'${\em -condition globally} (denoted $\Phi\in\Delta'$) if there exists $c>0$ such that for every $s,t\in\R$
\[
\Phi(st) \le c \Phi(s) \Phi(t),
\]
while $\Phi$ satisfies the $\nabla'${\em -condition globally} (denoted $\Phi\in\nabla'$) if there exists $c'>0$ such that for every $s,t\in\R$
\[
\Phi(s) \Phi(t) \le c'\Phi(st).
\]
In order to make the text more fluent, {\em from now on, when we say that the $\Delta_2$-, $\nabla_2$-, $\Delta'$-, or $\nabla'$-condition holds, we mean that it holds globally}.\\
\indent The set 
$$L^\Phi:=L^\Phi(\R^3,\R^3):=\Big\{u\colon\R^3\to\R^3\text{ measurable } : \int_{\R^3}\Phi(\alpha|u|)\,dx<\infty \text{ for some } \alpha>0\Big\}$$ is a vector space and it is called an Orlicz space; if $\Phi\in\Delta_2$, then one can take the equivalent definition
\[
L^\Phi=\Big\{u\colon\R^3\to\R^3\text{ measurable } : \int_{\R^3}\Phi(|u|)\,dx<\infty\Big\}.
\]
Moreover, the space $L^\Phi$ becomes a Banach space (cf. \cite[Theorem III.2.3, Theorem III.3.10]{RaoRen}) if endowed with the norm
$$|u|_{\Phi} := \inf\Big\{k>0 : \int_{\R^3}\Phi\Big(\frac{|u|}{k}\Big) \, dx \le 1\Big\}.$$
We can define an equivalent norm on $L^\Phi$ by letting
$$|u|_{\Phi,1}:=\sup\Big\{\int_{\R^3}|u|\, |u'|\, dx: \int_{\R^3}\Psi(|u'|)\, dx \leq 1,\; u'\in L^{\Psi}\Big\},$$
see \cite[Proposition III.3.4]{RaoRen} (note that in \cite{RaoRen} these results are formulated for the space $\cL^\Phi$; however, no distinction needs to be made between $\cL^\Phi$ and $L^\Phi$, see the comment following  \cite[Corollary III.3.12]{RaoRen}). 
Finally, if both $\Phi$ and $\Psi$ satisfy the $\Delta_2$-condition, then $L^\Phi$ is reflexive and $L^\Psi$ is its dual \cite[Corollary IV.2.9 and Theorem IV.2.10]{RaoRen}. Similarly, for any measurable $\Om\subset\R^3$ one can define $$L^\Phi(\Om):=\Big\{\xi\colon\Om\to\R\text{ measurable and} \int_{\Om}\Phi(\alpha|\xi|)<\infty \text{ for some } \alpha>0\Big\}$$
and endow it with the norm $|\cdot|_\Phi$ defined as above. \\
\indent Recall that $L^\Phi=L^\Phi(\R^3)^3$ can be identified \cite[Lemma 2.1]{MSchSz}.\\
\indent Before going on, for the reader's convenience we recall some important facts.
\begin{Lem}\label{AllProp}$\mbox{}$
\begin{itemize}
\item [(i)] The following are equivalent:
	\begin{itemize}
	\item $\Phi\in\Delta_2$;
	\item there exists $K>1$ such that $t\Phi'(t)\le K\Phi(t)$ for every $t\in\R$;
	\item there exists $K'>1$ such that $t\Psi'(t)\ge K'\Psi(t)$ for every $t\in\R$;
	\item $\Psi\in\nabla_2$.
	\end{itemize}
\item [(ii)] $\Phi\in\nabla'$ if and only if $\Psi\in\Delta'$.
\item [(iii)] For every $u\in L^\Phi$, $u'\in L^\Psi$ there holds
$$\int_{\R^3}|u|\,|u'|\,dx\le\min\{|u|_{\Phi,1}|u'|_{\Psi},|u|_{\Phi}|u'|_{\Psi,1}\}.$$
\item [(iv)] Let $u_n$, $u\in L^\Phi$. Then $|u_n-u|_\Phi\to 0$ implies that $\int_{\R^3}\Phi(|u_n-u|)\,dx\to 0$. If $\Phi\in\Delta_2$, then $\int_{\R^3}\Phi(|u_n-u|)\,dx\to 0$ implies $|u_n-u|_\Phi\to 0$.
\item [(v)] Let $X\subset L^\Phi$ and suppose $\Phi\in\Delta_2$. Then $X$ is bounded if and only if $\{\int_{\R^3}\Phi(|u|)\,dx:E\in X\}$ is bounded.
\end{itemize}
\end{Lem}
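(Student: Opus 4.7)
The plan is to derive everything from standard facts about $N$-functions and Orlicz spaces, essentially all of which are in \cite{RaoRen}; the lemma simply packages what we need in one place.

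For \textbf{(i)} I would prove the chain
$\Phi\in\Delta_2 \Rightarrow t\Phi'(t)\le K\Phi(t) \Rightarrow t\Psi'(t)\ge K'\Psi(t) \Rightarrow \Psi\in\nabla_2 \Rightarrow \Phi\in\Delta_2$.
The first implication uses convexity of $\Phi$ with $\Phi(0)=0$: from the subgradient inequality $\Phi(2t)\ge\Phi(t)+t\Phi'(t)$ for $t>0$ and the $\Delta_2$-bound one extracts $t\Phi'(t)\le(K-1)\Phi(t)$; the converse implication (used below) comes by integrating $\Phi'(t)/\Phi(t)\le K/t$ from $t$ to $2t$. The passage between the differential inequalities for $\Phi$ and $\Psi$ uses the duality $(\Phi')^{-1}=\Psi'$ on $[0,\infty)$ combined with the Legendre identity $s\Phi'(s)=\Phi(s)+\Psi(\Phi'(s))$, which upon the substitution $t=\Phi'(s)$ rearranges into the $\Psi$-version. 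Finally, $t\Psi'(t)\ge K'\Psi(t)$ implies $\Psi\in\nabla_2$ by integration, and the loop closes by the standard duality $\Phi\in\Delta_2\Leftrightarrow\Psi\in\nabla_2$ \cite[Theorem II.3.3]{RaoRen}.

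For \textbf{(ii)}, by the involution $\Psi^*=\Phi$ it suffices to prove one direction. Assuming $\Psi\in\Delta'$ with constant $c$, I would use the definition $\Phi(s)=\sup_{\sigma\ge 0}(s\sigma-\Psi(\sigma))$ together with Young's inequality to bound $\Phi(s)\Phi(t)$ above by a multiple of $\Phi(st)$, the explicit computation being exactly that of \cite[Section II.3]{RaoRen}. For \textbf{(iii)}, the H\"older-type inequality is immediate from the definition of $|\cdot|_{\Phi,1}$: testing against $u'/|u'|_\Psi$ gives the first estimate because $\int_{\R^3}\Psi(|u'|/|u'|_\Psi)\,dx\le 1$ by the definition of the Luxemburg norm, and the symmetric bound is identical with the roles of $\Phi$ and $\Psi$ exchanged; this is essentially \cite[Proposition III.3.4]{RaoRen}.

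For \textbf{(iv)}, if $|u_n-u|_\Phi\to 0$, then for $n$ large one has $|u_n-u|_\Phi\le 1$, and by convexity $\int_{\R^3}\Phi(|u_n-u|)\,dx\le|u_n-u|_\Phi\int_{\R^3}\Phi(|u_n-u|/|u_n-u|_\Phi)\,dx\le|u_n-u|_\Phi\to 0$. The converse needs $\Phi\in\Delta_2$: iterating the bound $\Phi(2t)\le K\Phi(t)$ yields $\Phi(\lambda t)\le C_\lambda\Phi(t)$ for every $\lambda>0$, so if $\int_{\R^3}\Phi(|u_n-u|)\,dx\to 0$ then also $\int_{\R^3}\Phi(|u_n-u|/\lambda)\,dx\to 0$ for every $\lambda>0$, whence $|u_n-u|_\Phi\le\lambda$ eventually. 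Finally, \textbf{(v)} is an immediate corollary: under $\Phi\in\Delta_2$ the same dilation trick shows that $\sup_{u\in X}\int_{\R^3}\Phi(|u|)\,dx<\infty$ is equivalent to $\sup_{u\in X}|u|_\Phi<\infty$. The only mildly nontrivial point is the Legendre duality used in (i); everything else reduces to convexity, Young's inequality, and the definition of the Luxemburg norm, so I expect no serious obstacles.
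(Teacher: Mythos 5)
Your write-up is essentially sound and uses the same standard Orlicz-space toolkit that the cited literature does; note that the paper itself does not reprove anything here --- it disposes of (ii) by citing \cite[Theorem II.3.11]{RaoRen} and of (i), (iii)--(v) by citing \cite[Lemma 2.2]{MSchSz} --- so your contribution is a self-contained version of those standard arguments rather than a different method. Your chain for (i) (convexity gives $t\Phi'(t)\le(K-1)\Phi(t)$ from $\Delta_2$; the Young equality $s\Phi'(s)=\Phi(s)+\Psi(\Phi'(s))$ transfers this to $t\Psi'(t)\ge \frac{K}{K-1}\Psi(t)$; integration of $\Psi'/\Psi\ge K'/t$ gives $\nabla_2$; the known duality closes the loop) is the classical proof, and your arguments for (iii), (iv), (v) are correct: the mixed H\"older estimates follow from testing with $u'/|u'|_\Psi$ resp. $u/|u|_\Phi$ together with the fact that the modular at the Luxemburg norm is $\le 1$, and the modular/norm comparisons follow from the scaling $\Phi(\lambda t)\le\lambda\Phi(t)$ for $\lambda\le1$ plus $\Delta_2$-iteration (in fact, in (v) the implication ``modular bounded $\Rightarrow$ norm bounded'' needs only convexity). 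The only technical point worth a sentence in (i) is that $\Phi'$ need not be strictly increasing (the paper explicitly allows plateaus), so the substitution $t=\Phi'(s)$, $s=\Psi'(t)$ should be phrased via one-sided derivatives of the convex function $\Psi$; this is routine.

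The one genuine slip is in (ii): the claim that ``by the involution $\Psi^*=\Phi$ it suffices to prove one direction'' is not correct. The two implications are, for a general $N$-function $G$, (A) $G\in\Delta'\Rightarrow G^*\in\nabla'$ and (B) $G\in\nabla'\Rightarrow G^*\in\Delta'$; applying (A) with $G=\Psi$ gives ``$\Psi\in\Delta'\Rightarrow\Phi\in\nabla'$'', but applying (A) with $G=\Phi$ only gives ``$\Phi\in\Delta'\Rightarrow\Psi\in\nabla'$'', which is irrelevant, and the direction you actually still need, ``$\Phi\in\nabla'\Rightarrow\Psi\in\Delta'$'', is statement (B) and does not follow from (A) by swapping $\Phi$ and $\Psi$. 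So either prove both implications (the computation for (B) is analogous but separate, estimating $\Psi(st)=\sup_{w\ge0}\bigl(stw-\Phi(w)\bigr)$ against $\Psi(s)\Psi(t)$ using the $\nabla'$ bound), or simply invoke the full equivalence as the paper does, namely \cite[Theorem II.3.11]{RaoRen}. Since you in any case defer the explicit computation to \cite{RaoRen}, the cleanest fix is to cite that theorem for the equivalence rather than argue by involution.
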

\begin{proof}
Point (ii) is due to \cite[Theorem II.3.11]{RaoRen}, while the remaining ones have been proved in \cite[Lemma 2.2]{MSchSz}.
\end{proof}

Now we prove a preliminary result that we will use in some of the next lemmas.

\begin{Lem}\label{lem:conj}
Let $1<p<\infty$ and define $p':=\frac{p}{p-1}$. Let $\Phi$ be any $N$-function and define $\Psi:=\Phi^*$. Then
\[
\lim_{t\to 0}\frac{\Phi(t)}{|t|^p} = 0 \Leftrightarrow \lim_{t\to 0}\frac{\Psi(t)}{|t|^{p'}} = \infty \quad \text{and} \quad \lim_{t\to \infty}\frac{\Phi(t)}{|t|^p} = 0 \Leftrightarrow \lim_{t\to \infty}\frac{\Psi(t)}{|t|^{p'}} = \infty.
\]
\end{Lem}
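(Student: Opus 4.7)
The plan is to exploit the Legendre formula $\Psi(t)=\sup_{s\ge 0}(st-\Phi(s))$ together with its dual $\Phi(s)=\sup_{t\ge 0}(st-\Psi(t))$ (which holds because $\Psi^{*}=\Phi$). I would treat the limits at $0$ in detail; the limits at $\infty$ follow by the same scheme with ``small'' and ``large'' swapped throughout. In each direction the idea is to turn a one-sided local comparison of $\Phi$ (resp.\ $\Psi$) with a pure power into a matching bound for its Legendre conjugate in a suitable neighborhood, by plugging a well-chosen trial value into the sup.

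For the implication $\Phi(t)/|t|^p\to 0\Rightarrow \Psi(t)/|t|^{p'}\to\infty$, I would fix $\eps\in(0,1)$, choose $\delta>0$ with $\Phi(s)\le \eps s^p$ on $[0,\delta]$, and insert the trial value $s(t):=(t/(\eps p))^{1/(p-1)}$ (the unconstrained maximizer of $s t-\eps s^p$) into the Legendre sup for $\Psi$. For $t$ small enough that $s(t)\le\delta$, a short elementary calculation yields
\[
\Psi(t)\ge s(t)\,t-\eps\,s(t)^p=\frac{p-1}{p^{p'}}\,\eps^{1-p'}\,t^{p'}.
\]
Since $1-p'<0$, letting $\eps\to 0^+$ blows up the prefactor, which is exactly what is needed.

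For the converse $\Psi(t)/|t|^{p'}\to\infty\Rightarrow \Phi(t)/|t|^p\to 0$, I would fix $M>0$, choose $\eta>0$ with $\Psi(\tau)\ge M\tau^{p'}$ on $[0,\eta]$, and attempt to restrict the Legendre sup for $\Phi(s)=\sup_{\tau\ge 0}(s\tau-\Psi(\tau))$ to the interval $[0,\eta]$ on which the hypothesis is usable. Convexity of $\Psi$ together with $\Psi(\eta)>0$ produces $\sup_{\tau>\eta}(s\tau-\Psi(\tau))\le s\eta-\Psi(\eta)<0$ for all sufficiently small $s$, so
\[
\Phi(s)\le\sup_{\tau\in[0,\eta]}(s\tau-M\tau^{p'})\le\sup_{\tau\ge 0}(s\tau-M\tau^{p'})=\frac{s^p}{p\,M^{p-1}(p')^{p-1}},
\]
and letting $M\to\infty$ closes the argument.

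The main obstacle I anticipate is precisely this restriction-of-the-sup step in the converse direction: since the hypothesis on $\Psi$ is only local, one must carefully show that the tail of the Legendre sup does not spoil the bound. Convexity and the strict positivity of $\Psi(\eta)$ handle this cleanly near $0$; at $\infty$ the analogue excludes the complementary tail $\tau<R$ via the trivial estimate $\sup_{\tau<R}(s\tau-\Psi(\tau))\le sR = o(s^p)$ as $s\to\infty$, which is automatic since $p>1$. Once these tail bounds are in place, the two explicit Young-type computations above deliver both implications in the limit at $0$ and, mutatis mutandis, at $\infty$.
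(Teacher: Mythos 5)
Your argument is correct and follows essentially the same route as the paper: both directions are obtained from the Legendre formula $\Psi(t)=\sup_{s\ge0}(st-\Phi(s))$ (and $\Phi=\Psi^*$) by inserting the explicit maximizer of the associated power-function problem and checking it lies in the region where the local hypothesis applies, with the same constants appearing. The only cosmetic difference is in the converse direction, where you discard the tail of the sup via convexity of $\Psi$ and $\Psi(\eta)>0$ (resp.\ the bound $s\tau\le sR=o(s^p)$ at infinity), while the paper uses the superlinear growth $\Psi(s)/s\to\infty$ for the same purpose; both tail estimates are valid.
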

\begin{proof}
Suppose that $\lim_{t\to 0}\frac{\Phi(t)}{|t|^p} = 0$. For $\eps>0$, there exists $\delta>0$ such that $\Phi(t)/|t|^p \le \eps$ if $|t|\le\delta$. We have
\[
\frac{\Psi(t)}{|t|^{p'}} \ge \sup\left\{ \frac{s}{|t|^{p'-1}} - \frac{\Phi(s)}{|t|^{p'}} : s\in[0,\delta] \right\} \ge \frac{1}{|t|^{p'-1}} \sup\left\{ s - \eps\frac{s^p}{|t|} : s\in[0,\delta] \right\}
\]
and the maximizer of 
$s - \eps s^p/|t|$ when $s\ge0$ is $\bar s=(t/p\eps)^{p'-1}$. Since we are considering the limit as $t\to0$, we can take $t$ so small that $\bar s\in[0,\delta]$ and so
\[
\frac{\Psi(t)}{|t|^{p'}} \ge \left(\frac{1}{p^{p'-1}} - \frac{1}{p^{p'}}\right)\frac{1}{\eps^{p'-1}} \quad \text{if } t \ll 1,
\]
hence $\lim_{t\to 0}\Psi(t)/|t|^{p'} =\infty$. We argue similarly if $\lim_{t\to \infty}\frac{\Phi(t)}{|t|^p} = 0$.\\
\indent Suppose that $\lim_{t\to 0}\frac{\Psi(t)}{|t|^{p'}} = \infty$. For $\eps>0$ there exist $M>\delta>0$ such that $\Psi(t)/|t|^{p'} > 1/\eps$ if $|t| < \delta$ and $\Psi(t)/|t| > 1/\eps$ if $|t| > M$. Observe that for $|t| \ll 1$
\[
\sup\left\{ s - \frac{\Psi(s)}{|t|} : s>M \right\} \le  \sup\left\{ s\left(1-\frac{1}{|t|\eps}\right) : s\ge0 \right\} = 0
\]
and
we have
\begin{eqnarray*}
\frac{\Phi(t)}{|t|^{p}} &\leq&  
\frac{1}{|t|^{p-1}} \sup\left\{ s - \frac{\Psi(s)}{|t|} : s\in[0,\delta) \right\} \le \frac{1}{|t|^{p-1}} \sup\left\{ s - \frac{s^{p'}}{\eps|t|} : s\ge0 \right\}\\
&=& \left(\frac{1}{(p')^{p-1}} - \frac{1}{({p'})^{p}}\right)\eps^{p-1}.
\end{eqnarray*}
There follows that $\lim_{t\to 0}\Phi(t)/|t|^p=0$. We similarly argue that $\lim_{t\to \infty}\frac{\Psi(t)}{|t|^{p'}} = \infty$ implies $\lim_{t\to \infty}\Phi(t)/|t|^p=0$.
\end{proof}

From now on we, assume (F1)--(F4), (N1), (N3), $\Phi$ will denote an $N$-function as in (F2) and $\Psi$ will denote its complementary function. We assume also
	\begin{itemize}
	\item[(N2')] There exists $c>0$ such that $\Phi(t)\leq c |t|^6$ for every $t\in\R$.
\end{itemize}
(N2) will be assumed only if it is required, so that some of the results below are valid also for the critical case $\Phi(t)=\frac16|t|^6$. Moreover, we will denote by $|\cdot|_\Phi$ any of the two (equivalent) norms defined above, unless differently required.\\
\indent Let $\D(\curl,\Phi)$ be the completion of $\mathcal{C}_0^{\infty}(\R^3,\R^3)$ with respect to the norm
$$\|u\|_{\curl,\Phi}:=\big(|\curlop u|_2^2+|u|_{\Phi}^2\big)^{1/2}.$$
 The subspace of divergence-free vector fields is defined by
 \[
 \begin{aligned}
 \V
 &:= \left\{v\in \D(\curl,\Phi):\; \int_{\R^3}\langle v,\nabla \vp\rangle\,dx=0
 \text{ for any }\vp\in \cC^\infty_0(\R^3)\right\}\\
 &= \{v\in \D(\curl,\Phi):\; \div v=0\}
 \end{aligned}
 \]
 where $\div v$ is to be understood in the distributional sense. Let $\cD:=\D^{1,2}(\R^3,\R^3)$
 be the completion of $\cC^{\infty}_0(\R^3,\R^3)$ with respect to the norm 
 $$\|u\|_{\D}:=|\nabla u|_2,$$
 and let $\W$ be the closure of
 $\big\{\nabla\vp: \vp\in \cC^{\infty}_0(\R^3)\big\}$ 
 in $L^{\Phi}$. In view of (N2') and  by Lemma \ref{AllProp} (iii),
 $L^6(\R^3,\R^3)$ is continuously embedded in $L^{\Phi}$.\\
\indent  
The following Helmholtz decomposition has been obtained in \cite{MSchSz} provided that (N2) holds, however the proof is valid for (N2'), cf. \cite{MSz}.
\begin{Lem}\label{defof} $\V$ and $\cW$ are closed subspaces of $\D(\curl,\Phi)$  and
	\begin{equation*}
	\D(\curl,\Phi)=\V\oplus \W.
	\end{equation*}
	Moreover, $\cV\subset\cD$ and the norms $\|\cdot\|_{\cD}$ and $\|\cdot\|_{curl,\Phi}$ are equivalent in $\V$.
\end{Lem}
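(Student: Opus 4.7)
The plan is to verify the four assertions in order: closedness of $\V$, the realization of $\cW$ as a closed subspace of $\D(\curl,\Phi)$, the direct-sum decomposition $\D(\curl,\Phi)=\V\oplus\cW$, and the embedding $\V\subset\cD$ together with the norm equivalence on $\V$.

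First I would observe that $\V$ is closed because it is the intersection, over $\vp\in\cC_0^\infty(\R^3)$, of the kernels of the linear functionals $u\mapsto\int_{\R^3}\langle u,\nabla\vp\rangle\,dx$; each such functional is continuous on $\D(\curl,\Phi)$ by the H\"older-type inequality for Orlicz spaces (Lemma \ref{AllProp}(iii)), since $\nabla\vp\in L^\Psi$. For $\cW$, I would note that each $\nabla\vp$ with $\vp\in\cC_0^\infty$ lies in $\D(\curl,\Phi)$ and satisfies $\curl\nabla\vp=0$, so on the set of smooth gradients the norms $|\cdot|_\Phi$ and $\|\cdot\|_{\curl,\Phi}$ coincide. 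Hence the $L^\Phi$-closure of smooth gradients is automatically complete as a subspace of $\D(\curl,\Phi)$ and yields a closed subspace consisting of distributionally curl-free elements.

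For the decomposition, given $u\in\D(\curl,\Phi)$ I would employ a Biot-Savart-type construction. Since $\curl u\in L^2(\R^3,\R^3)$, set $v:=\curl(-\Delta)^{-1}\curl u$, so that $\nabla v\in L^2$ by the Calder\'on-Zygmund bounds for Riesz transforms, $\div v=0$, and $\curl v=\curl u$ (using the identity $-\Delta=\curl\curl-\nabla\div$ together with $\div\curl=0$). By Sobolev, $v\in\cD\hookrightarrow L^6\hookrightarrow L^\Phi$ thanks to (N2'), so $v\in\V$; setting $w:=u-v$ gives $\curl w=0$. To identify $w$ as an element of $\cW$, I approximate $u$ by $u_n\in\cC_0^\infty(\R^3,\R^3)$ in $\D(\curl,\Phi)$, apply the same splitting $u_n=v_n+w_n$ with $w_n=\nabla\phi_n$ the classical Helmholtz gradient, use boundedness of the Helmholtz projectors on $L^\Phi$ (available since $\Phi\in\Delta_2\cap\nabla_2$) to pass to the limit $v_n\to v$ and $w_n\to w$ in $L^\Phi$, and then approximate each $\nabla\phi_n$ by compactly supported smooth gradients via truncation and mollification, exploiting $\Phi\in\Delta_2$ to control Orlicz norms under the cut-offs.

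For the directness $\V\cap\cW=\{0\}$, if $u\in\V\cap\cW$ then $\div u=0$ and $\curl u=0$ distributionally, so each component of $u$ is weakly harmonic, and combined with $u\in L^\Phi\subset\cS'(\R^3)$ a Liouville-type argument (or a direct Fourier computation) forces $u=0$. Finally, for $\V\subset\cD$ and the norm equivalence on $\V$: since $\div v=0$, the distributional identity $-\Delta v=\curl\curl v$ combined with Riesz-transform bounds on $L^2$ gives $|\nabla v|_2\leq C|\curl v|_2$, while Sobolev together with (N2') yields $|v|_\Phi\lesssim|\nabla v|_2$, closing the loop with the trivial reverse estimate $|\curl v|_2\leq C|\nabla v|_2$. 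The main obstacle is the step promoting the curl-free $w\in L^\Phi$ from a mere distributional gradient to a genuine element of the smooth-gradient closure $\cW$; this relies on Calder\'on-Zygmund boundedness of the Helmholtz projector on $L^\Phi$, which in turn uses the full strength of condition (N1), together with an Orlicz density argument to replace the limiting gradient by compactly supported smooth ones.
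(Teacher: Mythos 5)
Your argument is correct in outline, and it is in essence the same Helmholtz-decomposition strategy as the proof the paper relies on (note the paper does not prove Lemma \ref{defof} itself but refers to \cite{MSchSz}, with \cite{MSz} for the weakening of (N2) to (N2')): closedness via continuous functionals, the Liouville argument for $\V\cap\W=\{0\}$, and the norm equivalence from $|\nabla v|_2=|\curl v|_2$ for divergence-free fields plus Sobolev and (N2') are exactly the expected ingredients. Where you genuinely diverge is the limit passage: you invoke boundedness of the Helmholtz/Leray projector on $L^\Phi$. This is true under (N1) (global $\Delta_2\cap\nabla_2$ yields nontrivial Boyd indices, so Calder\'on--Zygmund operators are bounded on $L^\Phi$), but it is a nontrivial external theorem that would need a precise citation. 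The cited proof gets by with only $L^2$ theory: for $u_n\in\cC_0^\infty$, the divergence-free part $v_n=\curl(-\Delta)^{-1}\curl u_n$ satisfies $|\nabla v_n|_2=|\curl u_n|_2$ by Plancherel and $|v_n|_\Phi\lesssim|v_n|_6\lesssim|\nabla v_n|_2$ by Sobolev and (N2'), so both $v_n$ and $w_n=u_n-v_n$ are Cauchy whenever $(u_n)$ is Cauchy in $\|\cdot\|_{\curl,\Phi}$, with no Orlicz-space singular-integral bounds. Two steps you should make explicit: the cut-off estimate showing $\nabla\phi_n\in\W$ (the Newtonian potential $\phi_n$ of the mean-zero, compactly supported $\div u_n$ decays like $|x|^{-2}$, so the error from $\phi_n\nabla\chi_R$ is of order $R^3\Phi(CR^{-3})\to0$ by (N2'), and $\Delta_2$ upgrades modular to norm convergence), and the fact that the abstract completion $\D(\curl,\Phi)$ injects into $L^\Phi$, which underlies your claim that the $L^\Phi$-closure of smooth gradients is a closed subspace of $\D(\curl,\Phi)$.
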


Observe that in view of Lemma \ref{defof}, $\cV$ is continuously embedded in $L^\Phi$.\\
\indent We introduce a norm in $\V\times\W$ by the formula
\[
\|(v,w)\|:=\bigl(\|v\|_{\cD}^2+|w|_{\Phi}^2\bigr)^{\frac{1}{2}}
\]
and consider the energy functional defined by \eqref{eq:action} on $\cD(\curl,\Phi)$,
and 
\begin{equation}\label{eqJ}
\J(v,w):=\frac{1}{2}\int_{\R^3}|\nabla v|^2+V(x)|v+w|^2\,dx-\int_{\R^3}F(x,v+w)\,dx.
\end{equation}
defined on $\V\times\W$.\\
\indent 
The next lemma justifies some requirements in the condition (V).

\begin{Lem}\label{lem:V}
Assume $\Phi\circ\Psi^{-1}$ is convex.
\begin{itemize}
	\item [(i)] $\Phi\circ\Psi^{-1}$ is an $N$-function that satisfies the $\Delta_2$-condition.
	\item [(ii)] If $V\in L^{(\Phi\circ \Psi^{-1})^* \circ \Psi}(\R^3)$, then  $V\in L^{3/2}(\R^3)$.
	\item [(iii)] If, moreover, $\Phi$ satisfies the $\nabla'$-condition, then $\|Vu\|_\Psi\to0$ as $\|u\|_\Phi\to0$ and $\|Vu\|_\Psi$ is bounded if $\|u\|_\Phi$ is.
\end{itemize}
\end{Lem}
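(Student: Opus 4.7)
The plan is to establish the three parts in sequence, each reducing to a pointwise inequality on $N$-functions obtainable from the hypotheses (N2'), (N3), (V), and Fenchel--Young duality.

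For part (i), convexity of $\chi:=\Phi\circ\Psi^{-1}$ is assumed, and the remaining $N$-function properties follow via the change of variables $s = \Psi^{-1}(t)$, so that $\chi(t)/t = \Phi(s)/\Psi(s)$. To control the limit at $0$, I would first establish the pointwise lower bound $\Psi(t)\ge c'|t|^{6/5}$ for all $t\in\R$, which combined with (N2') yields $\Phi(s)/\Psi(s)\lesssim |s|^{24/5}\to 0$. I would obtain this lower bound directly from Fenchel--Young: $\Psi(t)\ge\sup_{s\ge 0}(s|t|-cs^6)$, where the supremum is explicitly computable as $C|t|^{6/5}$; this bypasses Lemma \ref{lem:conj}, whose hypothesis ``$\Phi(t)/t^6\to0$'' is unavailable under (N2'). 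The limit at infinity follows from (N3) together with the dual form of Lemma \ref{lem:conj}, giving $\Psi(s)/s^2\to 0$ as $s\to\infty$. For the $\Delta_2$-condition I would verify $t\chi'(t)\le K\chi(t)$ via Lemma \ref{AllProp}(i): the chain rule yields
\[
\frac{t\chi'(t)}{\chi(t)} \;=\; \frac{\Psi(s)\,\Phi'(s)}{\Psi'(s)\,\Phi(s)} \;\le\; \frac{K_\Phi}{K'_\Psi},
\]
using $s\Phi'(s)\le K_\Phi\Phi(s)$ (from $\Phi\in\Delta_2$) and $s\Psi'(s)\ge K'_\Psi\Psi(s)$ (from $\Psi\in\nabla_2$), both consequences of (N1) through Lemma \ref{AllProp}(i).

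For part (ii), on the infinite-measure space $\R^3$ the embedding $L^{\chi^*\circ\Psi}(\R^3)\hookrightarrow L^{3/2}(\R^3)$ is equivalent to the pointwise bound $(\chi^*\circ\Psi)(t)\ge C|t|^{3/2}$. From (N2) and Lemma \ref{lem:conj}, together with continuity, I would extract $\Phi(t)\le c|t|^6$ and $\Psi(t)\ge c'|t|^{6/5}$ globally. Hence $\chi(s)=\Phi(\Psi^{-1}(s))\le Cs^5$, and taking the Legendre transform (optimizing $su-Cs^5$ over $s\ge 0$) gives $\chi^*(u)\ge C'u^{5/4}$. Composing with $\Psi$ then yields $\chi^*(\Psi(t))\ge C'' |t|^{3/2}$, as required.

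For part (iii), the idea is a single application of the generalized H\"older inequality. Since $\Phi\in\nabla'$, Lemma \ref{AllProp}(ii) gives $\Psi\in\Delta'$, so $\Psi(|Vu|)\le c\,\Psi(|V|)\,\Psi(|u|)$ pointwise. The complementary pair $(\chi,\chi^*)$ from (i) is tailored for exactly this situation: Lemma \ref{AllProp}(iii), applied with $(\chi,\chi^*)$, gives
\[
\int_{\R^3}\Psi(|V|)\,\Psi(|u|)\,dx \;\le\; |\Psi(|u|)|_\chi\,|\Psi(|V|)|_{\chi^*,1},
\]
with the second factor finite by $V\in L^{\chi^*\circ\Psi}$. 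The crucial identity $\chi\circ\Psi=\Phi$ reduces $\int\chi(\Psi(|u|))\,dx$ to $\int\Phi(|u|)\,dx$, and Lemma \ref{AllProp}(iv)--(v), applied in turn to $\Phi\in\Delta_2$, $\chi\in\Delta_2$, and finally $\Psi\in\Delta_2$, converts between norms and modulars on both sides, delivering the stated vanishing and boundedness of $|Vu|_\Psi$ in terms of $|u|_\Phi$. The main obstacle I expect is in part (i): under the standing hypothesis (N2') we only have an upper bound, not a limit, so Lemma \ref{lem:conj} cannot be quoted directly; the explicit Fenchel--Young optimization replaces it and yields the unconditional lower bound $\Psi(t)\ge c'|t|^{6/5}$. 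Once this bound is in hand, the remaining arguments are largely bookkeeping governed by $\chi\circ\Psi=\Phi$ and the $\Delta_2/\nabla_2$-consequences of (N1).
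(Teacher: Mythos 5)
Your argument is correct and essentially coincides with the paper's own proof: the same composition identities $\chi(t)/t=\Phi(s)/\Psi(s)$ (with $s=\Psi^{-1}(t)$) and $\chi\circ\Psi=\Phi$, the same derivative criterion from Lemma \ref{AllProp}(i) for the $\Delta_2$-condition, the same pointwise chain $\Psi(t)\gtrsim|t|^{6/5}$, $\chi(t)\lesssim|t|^{5}$, $\chi^*(t)\gtrsim|t|^{5/4}$ leading to $(\chi^*\circ\Psi)(t)\gtrsim|t|^{3/2}$ in (ii), and the same $\Delta'$-plus-H\"older-plus-modular bookkeeping with the pair $(\chi,\chi^*)$ in (iii). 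The only differences are minor: you derive the intermediate bounds by explicit Fenchel--Young optimization rather than via Lemma \ref{lem:conj} (a reasonable and if anything cleaner choice, since (N2') is an inequality rather than a limit), and in (iii) you should carry the scaling constant $\alpha$ from the definition of $V\in L^{(\Phi\circ\Psi^{-1})^*\circ\Psi}(\R^3)$, as the paper does, i.e.\ work with $\Psi(\alpha|Vu|)\le c\,\Psi(\alpha|V|)\,\Psi(|u|)$.
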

\begin{proof}
\textit{(i)} Clearly $\Upsilon:=\Phi\circ\Psi^{-1}$ is even and convex, so now we prove that $\lim_{t\to 0}\Upsilon(t)/t=0$ and $\lim_{t\to\infty}\Upsilon(t)/t=\infty$. From (N2'), (N3), and Lemma \ref{lem:conj}
\[
\liminf_{t\to 0}\frac{\Psi(t)}{|t|^{6/5}}>0 \quad \text{and} \quad \lim_{t\to\infty}\frac{\Psi(t)}{t^2}=0,
\]
which in turn implies
\[
\limsup_{t\to 0}\frac{\Psi^{-1}(t)}{|t|^{5/6}}<\infty \quad \text{and} \quad \lim_{t\to\infty}\frac{\Psi^{-1}(t)}{\sqrt{t}} = \infty
\]
and so
\[
\lim_{t\to 0}\frac{\Upsilon(t)}{t} = \lim_{t\to 0} \frac{\Phi\bigl(\Psi^{-1}(t)\bigr)}{\bigl(\Psi^{-1}(t)\bigr)^6} \frac{\bigl(\Psi^{-1}(t)\bigr)^6}{t} = 0 \quad \text{and} \quad \lim_{t\to\infty}\frac{\Upsilon(t)}{t} = \lim_{t\to 0} \frac{\Phi\bigl(\Psi^{-1}(t)\bigr)}{\bigl(\Psi^{-1}(t)\bigr)^2} \frac{\bigl(\Psi^{-1}(t)\bigr)^2}{t} = \infty.
\]
We exploit Lemma \ref{AllProp} (i) to prove that $\Upsilon\in\Delta_2$. Since $\Phi\in\Delta_2$, for a.e. $t\in\R$
\[
t\Upsilon'(t) = \frac{t\Phi'\bigl(\Psi^{-1}(t)\bigr)}{\Psi'\bigl(\Psi^{-1}(t)\bigr)} = \frac{t\Psi^{-1}(t)\Phi'\bigl(\Psi^{-1}(t)\bigr)}{\Psi^{-1}(t)\Psi'\bigl(\Psi^{-1}(t)\bigr)} \le C\frac{t\Phi\bigl(\Psi^{-1}(t)\bigr)}{\Psi\bigl(\Psi^{-1}(t)\bigr)} = C\Upsilon(t)
\]
where $C>0$ only depends on $\Phi$ and can be supposed to be greater than $1$.\\
\indent
\textit{(ii)} From (N2') and Lemma \ref{lem:conj} we have
$$
\liminf_{t\to 0}\frac{\Psi(t)}{t^{6/5}} > 0 \quad \text{and} \quad \liminf_{t\to \infty}\frac{\Psi(t)}{t^{6/5}} > 0,
$$
hence
$$
\limsup_{t\to0} \frac{\Psi^{-1}(t)}{t^{5/6}} < \infty \quad \text{and} \quad \limsup_{t\to\infty} \frac{\Psi^{-1}(t)}{t^{5/6}} < \infty.
$$
This, again with (N2'), yields
$$
\limsup_{t\to0} \frac{\Upsilon(t)}{t^5} < \infty \quad \text{and} \quad \limsup_{t\to\infty} \frac{\Upsilon(t)}{t^5} < \infty,
$$
and so, still via Lemma \ref{lem:conj},
$$
\liminf_{t\to 0}\frac{\Upsilon^*(t)}{t^{5/4}} > 0 \quad \text{and} \quad \liminf_{t\to \infty}\frac{\Upsilon^*(t)}{t^{5/4}} > 0,
$$
hence there exist $C_1,C_2>0$ such that for all $t\in\R$
\[
\Psi^{-1}(t) \le C_1 |t|^{5/6} \quad \text{and} \quad |t|^{5/4} \le C_2 \Upsilon^*(t).
\]
Then
\[\begin{split}
\int_{\R^3} |V(x)|^{3/2} \, dx & = \alpha^{-3/2} \int_{\R^3} \big|\Psi^{-1}\bigl(\Psi\bigl(\alpha V(x)\bigr)\bigr)\big|^{3/2} \, dx \le (C_1/\alpha)^{3/2} \int_{\R^3} |\Psi\bigl(\alpha V(x)\bigr)|^{5/4} \, dx\\
& = (C_1/\alpha)^{3/2} C_2\int_{\R^3} \Upsilon^*\bigl(\Psi\bigl(\alpha V(x)\bigr)\bigr) \, dx < \infty,
\end{split}\]
where $\alpha$ is from the condition $V\in L^{\Upsilon^* \circ \Psi}(\R^3)$.\\
\indent
\textit{(iii)} In view of Lemma \ref{AllProp} it suffices to prove that $\int_{\R^3}\Psi(\alpha|Vu|)\,dx\to0$ as $\|u\|_\Phi\to0$ and that $\int_{\R^3}\Psi(\alpha|Vu|)\,dx$ is bounded if $\|u\|_\Phi$ is, where $\alpha$ is as before. If $\bar c>0$ is the constant associated with the $\Delta'$-condition of $\Psi$, then
\[
\int_{\R^3}\Psi(\alpha|Vu|)\,dx \le \bar{c}\int_{\R^3} \Psi(\alpha V) \Psi(|u|) \, dx \le \bar{c} \|\Psi(\alpha V)\|_{\Upsilon^*,1} \|\Psi(|u|)\|_\Upsilon
\]
and $\|\Psi(\alpha V)\|_{\Upsilon^*,1}<\infty$ because $\int_{\R^3} \Upsilon^*(\Psi(\alpha V)) \, dx < \infty$, hence we only need to prove that $\|\Psi(|u|)\|_\Upsilon\to0$ as $\|u\|_\Phi\to0$; but this is obvious in view of Lemma \ref{AllProp} (iv) and item (i) in this lemma because
\[
\int_{\R^3} \Upsilon\bigl(\Psi(|u|)\bigr) \, dx = \int_{\R^3} \Phi(|u|) \, dx \to 0.
\]
Likewise for the boundedness.
\end{proof}

The next lemma is an improvement of \cite[Lemma 2.5]{MSchSz} provided in \cite{SchinoPhD}, which does not require $\Phi$ to be strictly convex.

\begin{Lem}\label{lem:phd}
There exists $C>0$ such that for all $t\in\R$
\[
\Psi(\Phi'(t)) \le C\Phi(t).
\]
\end{Lem}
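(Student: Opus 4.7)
The plan is to combine Young's equality for a complementary pair of $N$-functions with the $\Delta_2$-characterisation recorded in Lemma \ref{AllProp}(i). Recall that for an $N$-function $\Phi$ of class $\mathcal{C}^1$, the Fenchel/Legendre identity
$$\Phi(t)+\Psi(\Phi'(t)) = t\Phi'(t)$$
holds for every $t\ge 0$, since $\Phi'(t)$ realises the supremum in the definition $\Psi(\Phi'(t))=\sup_{s\ge 0}\{s\Phi'(t)-\Phi(s)\}$ (the map $s\mapsto s\Phi'(t)-\Phi(s)$ is concave in $s$ and its derivative vanishes precisely at $s=t$).

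First I would reduce to $t\ge 0$: $\Phi$ and $\Psi$ are even, $\Phi'$ is odd, so $\Psi(\Phi'(t))=\Psi(\Phi'(|t|))$ and $\Phi(t)=\Phi(|t|)$. Next I would rewrite the Young identity as
$$\Psi(\Phi'(t)) = t\Phi'(t)-\Phi(t).$$
Finally, assumption (N1) gives $\Phi\in\Delta_2$, and Lemma \ref{AllProp}(i) converts this into the existence of a constant $K>1$ such that $t\Phi'(t)\le K\Phi(t)$ for all $t\in\R$; plugging this in yields $\Psi(\Phi'(t))\le (K-1)\Phi(t)$, so one may take $C:=K-1$.

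The only mildly delicate point, and what I would spend a sentence justifying, is the Young equality itself: since $\Phi$ is $\mathcal{C}^1$, even, and strictly increasing on $[0,\infty)$ (as an $N$-function), $\Phi'$ is non-decreasing on $[0,\infty)$ and the supremum in the definition of $\Psi(\Phi'(t))$ is attained at $s=t$, giving the identity; alternatively one can simply cite \cite[Section II.3]{RaoRen}. Beyond that, the argument is a one-line computation and I would not expect any serious obstacle.
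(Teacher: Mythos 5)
Your proof is correct and is essentially the paper's own argument: Young's equality $\Psi(\Phi'(t))=t\Phi'(t)-\Phi(t)$ (cited there from \cite[Theorem I.III.3]{RaoRen}) combined with the $\Delta_2$-bound $t\Phi'(t)\le K\Phi(t)$ from Lemma \ref{AllProp}(i) gives $C=K-1$. One tiny wording caveat: since $\Phi$ need not be strictly convex, the derivative of $s\mapsto s\Phi'(t)-\Phi(s)$ may vanish on a whole interval rather than ``precisely at $s=t$'', but concavity together with the vanishing at $s=t$ still makes $s=t$ a global maximizer, so the identity holds as you claim.
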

\begin{proof}
From (N1), Lemma \ref{AllProp} (i), and \cite[Theorem I.III.3]{RaoRen} there holds
\[
\Psi(\Phi'(t)) = t\Phi'(t) - \Phi(t) \le (K-1)\Phi(t).\qedhere
\]
\end{proof}

In virtue of Lemmas \ref{lem:V} (iii) and \ref{lem:phd}, we can prove as in \cite[Proposition 2.6]{MSchSz} that $\J$ is of class $\cC^1$. It is then standard to show the following result.
\begin{Prop}\label{PropSolutE}
Let $u=v+w\in\V\oplus\W$. Then $(v,w)$ is a critical point of $\J$ if and only if $u$ is a critical point of $\cE$ if and only if $\E$ is a weak solution to \eqref{eq}, i.e. $$\int_{\R^3}\langle u,\nabla\times\nabla\times\vp\rangle\,dx=\int_{\R^3}\langle -V(x)u+f(x,u),\vp\rangle\,dx\quad\hbox{ for any }\vp\in\cC^\infty_0(\R^3,\R^3).$$
\end{Prop}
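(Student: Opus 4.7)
The plan is to reduce the statement to two independent observations: an algebraic identity $\cE(v+w)=\J(v,w)$ on the Helmholtz decomposition, and an integration-by-parts argument against compactly supported test functions.

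\textbf{Step 1: the identity $\cE(v+w)=\J(v,w)$.}
Given $u=v+w$ with $v\in\V$ and $w\in\W$, I would first note that, by definition of $\W$, $\curlop w=0$ (after approximating by gradients $\nabla\vp$, for which $\curlop\nabla\vp=0$), and hence $\curlop u=\curlop v$. Next, for $v\in\V$ I would use $\div v=0$ to write
\[
\int_{\R^3}|\curlop v|^2\,dx
= \int_{\R^3}\langle v,\curlop\curlop v\rangle\,dx
= \int_{\R^3}\langle v,-\Delta v+\nabla(\div v)\rangle\,dx
= \int_{\R^3}|\nabla v|^2\,dx,
\]
which is already embedded in Lemma~\ref{defof}. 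Substituting into \eqref{eq:action} yields $\cE(v+w)=\J(v,w)$.

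\textbf{Step 2: correspondence of critical points.}
By Lemma~\ref{defof}, the map $T\colon\V\times\W\to\D(\curl,\Phi)$, $T(v,w)=v+w$, is a continuous linear isomorphism. Since $\J=\cE\circ T$, by the chain rule
\[
\J'(v,w)[(v',w')] = \cE'(v+w)[v'+w']
\quad\text{for every }(v',w')\in\V\times\W.
\]
Because every element of $\D(\curl,\Phi)$ arises uniquely as $v'+w'$ with $v'\in\V$, $w'\in\W$, we obtain that $\J'(v,w)=0$ if and only if $\cE'(v+w)=0$.

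\textbf{Step 3: critical points of $\cE$ are weak solutions of \eqref{eq}.}
From the $\cC^1$-regularity of $\cE$ on $\D(\curl,\Phi)$, for every $\vp\in\cC_0^\infty(\R^3,\R^3)$
\[
\cE'(u)[\vp] = \int_{\R^3}\langle\curlop u,\curlop\vp\rangle\,dx + \int_{\R^3}V(x)\langle u,\vp\rangle\,dx - \int_{\R^3}\langle f(x,u),\vp\rangle\,dx.
\]
Since $\curlop u\in L^2(\R^3,\R^3)$, I can pass the two curls onto the smooth test function:
\[
\int_{\R^3}\langle\curlop u,\curlop\vp\rangle\,dx = \int_{\R^3}\langle u,\curlop\curlop\vp\rangle\,dx,
\]
which is justified (first for $u\in\cC_0^\infty(\R^3,\R^3)$ by classical integration by parts, then extended by density using the continuity of $u\mapsto\curlop u$ from $\D(\curl,\Phi)$ to $L^2$). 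Combining this with the fact that $\cC_0^\infty(\R^3,\R^3)$ is dense in $\D(\curl,\Phi)$ (by definition of the completion), the condition $\cE'(u)=0$ is equivalent to
\[
\int_{\R^3}\langle u,\curlop\curlop\vp\rangle\,dx
= \int_{\R^3}\langle -V(x)u+f(x,u),\vp\rangle\,dx
\quad\text{for all }\vp\in\cC_0^\infty(\R^3,\R^3),
\]
which is precisely the weak formulation of \eqref{eq}.

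\textbf{Main technical point.}
The only genuine technicality is checking that the right-hand side above is well defined for every $\vp\in\cC_0^\infty$: the term $\int\langle f(x,u),\vp\rangle$ is finite by (F2), Lemma~\ref{AllProp}~(iii) applied to $f(x,u)\in L^\Psi$ (via Lemma~\ref{lem:phd}) and $\vp\in L^\Phi$, while the term $\int V(x)\langle u,\vp\rangle$ is finite by H\"older's inequality using $V\in L^{3/2}$ from Lemma~\ref{lem:V}~(ii), the local embedding $L^\Phi\hookrightarrow L^6_{\mathrm{loc}}$ coming from (N2'), and the compact support of $\vp$. All other steps are routine once Lemmas~\ref{defof}, \ref{lem:V}, and \ref{lem:phd} are available.
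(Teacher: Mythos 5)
Your argument is correct and is exactly the routine verification the paper leaves implicit (it only remarks that, once $\J\in\cC^1$ is established, the proposition is ``standard''): the Helmholtz decomposition of Lemma \ref{defof} as a topological isomorphism, the identity $|\curlop v|_2=|\nabla v|_2$ on divergence-free fields, and density of $\cC_0^\infty(\R^3,\R^3)$ in $\D(\curl,\Phi)$. The only place to tighten the write-up is Step 1: for general $v\in\V$ the quantity $\curlop\curlop v$ need not be a function, so justify $\int_{\R^3}|\curlop v|^2\,dx=\int_{\R^3}|\nabla v|^2\,dx$ via Plancherel (or the identity $|\curlop v|_2^2+|\div v|_2^2=|\nabla v|_2^2$ valid on all of $\cD$ by density of $\cC_0^\infty$) rather than by pairing $v$ with $\curlop\curlop v$.
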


\section{Critical point theory}\label{sec:criticaslpoitth}

We recall the abstract setting from \cite{BartschMederskiJFA,BartschMederski,MSchSz}.
Let $X$ be a reflexive Banach space with the norm $\|\cdot\|$ and a topological direct sum decomposition $X=X^+\oplus\tX$, where $X^+$ is a Hilbert space with a scalar product $\langle .\,,.\rangle$. For $u\in X$ we denote by $u^+\in X^+$ and $\tu\in\tX$ the corresponding summands so that $u = u^++\tu$. We may assume $\langle u,u \rangle = \|u\|^2$ for any
$u\in X^+$ and $\|u\|^2 = \|u^+\|^2+\|\tu \|^2$. The topology $\cT$ on $X$ is defined as the product of the norm topology in $X^+$ and the weak topology in $\tX$. Thus $u_n\cTto u$ is equivalent to $u_n^+\to u^+$ and $\tu_n\weakto\tu$.\\
\indent 
Let $\cJ$ be a functional on $X$ of the form 
\begin{equation*}
\cJ(u) = \frac12\|u^+\|^2-\cI(u) \quad\text{for $u=u^++\tu \in X^+\oplus \tX$}.
\end{equation*}
The set
\begin{equation*}
\cM := \{u\in X:\, \cJ'(u)|_{\tX}=0\}=\{u\in X:\, \cI'(u)|_{\tX}=0\}
\end{equation*}
obviously contains all critical points of $\cJ$. Suppose the following assumptions hold.
\begin{itemize}
	\item[(I1)] $\cI\in\cC^1(X,\R)$ and $\cI(u)\ge \cI(0)=0$ for any $u\in X$.
	\item[(I2)] $\cI$ is $\cT$-sequentially lower semicontinuous:
	$u_n\cTto u\quad\Longrightarrow\quad \liminf \cI(u_n)\ge \cI(u)$.
	\item[(I3)] If $u_n\cTto u$ and $\cI(u_n)\to \cI(u)$, then $u_n\to u$.
	\item[(I4)] $\|u^+\|+\cI(u)\to\infty$ as $\|u\|\to\infty$.
	\item[(I5)] If $u\in\cM$ then $\cI(u)<\cI(u+v)$ for every $v\in \tX\setminus\{0\}$.
\end{itemize}
Clearly, if a strictly convex functional $\cI$ satisfies (I4), then (I2) and (I5) hold. Note that 
for any $u\in X^+$ we find  $m(u)\in\cM$ which is the unique global maximizer of $\J|_{u+\tX}$.
Note that 
$\cM$ needs not be a differentiable manifold because $\cI'$ is only required to be continuous.
In order to apply classical critical point theory like the mountain pass theorem to $\cJ\circ m: X^+\to \R$ we need some additional assumptions.
\begin{itemize}
	\item[(I6)] There exists $r>0$ such that $a:=\inf\limits_{u\in X^+,\|u\|=r} \cJ(u)>0$.
	\item[(I7)] $\cI(t_nu_n)/t_n^2\to\infty$ if $t_n\to\infty$ and $u_n^+\to u^+\ne 0$ as $n\to\infty$.
\end{itemize}

According to \cite[Theorem 4.4]{BartschMederskiJFA}, if (I1)--(I7) hold and 
\[
c_\cM := \inf_{\ga\in\Ga}\sup_{t\in [0,1]} \cJ(\ga(t)),
\]
where
\[
\Ga := \{\ga\in\cC([0,1],\cM):\ga(0)=0,\ \|\ga(1)^+\|>r, \text{ and } \cJ(\ga(1))<0\},
\]
then  $c_{\cM}\ge a>0$ and $\cJ$ has a $(PS)_{c_\cM}$-sequence $(u_n)$ in $\cM$, i.e., $\cJ'(u_n)\to 0$ and $\cJ(u_n)\to c_{\cM}$. If, in addition,
$\cJ$ satisfies a variant of the Palais--Smale condition in $\cM$, then $c_\cM$ is achieved by a critical point of $\cJ$. Since we look for solutions to \eqref{eq} in $\R^3$ and not in a bounded domain as in \cite{BartschMederskiJFA}, this condition is no longer satisfied. We consider the set
\begin{equation*}
\cN := \{u\in X\setminus\tX: \cJ'(u)|_{\R u\oplus \tX}=0\} = \{u\in\cM\setminus\tX: \cJ'(u)[u]=0\} \subset\cM
\end{equation*}
and we require the following condition on $\cI$:
\begin{itemize}
	\item[(I8)] $\frac{t^2-1}{2}\cI'(u)[u]+\cI(u) - \cI(tu+v)=\frac{t^2-1}{2}\cI'(u)[u] + t\cI'(u)[v] + \cI(u) - \cI(tu+v) \leq 0$\\ for every $u\in \cN$, $t\ge 0$, $v\in \tX$.
\end{itemize}
In \cite{BartschMederski,BartschMederskiJFA}, it was additionally assumed that \emph{strict inequality holds} provided $u\neq tu+v$. This stronger variant of (I8)
implies that for any $u^+\in X^+\setminus\{0\}$ the functional $\cJ$ has a unique critical point $n(u^+)$ on the half-space $\R^+u^+ +\tX$. Moreover, $n(u^+)$ is the global maximizer of $\cJ$ on this half-space, the map
$$n\colon SX^+=\{u^+\in X^+: \|u^+\|=1\} \to \cN$$
is a homeomorphism, the set $\cN$ is a topological manifold, and it is enough to look for critical points of $\cJ\circ n$. $\cN$ is called the Nehari-Pankov manifold. This is the approach of \cite{SzulkinWethHandbook}. However, if the weaker condition (I8) holds, this procedure cannot be repeated. In particular, $\cN$ needs not be a manifold.\\ 
\indent
Let $u\in\cN$. In view of (I8) we get by explicit computation
\begin{equation*}\label{eq:JonN}
	\cJ(tu+v)=\cJ(tu+v)-\cJ'(u)\Big[\frac{t^2-1}{2}u+tv\Big]\leq \cJ(u)
\end{equation*}
for any $t\geq 0$ and $v\in\tX$. Hence, $u$ is a (not necessarily unique) maximizer of $\cJ$ on  $\R^+u +\tX$.\\
\indent 
Let 
\[
\tJ := \cJ\circ m\colon X^+\to\R. 
\]
Before stating the main results of this section, we recall the following properties (i)--(iv) taken from \cite[Proof of Theorem~4.4]{BartschMederskiJFA}. Note that (I8) has not been used there.
	\begin{itemize}
		\item[(i)] For each $u^+\in X^+$ there exists a unique $\tu\in \tX$ such that $m(u^+):=u^++\tu\in\cM$. This $m(u^+)$ is the minimizer of $\cI$ on $u^++\tX$.
		\item[(ii)] $m\colon X^+\to \cM$  is a homeomorphism with the inverse $\cM\ni u\mapsto u^+\in X^+$.
		\item[(iii)] $\wt{\cJ}=\cJ\circ m \in\cC^1(X^+,\R)$.
		\item[(iv)]$\wt{\cJ}'(u^+) = \cJ'(m(u^+))|_{X^+}\colon X^+\to\R$ for every $u^+\in X^+$.
	\end{itemize}

As usual, $(u_n)\subset X^+$ will be called \emph{a Cerami sequence} for $\tJ$ at the level $c$ if $(1+\|u_n\|)\tJ'(u_n) \to 0$ and $\tJ(u_n) \to c$. In view of (I4), it is clear that if $(u_n)$ is a bounded Cerami sequence for $\tJ$, then $(m(u_n))\subset \cM$ is a bounded Cerami sequence for $\cJ$.

\begin{Th}[\!\!\cite{MSchSz}]\label{ThLink1}
	Suppose $\cJ \in \cC^1(X,\R)$ satisfies (I1)--(I7). Then:
	\begin{itemize}
		\item[(a)] $c_{\cM}\ge a>0$ and $\tJ$ has a Cerami sequence $(u_n)$ at the level $c_\cM$. 
		\item[(b)] If $\cJ$ satisfies also (I8), then $c_{\cM}=c_{\cN}:= \inf_\cN \cJ$ and $\cN$ is bounded away from $\wt X$ (hence closed in $X$).
	\end{itemize}
\end{Th}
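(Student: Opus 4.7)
My plan is to lift the geometry to the reduced functional $\tJ = \cJ \circ m\colon X^+ \to \R$, which is of class $\cC^1$ with $\tJ'(u^+) = \cJ'(m(u^+))|_{X^+}$ by (iii)--(iv), and then invoke classical mountain-pass arguments on the simpler space $X^+$.

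For (a), I first check that $\tJ$ has mountain-pass geometry. Clearly $\tJ(0)=0$. On the sphere $\|u^+\|=r$, since $m(u^+)\in u^++\tX$ is the $\cI$-minimizer on that affine slice (by (I5)), one has $\tJ(u^+)=\cJ(m(u^+))\ge \cJ(u^+)\ge a>0$ by (I6). To produce a far point I fix $w\in X^+$ with $\|w\|=1$ and set $u_n:=m(t_n w)/t_n$ for $t_n\to\infty$; since $u_n^+\equiv w\neq 0$, (I7) applied to $(u_n)$ yields $\cI(m(t_n w))/t_n^2=\cI(t_n u_n)/t_n^2\to\infty$, so $\tJ(t_n w)=t_n^2/2-\cI(m(t_n w))\to -\infty$. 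The Cerami version of the mountain-pass theorem applied to $\tJ$ then supplies a Cerami sequence $(u_n^+)\subset X^+$ at the minimax level $c_{MP}(\tJ)\ge a$, and this level coincides with $c_\cM$ because the assignments $\gamma\mapsto (\cdot)^+\circ\gamma$ and $\tilde\gamma\mapsto m\circ\tilde\gamma$ are mutually inverse bijections between $\Gamma$ and the admissible MP paths for $\tJ$, with $\cJ\circ\gamma=\tJ\circ\gamma^+$ along matching paths.

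For (b), I first recast (I8): for $u\in\cN$ the identities $\cJ'(u)[u]=0$ and $\cJ'(u)|_{\tX}=0$ reduce (I8) to $\cJ(tu+v)\le \cJ(u)$ for all $t\ge 0$ and $v\in\tX$; since $\{tu+v:v\in\tX\}=tu^++\tX$, taking the supremum in $v$ produces the Nehari-type property $\tJ(tu^+)\le \tJ(u^+)$ for every $u\in\cN$ and $t\ge 0$. The inequality $c_\cM\le c_\cN$ then follows by attaching to each $u\in\cN$ the path $\gamma_u(s):=m(sTu^+)$, where $T$ is chosen so that $\tJ(Tu^+)<0$ and $\|Tu^+\|>r$ (possible by the construction used to produce the far point in (a)): $\gamma_u\in\Gamma$, and $\sup_s \cJ(\gamma_u(s))=\sup_{t\in[0,T]}\tJ(tu^+)=\tJ(u^+)=\cJ(u)$. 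The reverse inequality $c_\cM\ge c_\cN$ requires that every $\gamma\in\Gamma$ meet the Nehari set $\widetilde\cN:=\{v\in X^+\setminus\{0\}:\tJ'(v)[v]=0\}$, which corresponds under $m$ to $\cN$; this is the main technical obstacle. I plan to argue as in \cite{BartschMederskiJFA,MSchSz} by combining the Nehari property on $\widetilde\cN$ with a standard topological/deformation argument to force the continuous path $\gamma^+=(\cdot)^+\circ\gamma$ to cross $\widetilde\cN$, so that $\sup_s\tJ(\gamma^+(s))\ge \inf_{\widetilde\cN}\tJ=c_\cN$.

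It remains to show that $\cN$ is bounded away from $\tX$. Applying (I8) with $t=0$ and $v=0$ gives $\cI(u)\le \tfrac12\cI'(u)[u]=\tfrac12\|u^+\|^2$ on $\cN$, hence $\cJ(u)\le \tfrac12\|u^+\|^2$. Evaluating the Nehari property $\tJ(tu^+)\le \tJ(u^+)=\cJ(u)$ at $t_0:=r/\|u^+\|$ (so that $\|t_0 u^+\|=r$) and using the first step of (a) on that sphere, one gets $\cJ(u)\ge \tJ(t_0 u^+)\ge a$. Combining both estimates yields $\|u^+\|^2\ge 2a$ uniformly for $u\in\cN$, which gives the claimed bound; closedness of $\cN$ in $X$ then follows since $\cN=\cM\setminus\tX$, $\cM$ is closed (via the homeomorphism $m$), and $\cN$ is bounded away from $\tX$.
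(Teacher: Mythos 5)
Your part (a) is essentially correct and follows the same route as the source of this theorem: you reduce to $\tJ=\cJ\circ m$ on $X^+$, get the sphere estimate from $\cI(m(u^+))\le\cI(u^+)$ together with (I6), produce the far point by applying (I7) to $u_n=m(t_n w)/t_n$ (legitimate, since only $u_n^+\to w\ne0$ is required), identify $c_\cM$ with the mountain-pass level of $\tJ$ through the correspondence $\gamma\leftrightarrow\gamma^+$ (using $m(0)=0$ and that $m$ is a homeomorphism), and invoke the Cerami mountain-pass theorem on the Hilbert space $X^+$. In (b), your radial-path argument for $c_\cM\le c_\cN$ is correct: it is exactly the inequality $\cJ(tu+v)\le\cJ(u)$ recorded in the text after (I8), applied along $s\mapsto m(sTu^+)$. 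Your estimate $a\le\tJ\bigl(r u^+/\|u^+\|\bigr)\le\tJ(u^+)=\cJ(u)\le\tfrac12\|u^+\|^2$, hence $\|u^+\|\ge\sqrt{2a}$ on $\cN$, is a clean proof that $\cN$ is bounded away from $\tX$. Two small slips there: $\cN\ne\cM\setminus\tX$ (it is the subset of $\cM\setminus\tX$ cut out by $\cJ'(u)[u]=0$), so closedness should be argued from continuity of $\cJ'$ together with the bound $\|u^+\|\ge\sqrt{2a}$; and $\cJ(u)\le\tfrac12\|u^+\|^2$ needs only $\cI\ge0$, not (I8).

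The genuine gap is the reverse inequality $c_\cM\ge c_\cN$, which is the substantive half of (b) and which you only announce (``argue as in the references \ldots\ a standard topological/deformation argument''). As written, this is not a proof: an arbitrary $\gamma\in\Gamma$ is not radial, and nothing you have established forces $\gamma^+$ to meet $\{v\in X^+\setminus\{0\}:\ \tJ'(v)[v]=0\}$. The naive intermediate-value argument on $s\mapsto\tJ'(\gamma^+(s))[\gamma^+(s)]$ needs sign information at both ends, and neither sign follows from (I1)--(I8): positivity for small $\|v\|$ is not an abstract hypothesis, and negativity at the endpoint would follow from $\cI'(u)[u]\ge 2\cI(u)$ for all $u$, which holds in the application via (F3) but is not assumed in the abstract setting. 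What actually makes the crossing work is the structural consequence of (I8) that you derived but did not exploit: every $t>0$ with $\frac{d}{dt}\tJ(tw)=0$ is a \emph{global} maximizer of $t\mapsto\tJ(tw)$, so along each ray $\tJ$ strictly increases, stays on a plateau of maximizers (precisely the points projecting onto $\cN$, where $\tJ\ge c_\cN$), and then strictly decreases; since $\tJ(v)\le\tfrac12\|v\|^2$ and the plateau value is $\ge a$, the plateau starts at radius $\ge\sqrt{2a}$, so points of small norm lie strictly before it, while $\gamma(1)^+$, where $\tJ<0$, lies strictly after it; moreover ``strictly before'' and ``strictly after'' are open conditions (characterized by the existence of $t>1$, respectively $t\in(0,1)$, with $\tJ(tv)>\tJ(v)$), so if $\gamma^+$ never met a plateau point these two sets would disconnect $[0,1]$. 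Until you carry out this (or an equivalent) argument, the equality $c_\cM=c_\cN$ is not established.
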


For a topological group acting on $X$, denote \emph{the orbit of $u\in X$} by $G\ast u$, i.e., 
$$G\ast u:=\{gu: g\in G\}.$$
A set $A\subset X$ is called \emph{$G$-invariant} if $gA\subset A$ for all $g\in G$. $\cJ\colon X\to\R$ is called \emph{$G$-invariant} and $T\colon X\to X^*$ \emph{$G$-equivariant} if $\cJ(gu)=\cJ(u)$ and $T(gu)=gT(u)$ for all $g\in G$, $u\in X$. \\
\indent   
In order to deal with  multiplicity of critical points, assume that $G$ is a topological group such that
\begin{itemize}
	\item[(G)] $G$ acts on $X$ by isometries and discretely in the sense that for each $u\ne 0$, $(G*u)\setminus\{u\}$ is bounded away from $u$. Moreover, $\cJ$ is $G$-invariant and $X^+,\wt X$ are $G$-invariant.
\end{itemize}
Observe that $\cM$ is $G$-invariant and $m\colon X^+\to\cM$ is $G$-equivariant.  In our application to \eqref{eq} we have $G=\mathbb{Z}^3$ acting by translations, see Theorem \ref{ThMain}.\\
\indent 
Since all the nontrivial critical points $u$ of $\cJ$ are in $\cN$, it follows from Theorem \ref{ThLink1} that $\tJ(u)\ge a$ for all such $u$. \\
\indent 
We introduce the following variant of the {\em Cerami condition} between the levels $\alpha, \beta\in\R$.
\begin{itemize}
	\item[$(M)_\alpha^\beta$]
	\begin{itemize}
		\item[(a)] Let $\alpha\le\beta$. There exists $M_\alpha^\beta$ such that $\limsup_{n\to\infty}\|u_n\|\le M_\alpha^{\beta}$ for every $(u_n)\subset X^+$ satisfying $\alpha\le\liminf_{n\to\infty}\tJ(u_n)\le\limsup_{n\to\infty}\tJ(u_n)\leq\beta$ and \linebreak $(1+\|u_n\|)\tJ'(u_n)\to 0$.
		\item[(b)] Suppose in addition that the number of critical orbits in $\tJ_\alpha^\beta$ is finite. Then there exists $m_\alpha^\beta>0$ such that if $(u_n), (v_n)$ are two sequences as above and $\|u_n-v_n\|<m_\alpha^\beta$ for all $n$ large, then  $\liminf_{n\to\infty}\|u_n-v_n\|=0$.
	\end{itemize}
\end{itemize}

Note that if $\cJ$ is even, then $m$ is odd (hence $\tJ$ is even) and $\cM$ is symmetric,  i.e., $\cM=-\cM$. Note also that $(M)_\alpha^\beta$ is a condition on $\tJ$ and \emph{not} on $\cJ$. Our main multiplicity result reads as follows.
\begin{Th}[\!\!\cite{MSchSz}]\label{Th:CrticMulti}
		Suppose $\cJ \in \cC^1(X,\R)$ satisfies (I1)--(I8) and $\dim(X^+)=\infty$.\\
		(a) If  $(M)_0^{c_\cM+\eps}$ holds for some $\eps>0$, then either $c_{\cM}$ is attained by a critical point or there exists a sequence of critical values $c_n$ such that $c_n>c_{\cM}$ and $c_n\to c_{\cM}$ as $n\to\infty$.\\		
		 (b) If $(M)_0^{\beta}$ holds for every $\beta>0$ and $\cJ$ is even, 
	then $\cJ$ has infinitely many distinct critical orbits.
\end{Th}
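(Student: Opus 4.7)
The plan is to transfer the analysis to the reduced functional $\tJ = \cJ\circ m \in \cC^1(X^+,\R)$ on the Hilbert space $X^+$. By properties (i)--(iv) recalled after Theorem \ref{ThLink1}, nontrivial critical points of $\cJ$ correspond bijectively via $m$ to critical points of $\tJ$, with matching critical values. Under (G) the map $m$ is $G$-equivariant, so $\tJ$ is $G$-invariant; if $\cJ$ is even, then $m$ is odd and $\tJ$ is even. Both assertions then reduce to a Ljusternik--Schnirelman/linking problem for $\tJ$ on $X^+$, in which the needed compactness is supplied not by the classical Palais--Smale condition but by the variants encoded in $(M)_\alpha^\beta$.

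For part (a), I argue by contradiction: assume $c_{\cM}$ is not attained and that, for some $0<\eps'\le\eps$, no critical value of $\tJ$ lies in $(c_{\cM},c_{\cM}+\eps']$. Theorem \ref{ThLink1}(a) furnishes a Cerami sequence for $\tJ$ at level $c_{\cM}$; part (a) of $(M)_0^{c_{\cM}+\eps}$ bounds it, and part (b), applied with the single critical orbit $\{0\}$ in the relevant band, quantitatively separates Cerami sequences that are not asymptotically close. With this I construct a locally Lipschitz, $G$-equivariant pseudo-gradient vector field on a tubular neighborhood of $\tJ^{-1}([c_{\cM}-\delta,c_{\cM}+\eps'])$ bounded away from $0$, integrate it to obtain a $G$-equivariant deformation $\eta$ mapping $\tJ^{c_{\cM}+\eps'/2}$ into $\tJ^{c_{\cM}-\delta}$, and apply $\eta$ (transported via $m$) to a near-minimizing element of $\Gamma$. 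The resulting path contradicts the definition of $c_{\cM}$.

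For part (b) I use a Bartsch-type pseudoindex argument. Since $\cJ$ is even, so is $\tJ$; using $\dim X^+=\infty$, for each $k\in\N$ set
\[
c_k := \inf_{A\in\Sigma_k}\sup_{u\in A}\tJ(u),
\]
where $\Sigma_k$ is a family of closed, symmetric, $G$-invariant subsets of $X^+\setminus\{0\}$ of pseudoindex at least $k$ (nonempty thanks to $\dim X^+ = \infty$). Hypothesis (I6) forces $c_1\ge a>0$, and $c_k\le c_{k+1}$ is immediate. If $(c_k)$ assumes infinitely many distinct values, we obtain infinitely many critical orbits at distinct levels. Otherwise some value $c=c_k=\cdots=c_{k+j}$ repeats; assuming only finitely many critical orbits lie at level $c$, apply $(M)_0^\beta$ for a suitable $\beta>c$ and repeat the equivariant deformation of part (a), now excising these finitely many orbits, to produce an equivariant deformation decreasing $\tJ$ past $c$ on sets of pseudoindex $k+j$. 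This contradicts the pseudoindex inequality via the standard genus-dropping argument.

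The main obstacle is the construction of the $G$-equivariant deformation in the absence of a classical Palais--Smale condition. With $G=\Z^3$ acting by unbounded translations, a naive pseudo-gradient flow need not remain bounded and could drift through arbitrarily many copies of a single critical orbit. Condition $(M)_\alpha^\beta$(b) is engineered precisely to overcome this: it quantifies the separation of Cerami sequences modulo $G$ when the critical set in the band is finite, allowing a bounded pseudo-gradient to be built on a fundamental domain and then extended equivariantly. Verifying that this deformation is simultaneously $G$-equivariant and $\mathbb{Z}/2$-equivariant (in part (b)), and that it preserves the pseudoindex, is the technical heart of the argument, but follows the Bartsch/Szulkin--Weth template once $(M)$ is in place.
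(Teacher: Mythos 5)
Note first that the survey does not prove this theorem: it is quoted verbatim from \cite{MSchSz}, so the comparison is with the argument given there. Your overall route is the same as in that reference: pass to $\tJ=\cJ\circ m\in\cC^1(X^+,\R)$ using properties (i)--(iv), observe that $m$ is $G$-equivariant (and odd when $\cJ$ is even), and replace the missing Palais--Smale condition by the quantitative information in $(M)_\alpha^\beta$ when building equivariant pseudo-gradient deformations. Your part (a) is essentially the cited argument: under the negation of the conclusion the only critical orbit of $\tJ$ in the band $[0,c_\cM+\eps']$ is $\{0\}$ (every nontrivial critical point lies in $\cN$, hence at a level $\ge c_\cN=c_\cM$ by Theorem \ref{ThLink1}(b)), so both parts of $(M)_0^{c_\cM+\eps}$ are available, and a deformation pushing a near-optimal path below $c_\cM$ contradicts the minimax characterization; one should only add the routine remark that the deformed path remains in the admissible class because $\tJ\ge a>0$ on $\{u\in X^+:\|u\|=r\}$ while the endpoint keeps negative energy.

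Part (b), however, has a structural gap as organized. Your dichotomy begins by asserting that if the values $c_k$ are infinitely many and distinct, then they give infinitely many critical orbits; but in this setting nothing guarantees that any $c_k$ is a critical value unless you already have a deformation lemma, and the only deformation tool at your disposal, $(M)_0^\beta$(b), carries the hypothesis that the number of critical orbits in the band is finite. So the first branch is unsupported (indeed circular) as stated. The proof in \cite{MSchSz}, following the Szulkin--Weth scheme, avoids this by a single global contradiction: assume $\cJ$ has only finitely many distinct critical orbits; then $(M)_0^\beta$(b) applies in every band, the $G$- and $\Z/2$-equivariant deformation lemma holds, each $c_k$ is a critical value, and $c_k<c_{k+1}$ strictly, because a finite union of orbits $\pm(G\ast u_j)$ is a discrete symmetric set avoiding $0$ and therefore has genus at most $1$, so no single level can support a critical set of genus $\ge 2$; this produces infinitely many distinct critical values, contradicting the assumed finiteness (critical values are constant on orbits). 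Your second branch contains exactly this genus estimate, so the repair is to restructure all of (b) around the global contradiction hypothesis, making the $(M)$-machinery legitimately available in both branches; with that reorganization (and the standard verification that the minimax classes are nonempty with $a\le c_k<\infty$, which uses $\dim X^+=\infty$ and (I6)--(I7)), your sketch matches the cited proof.
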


\section{Properties of the functional $J$ for curl-curl}\label{sec:Jcurlcurl}

Recall our earlier assumption that (N1), (N2'), (N3) and (F1)--(F4) hold. We will check that assumptions (I1)--(I8) are satisfied and we want to apply Theorems \ref{ThLink1} and \ref{Th:CrticMulti}.\\
\indent Define the manifold
\begin{equation*}
\mathcal{M} := \{(v,w)\in\V\times\W:
\J'(v,w)[(0,\psi)]=0\,\hbox{ for any }\psi\in \W\}
\end{equation*}
and the Nehari-Pankov set for $\J$
\begin{eqnarray*}
\mathcal{N} &:=& \{(v,w)\in\V\times\W: u\neq 0,\; 
\J'(v,w)[(v,w)]=0, \\
&&\hbox{ and }\J'(v,w)[(0,\psi)]=0\,\hbox{ for any }\psi\in \W\} 
\ \subset\ \cM. \nonumber
\end{eqnarray*}
Observe that $u=v+w\in\cN_{\cE}$ if and only if $(v,w)\in\cN$ ($\cN_{\cE}$ is defined in \eqref{DefOfNehari1}). Moreover, $\mathcal{N}$ contains all nontrivial critical points of $\J$. In general  $\cN_{\cE}$, $\cN$, and $\cM$ are not $\cC^1$-manifolds.

\begin{Prop}\label{Propuv_N}
	If $(v,w)\in \V\times\W$ then
	$$\J(tv,tw+\psi)-\J'(v,w)\Big[\Big(\frac{t^2-1}{2}v,\frac{t^2-1}{2}w+t\psi\Big)\Big]\leq\J(v,w)
	$$
	for any $\psi\in\W$ and $t\geq 0$. 
\end{Prop}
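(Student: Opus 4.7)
The plan is to verify this proposition — which is the concrete realization of the abstract condition (I8) of Section~\ref{sec:criticaslpoitth} for the functional $\J$ and the splitting $\V\oplus\W$ — by a direct algebraic expansion that reduces the claim to a pointwise inequality on the density of the nonlinear term. I would substitute \eqref{eqJ} into the difference $\J(tv,tw+\psi) - \J'(v,w)\bigl[\bigl(\tfrac{t^2-1}{2}v,\tfrac{t^2-1}{2}w+t\psi\bigr)\bigr] - \J(v,w)$ and, setting $u:=v+w$ (so that the $\V\oplus\W$-sums involved are $tu+\psi$ and $\tfrac{t^2-1}{2}u+t\psi$ respectively), separate the kinetic, quadratic-in-$V$, and nonlinear contributions.

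The kinetic term $\tfrac12\int_{\R^3}|\nabla v|^2\,dx$ appears with total coefficient $\tfrac{t^2}{2}-\tfrac12-\tfrac{t^2-1}{2}=0$ and hence cancels identically. For the quadratic $V$-term, the elementary pointwise identity
\[
\tfrac12|tu+\psi|^2-\langle u,\,\tfrac{t^2-1}{2}u+t\psi\rangle-\tfrac12|u|^2=\tfrac12|\psi|^2
\]
collapses the $V$-contribution to $\tfrac12\int_{\R^3} V(x)|\psi|^2\,dx$, which is nonpositive by the sign condition in (V) (and vanishes trivially when $V\equiv 0$, as in the $V=0$ statements of Theorems~\ref{ThMain} and \ref{Th:main1}). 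Therefore the proposition reduces to the pointwise bound
\begin{equation}\label{eq:plan-pw}
F(x,tu+\psi)-F(x,u)\ \geq\ \tfrac{t^2-1}{2}\langle f(x,u),u\rangle+t\langle f(x,u),\psi\rangle
\end{equation}
for a.e.\ $x\in\R^3$, all $u,\psi\in\R^3$, and all $t\geq 0$; integrating \eqref{eq:plan-pw} and combining with the previous two steps yields the claim.

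The pointwise bound \eqref{eq:plan-pw} is the technical heart of the argument and the main obstacle. If $\langle f(x,u),u\rangle=0$, then (F3) combined with (F2) and the positivity of $\Phi$ away from $0$ forces $u=0$, and \eqref{eq:plan-pw} reduces to $F(x,\psi)\geq 0$, which is immediate from (F2). When $\langle f(x,u),u\rangle>0$, I would set $\bar u:=tu+\psi$ and rewrite \eqref{eq:plan-pw} equivalently as
\[
F(x,u)-F(x,\bar u)\ \leq\ \frac{\langle f(x,u),u\rangle^2-\langle f(x,u),\bar u\rangle^2}{2\langle f(x,u),u\rangle},
\]
which is exactly the conclusion of (F4) under its symmetry hypothesis $\langle f(x,u),\bar u\rangle=\langle f(x,\bar u),u\rangle>0$. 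The delicate part is to handle the remaining configurations, in which this symmetry fails or one of the inner products is nonpositive; these are reduced to the symmetric setting by a perturbation/continuity argument using (F3) and the regularity of $f$, in the spirit of \cite[Lemma~2.2]{Mederski2015} and the refinement carried out in \cite{MSchSz}. The conditional nature of (F4) is precisely what prevents a direct one-line convexity derivation of \eqref{eq:plan-pw} and forces this case analysis.
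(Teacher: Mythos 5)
Your reduction coincides with the paper's: after expanding \eqref{eqJ}, the $|\nabla v|^2$-term cancels, the quadratic potential term collapses (pointwise) to $\tfrac12 V(x)|\psi|^2\le 0$, and everything rests on the pointwise inequality
\[
\Big\langle f(x,u),\frac{t^2-1}{2}\,u+t\psi\Big\rangle+F(x,u)-F(x,tu+\psi)\le 0,\qquad u=v+w,
\]
for all $t\ge 0$, $\psi\in\R^3$ and a.e.\ $x$; your treatment of the degenerate case $\langle f(x,u),u\rangle=0$ (which forces $u=0$ by (F2)--(F3)) is also fine. The problem is the step you yourself call the heart of the argument. First, your ``equivalent'' rewriting is not an equivalence: for a fixed $t$ the needed bound is $F(x,u)-F(x,\bar u)\le\frac{t^2+1}{2}\langle f(x,u),u\rangle-t\langle f(x,u),\bar u\rangle$ with $\bar u=tu+\psi$, whereas the (F4)-type bound $F(x,u)-F(x,\bar u)\le\frac{\langle f(x,u),u\rangle^2-\langle f(x,u),\bar u\rangle^2}{2\langle f(x,u),u\rangle}$ is strictly stronger, the difference of the right-hand sides being $\frac{\bigl(t\langle f(x,u),u\rangle-\langle f(x,u),\bar u\rangle\bigr)^2}{2\langle f(x,u),u\rangle}\ge 0$. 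Second, and decisively, you propose to obtain this stronger bound in the configurations where the symmetry hypothesis of (F4) fails ``by a perturbation/continuity argument''. That is not an argument: (F4) gives no information off the symmetric set $\langle f(x,u),v\rangle=\langle f(x,v),u\rangle>0$, nothing in (F1)--(F4) produces symmetric pairs approximating a given nonsymmetric one with control of the inequality, and the examples where \eqref{equality} holds with equality show the bound is sharp, so trying to force the (F4)-conclusion for arbitrary $\bar u$ is not only unjustified but most likely false in general.

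The proof the paper relies on (\cite[Proof of Proposition 4.1]{MSchSz}) circumvents this by generating the symmetry rather than assuming or perturbing towards it. Fix $x$, $u\ne 0$, $\psi$, and maximize
\[
\phi(t):=\frac{t^2-1}{2}\langle f(x,u),u\rangle+t\langle f(x,u),\psi\rangle+F(x,u)-F(x,tu+\psi)
\]
over $t\ge 0$. By (F2)--(F3), $\phi(0)\le -F(x,\psi)\le 0$; by (F2) and (N3), $\phi(t)\to-\infty$ as $t\to\infty$; and at an interior maximizer $t_0$ the first-order condition $\phi'(t_0)=0$ reads exactly $\langle f(x,u),t_0u+\psi\rangle=\langle f(x,t_0u+\psi),u\rangle$, so (F4) is applicable there when this common value is positive and yields $\phi(t_0)\le-\frac{\bigl(t_0\langle f(x,u),u\rangle-\langle f(x,u),t_0u+\psi\rangle\bigr)^2}{2\langle f(x,u),u\rangle}\le 0$, while (F3) together with $F\ge 0$ settles the case where it is nonpositive. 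In short, the symmetry hypothesis of (F4) is produced by maximizing in $t$, not recovered by perturbation; without this idea your proof of the key pointwise inequality does not go through, even though everything preceding it matches the paper.
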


\begin{proof}
	Let $(v,w)\in\V\times\W$, $\psi\in\W$, $t\geq 0$. We define
	$$D(t,\psi):=\J(t v,tw+\psi)-\J(v,w)-\J'(v,w)\Big[\Big(\frac{t^2-1}{2}v,\frac{t^2-1}{2}w+t\psi\Big)\Big]-\frac{1}{2}\int_{\R^3}V(x)|\psi|^2\,dx$$
and observe that
	\begin{eqnarray*}
		D(t,\psi)
		&=&\int_{\R^3}\langle f(x,v+w),\frac{t^2-1}{2}(v+w)+t\psi\rangle\, dx\\
		&&+\int_{\R^3}F(x,v+w)-F(x,t(v+w)+\psi)\,dx.
	\end{eqnarray*}
Now we can argue as in \cite[Proof of Proposition 4.1]{MSchSz} and show that 
$$\langle f(x,v+w),\frac{t^2-1}{2}(v+w)+t\psi\rangle+F(x,v+w)-F(x,t(v+w)+\psi) \le 0$$
for all $t\geq 0$, $\psi\in\R^3$ and a.e. $x\in\R^3$, where (F4) plays a crucial role.
Since $V(x)\leq 0$, we conclude.
\end{proof}

Consider $I:L^{\Phi}\to \R$ and  $\cI:L^{\Phi}\times \W\to\R$ given by
\begin{equation}\label{DefOfXi}
\cI(v,w) := I(v+w) := \int_{\R^3} -\frac12 V(x) |v+w|^2 + F(x,v+w) \, dx \quad \hbox{ for } (v,w) \in L^{\Phi}\times\W
\end{equation}
and recall that $I$ and $\cI$ are of class $\cC^1$. Under the assumptions of Theorem \ref{ThMain}, $I$ and $\cI$ are strictly convex. Moreover, the following property holds.

\begin{Lem}\label{LemConvWeakIpliesStrong}
	If $u_n\rightharpoonup u$ in $L^{\Phi}$ and
	$I(u_n)\to I(u)$,
	then $u_n\to u$ in $L^{\Phi}$.
\end{Lem}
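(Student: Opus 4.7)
I use the standard convexity plus strict convexity argument adapted to Orlicz spaces. Recall from the preceding discussion that under the running hypotheses $I$ is strictly convex (either $V<0$ a.e.\ makes the quadratic part strictly convex, or $V=0$ and $F$ is uniformly strictly convex). Since $u_n \rightharpoonup u$ in $L^\Phi$, the sequence $(u_n)$ is bounded and $(u_n+u)/2 \rightharpoonup u$ as well.

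\textbf{Midpoint sandwich and defect control.} Being convex and continuous, $I$ is sequentially weakly lower semicontinuous, so $I(u)\leq \liminf_n I((u_n+u)/2)$. Convexity also gives $I((u_n+u)/2) \leq \tfrac12(I(u_n)+I(u)) \to I(u)$, hence $I((u_n+u)/2)\to I(u)$. Using the parallelogram identity for the quadratic term, the pointwise defect
\[
\rho_n(x) := -\frac{V(x)}{8}|u_n(x) - u(x)|^2 + \Bigl[\tfrac12\bigl(F(x,u_n) + F(x,u)\bigr) - F\bigl(x, (u_n+u)/2\bigr)\Bigr]
\]
has two nonnegative summands and satisfies $\int\rho_n\,dx \to 0$. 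Therefore $\rho_n \to 0$ in $L^1(\R^3)$, and, along a subsequence, a.e.

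\textbf{A.e.\ convergence and passage to $L^\Phi$.} In case (V), the condition $V<0$ a.e.\ immediately forces $u_n\to u$ a.e.\ from the vanishing of the first summand of $\rho_n$. When $V=0$, uniform strict convexity of $F$ provides, for each $\eps>0$ and each $M>0$, a positive lower bound $\eta(\eps,M)$ for the $F$-defect on $\{|u_n-u|\geq\eps,\,|u_n|,|u|\leq M\}$; combined with the tail estimate $\Phi(M)|\{|u_n|>M\}|\leq\int\Phi(|u_n|)\,dx$ (uniform in $n$ by boundedness of $(u_n)$ in $L^\Phi$ and Lemma \ref{AllProp}(v)), this yields $u_n\to u$ in measure on every ball, hence a.e.\ along a further subsequence via a diagonal argument. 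Next I claim $F(x,u_n)\to F(x,u)$ in $L^1$: if $V=0$ this is immediate from $I(u_n)\to I(u)$; in case (V), Fatou's lemma applied separately to the nonnegative integrands $-\tfrac{V}{2}|u_n|^2$ and $F(x,u_n)$ shows that both $\liminf$'s are at least the corresponding values at $u$, and since their sum converges to $I(u)$, each inequality must be an equality, so Scheff\'e's lemma yields $L^1$-convergence. Finally, from (F2), convexity of $\Phi$, and $\Phi\in\Delta_2$,
\[
0 \leq \Phi(|u_n-u|) \leq C\bigl(\Phi(|u_n|)+\Phi(|u|)\bigr) \leq \tfrac{C}{c_2}\bigl(F(x,u_n)+F(x,u)\bigr),
\]
where the RHS converges in $L^1$ and the LHS tends to $0$ a.e.; the generalized dominated convergence theorem (a.e.\ convergence dominated by an $L^1$-convergent nonnegative sequence) then gives $\int\Phi(|u_n-u|)\,dx\to 0$, and Lemma \ref{AllProp}(iv) promotes this to $|u_n-u|_\Phi\to 0$. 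A subsequence-of-subsequence argument removes the extractions.

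\textbf{Main obstacle.} The delicate step is the a.e.\ extraction of $u_n\to u$ in the $V=0$ case, because uniform strict convexity only supplies a modulus of convexity depending on the compact set of admissible values; one must truncate simultaneously in amplitude (via the $L^\Phi$-boundedness of $(u_n)$) and in space (working on balls) in a way that remains uniform in $n$. Once a.e.\ convergence is secured, the rest is a routine application of generalized DCT together with Lemma \ref{AllProp}.
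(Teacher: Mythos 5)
Your argument is correct, and its core coincides with the paper's: a.e.\ convergence of $u_n$ to $u$ is extracted exactly as in the paper's proof (under (V) from the vanishing of the weighted quadratic term, under $V=0$ from the uniform-strict-convexity defect on the truncated sets $\{|u_n-u|\ge\eps,\,|u_n|,|u|\le M\}$, whose measure you control with the tail estimate the paper leaves implicit), and the final upgrade to norm convergence goes through $\int_{\R^3}\Phi(|u_n-u|)\,dx\to0$ plus the $\Delta_2$-condition (Lemma \ref{AllProp}(iv)). You differ in two places, both legitimate. First, instead of splitting $I(u_n)\to I(u)$ at the outset into convergence of the $V$-term and of the $F$-term (the paper does this and, under (V), deduces $\int_{\R^3}V|u_n-u|^2\,dx\to0$ from weak convergence, which uses $Vu\in L^\Psi$ via Lemma \ref{lem:V}(iii)), you work with the single midpoint defect $\rho_n$ obtained from the parallelogram identity; this treats the quadratic and the $F$ parts uniformly and postpones the splitting to a Fatou/Scheff\'e argument after a.e.\ convergence is known. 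Second, where the paper invokes the Brezis--Lieb-type result \cite[Lemma 4.3]{MSchSz} to get $\int_{\R^3}F(x,u_n-u)\,dx\to0$ and then applies (F2), you instead establish $F(\cdot,u_n)\to F(\cdot,u)$ in $L^1$ (Scheff\'e) and conclude by the generalized dominated convergence theorem with the majorant $C\bigl(F(\cdot,u_n)+F(\cdot,u)\bigr)$, using $\Phi(|u_n-u|)\le C'(\Phi(|u_n|)+\Phi(|u|))$ from convexity and $\Delta_2$. Your route is more self-contained (no external Brezis--Lieb lemma) and spells out the measure-theoretic details and the subsequence-of-subsequences step; the paper's route is shorter given the cited lemma and yields the slightly stronger conclusion $\int_{\R^3}F(x,u_n-u)\,dx\to0$. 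No gaps.
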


\begin{proof}
We show that (up to a subsequence) $u_n(x)\to u(x)$ a.e. on $\R^3$.
Since $I(u_n)\to I(u)$, we have
\begin{equation*}
	\lim_{n\to\infty}\int_{\R^3}V(x)|u_n|^2\,dx=\int_{\R^3}V(x)|u|^2\,dx.
\end{equation*}
and
\begin{equation} \label{i3}
	\lim_{n\to\infty}\int_{\R^3}F(x,u_n)\,dx=\int_{\R^3}F(x,u)\,dx.
\end{equation}
If $(V)$ holds, then  passing to a subsequence 
$\lim_{n\to\infty}\int_{\R^3}V(x)|u_n-u(x)|^2\,dx=0$
and $u_n(x)\to u(x)$ a.e. on $\R^3$. If $V=0$, then by the strict convexity
we infer that for any $0<r\leq R$,
\begin{equation*}
	m_{r,R}:=\inf_{\substack{x,u_1,u_2\in\R^3\\ r\leq|u_1-u_2|,\\|u_1|,|u_2|\leq R} }\;
	\frac{1}{2}\bigl(F(x,u_1)+F(x,u_2)\bigr)-F\Big(x,\frac{u_1+u_2}{2}\Big)>0.
\end{equation*} 
Observe that by \eqref{i3} and convexity of $F$,
$$
0
\leq \limsup_{n\to\infty}\int_{\R^3}\frac12(F(x,u_n) + F(x,u))
- F\left(x,\frac{u_n+u}{2}\right)\,dx
\leq 0.
$$
Therefore, setting
$$
\Om_n:=\{x\in\R^3: |u_n-u|\geq r,\;|u_n|\leq R,\;
|u|\leq R\},
$$
there holds
$$
|\Om_n| m_{r,R}
\leq \int_{\R^3}\frac12(F(x,u_n) + F(x,u))
- F\left(x,\frac{u_n+u}{2}\right)\, dx,
$$
and thus $|\Om_n|\to 0$ as $n\to\infty$. Since $0<r\leq R$ are arbitrarily chosen, we deduce
$$
u_n\to u \quad \hbox{a.e.\ on }\R^3.
$$
In view of  a variant of the Brezis-Lieb result \cite[Lemma 4.3]{MSchSz},  we obtain
$$\int_{\R^3}F(x,u_n)\, dx-\int_{\R^3}F(x,u_n-u)\, dx\to\int_{\R^3}F(x,u)\, dx$$
and hence
$$\int_{\R^3}F(x,u_n-u)\, dx\to0.$$
By (F2) and Lemma \ref{AllProp} (iii) we get $|u_n-u|_\Phi\to 0$.
\end{proof}

\begin{Lem}\label{lem:supQ}
Let $(v_n,w_n)\subset \cV\times\cW$, $0\ne v\in\cV$, and $(t_n)\subset(0,\infty)$ such that $v_n\weakto v$ in $\cD$ and $t_n\to\infty$ as $n\to\infty$. Then
\[
\lim_{n\to\infty} \frac{1}{t_n^2} \int_{\R^3} \Phi(t_n|v_n+w_n|) \, dx = \infty.
\]
\end{Lem}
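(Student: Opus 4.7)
The plan is to argue by contradiction. Suppose that, passing to a subsequence, the integral $\frac{1}{t_n^2}\int_{\R^3}\Phi(t_n|u_n|)\,dx$ stays bounded by some $L<\infty$, writing $u_n:=v_n+w_n$. I will exploit (N3) quantitatively: for every $\eps>0$ there is $R_\eps>0$ with $\Phi(s)\ge s^2/\eps$ whenever $s\ge R_\eps$. Splitting $\R^3$ at the level $|u_n|=R_\eps/t_n$ and restricting to a ball $B_R$ gives
\[
\int_{B_R}|u_n|^2\,dx \le \eps L + (R_\eps/t_n)^2|B_R|,
\]
and sending first $n\to\infty$ and then $\eps\to0$ forces $u_n\to 0$ in $L^2_{\mathrm{loc}}(\R^3)$.

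Next I would combine this with the assumed weak convergence $v_n\weakto v$ in $\cD=\cD^{1,2}$. Since $\cD^{1,2}\hookrightarrow L^6$, the sequence $(v_n)$ is bounded in $H^1(B_R)$ for each $R$, so Rellich--Kondrachov together with a diagonal extraction yields $v_n\to v$ in $L^2_{\mathrm{loc}}$ along a subsequence. Therefore $w_n=u_n-v_n\to -v$ in $L^2_{\mathrm{loc}}$, and hence also in the sense of distributions.

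The crux is a rigidity statement for the limit $-v$. Each $w_n\in\cW$ is an $L^\Phi$-limit of gradients $\nabla\phi_{n,k}$ with $\phi_{n,k}\in\cC_0^\infty(\R^3)$. The Orlicz--H\"older inequality (Lemma \ref{AllProp}(iii)) applied with $\chi_K\in L^\Psi$ for compact $K$ yields the continuous embedding $L^\Phi\hookrightarrow L^1_{\mathrm{loc}}$, which promotes this approximation to distributional convergence, so $\curlop w_n=0$ in the sense of distributions for each $n$. Passing to the limit gives $\curlop v=0$; together with $\div v=0$ (from $v\in\cV$) and the identity $-\Delta v=\curlop(\curlop v)-\nabla(\div v)$, this forces $\Delta v=0$ distributionally. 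Since $v\in L^6(\R^3)$ via $\cD^{1,2}\hookrightarrow L^6$, the classical fact that a harmonic $L^p$ function in $\R^3$ must vanish gives $v\equiv0$, contradicting $v\neq0$.

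The main obstacle I expect is justifying that $w_n\in\cW$ has zero distributional curl from its Orlicz-level approximation by gradients; this rests on $L^\Phi\hookrightarrow L^1_{\mathrm{loc}}$, which is a purely structural property of $N$-function spaces. The remainder is a clean interplay between an $N$-function energy bound and classical elliptic rigidity, and no Egorov-type extraction or pointwise dichotomy analysis is required.
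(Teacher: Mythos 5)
Your proof is correct, and while it shares the paper's crucial rigidity idea, it reaches it by a genuinely different and more streamlined route. The paper does not set up a global contradiction: it first disposes of the case where $v_n+w_n$ is unbounded in $L^2(B(0,R))$ (there (N3) already forces divergence), and in the bounded case it runs a dichotomy on the measures of the superlevel sets $\Omega_n=\{|v_n+w_n|\ge\eps\}$ — if $|\Omega_n|\to0$ for every $\eps$ it passes to convergence in measure and a.e. convergence to get $w_n\to -v$ and the rigidity contradiction, and otherwise it gets divergence of the scaled integral from the lower bound on $|\Omega_n|$ combined with $\Phi(t)/t^2\to\infty$. You instead assume the scaled integral bounded along a subsequence and use (N3) quantitatively to conclude directly that $v_n+w_n\to0$ in $L^2_{\mathrm{loc}}$, which together with Rellich for the bounded sequence $(v_n)\subset\cD$ gives $w_n\to -v$ in $L^2_{\mathrm{loc}}$; this eliminates both the case distinction and the in-measure/a.e. extraction. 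The rigidity step is also concluded differently: the paper invokes Leinfelder's lemma \cite{Leinfelder} to write $v=\nabla\xi$ and then uses $\div v=0$ to get harmonicity, whereas you use the distributional identity $\Delta v=\nabla(\div v)-\nabla\times(\nabla\times v)$, Weyl's lemma, and the Liouville property for harmonic functions in $L^6(\R^3)$ (legitimate since $\cV\subset\cD\hookrightarrow L^6$); your justification that $\nabla\times w_n=0$ distributionally via $L^\Phi\hookrightarrow L^1_{\mathrm{loc}}$ (Lemma \ref{AllProp}(iii) applied with $\chi_K\in L^\Psi$) is exactly what is needed and is left implicit in the paper. What your version buys is a shorter, self-contained argument; what the paper's buys is a direct, non-contradictory exhibition of divergence through the measure bound on $\Omega_n$, in the same spirit as the concentration analysis used elsewhere in its compactness arguments.
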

\begin{proof}
We argue as in proof of \cite[Proposition 4.4]{MSchSz}.
Take $R_0>0$ such that $v\neq 0$ in $B(0,R_0)$.
In view of (N3) we find $C>0$ such that
$$C\Phi(t)\geq t^2\quad\hbox{for }t\geq 1.$$
Then 
\begin{equation*}
	\int_{B(0,R)}|v_n+w_n|^2\,dx\leq C\int_{\R^3}\Phi(t_n|v_n+w_n|)/t_n^2\,dx+\int_{B(0,R)\cap\{|v_n+w_n|\leq 1\}}|v_n+w_n|^2\,dx
\end{equation*}
and the statement is true provided $v_n+w_n$ is unbounded in $L^2(B(0,R),\R^3)$ for some $R\geq R_0$. Now, suppose that $v_n+w_n$ is bounded in $L^2(B(0,R),\R^3)$ for any $R\geq R_0$.  
We may assume passing to a subsequence that $v_n\to v$ a.e. and $w_n\weakto w$ in $L^2_{loc}(\R^3,\R^3)$ for some $w$. Given $\eps>0$, let 
\begin{equation}\label{aetoinfI7}
	\Omega_n := \{x\in\R^3: |v_n(x)+w_n(x)|\ge\eps\}.
\end{equation}
We claim that there exists $\eps>0$ such that $\lim_{n\to\infty}|\Omega_n|>0$, possibly after passing to a subsequence. 
Arguing indirectly, suppose this limit is 0 for each $\eps$. Then $v_n+w_n\to 0$ in measure, so up to a subsequence $v_n+w_n\to 0$ a.e., hence $w_n\to -v$ a.e. and  $w_n\rh -v$ in $L^2_{loc}(\R^3,\R^3)$. Since $\curlop w_n=0$ in the distributional sense, the same is true of $v$. Thus there is $\xi\in H^1_{loc}(\R^3)$ such that $v=\nabla \xi$, see \cite[Lemma 1.1(i)]{Leinfelder}. As $\div(\nabla \xi)=\div v =0$, it follows that $\xi$, and therefore $v$, is harmonic. Recalling that $v\in\cD$, we obtain $v=0$. This is a contradiction.
Taking $\eps$ in \eqref{aetoinfI7} such that $\lim_{n\to\infty}|\Omega_n|>0$, we obtain
\[
\int_{\R^3} \frac{\Phi(t_n|v_n+w_n|)}{t_n^2} \, dx = \int_{\R^3} \frac{\Phi(t_n|v_n+w_n|)}{t_n^2|v_n+w_n|^2}|v_n+w_n|^2 \, dx \ge \eps^2 \int_{\Om_n} \frac{\Phi(t_n|v_n+w_n|)}{t_n^2|v_n+w_n|^2} \, dx \to \infty.\qedhere
\]
\end{proof}

\begin{Prop}\label{PropDefOfm(u)} Conditions  (I1)--(I8) are satisfied and
 there is a Cerami sequence $(v_n)\subset \cV$ for $\cJ\circ m$ at the level $c_{\cN}$, i.e.,   $\J\circ m(v_n)\to c_{\cN}$ and $(1+\|v_n\|_\cD)(\J\circ m)'(v_n)\to 0$ as $n\to\infty$, where
	$$c_{\cN}:=\inf_{(v,w)\in\mathcal{N}}\J(v,w)>0.$$
\end{Prop}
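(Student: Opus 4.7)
The plan is to verify the abstract conditions (I1)--(I8) from Section \ref{sec:criticaslpoitth} for $\J$ on $X=\V\times\W$, with $X^+=\V$ (endowed with the $\|\cdot\|_\cD$-scalar product) and $\tX=\W$, so that $\J=\tfrac12\|\cdot\|_\cD^2-\cI$ in the abstract framework. Theorem \ref{ThLink1}(a) then yields a Cerami sequence for $\wt\J=\J\circ m$ at the mountain-pass level $c_\cM$, and Theorem \ref{ThLink1}(b), which uses (I8), identifies $c_\cM=c_\cN$.

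I would dispose of the routine items first. (I1) is immediate: $\cI\in\cC^1$ was recorded after Lemma \ref{lem:phd}, and $\cI\ge 0$ because $-V\ge 0$ while $F\ge c_2\Phi\ge 0$ by (F2). Condition (I7) follows from Lemma \ref{lem:supQ} after bounding $\cI(t_n u_n)/t_n^2\ge c_2 t_n^{-2}\int_{\R^3}\Phi(t_n|v_n+w_n|)\,dx$ via (F2) and $V\le 0$, and (I8) is precisely Proposition \ref{Propuv_N}. The convexity conditions (I2) and (I5) hinge on strict convexity of $\cI$ with respect to $u=v+w$: under the hypotheses of Theorem \ref{ThMain}, either $V<0$ a.e.\ (so $u\mapsto\tfrac12\int(-V)|u|^2$ is strictly convex by applying the parallelogram identity pointwise) combined with $F$ convex, or $V=0$ with $F$ uniformly strictly convex. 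A continuous convex functional on a Banach space is weakly lower semicontinuous, and $(v_n,w_n)\cTto(v,w)$ forces $v_n+w_n\weakto v+w$ in $L^\Phi$, giving (I2); (I5) follows because strict convexity makes any critical point of $\psi\mapsto\cI(v,w+\psi)$ on $\W$ a strict global minimum. For (I3), I would apply Lemma \ref{LemConvWeakIpliesStrong} to $u_n=v_n+w_n$ to obtain $u_n\to u$ in $L^\Phi$; subtracting $v_n\to v$ yields $w_n\to w$ strongly.

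Coercivity (I4) proceeds from (F2) and $V\le 0$: $\cI(v,w)\ge c_2\int_{\R^3}\Phi(|v+w|)\,dx$; if $\|(v,w)\|\to\infty$ while $\|v\|_\cD$ remains bounded, then $|v|_\Phi$ is bounded (by the embedding $\V\hookrightarrow L^\Phi$ from Lemma \ref{defof}) and $|w|_\Phi\to\infty$, so $|v+w|_\Phi\to\infty$ and Lemma \ref{AllProp}(v) gives $\int\Phi(|v+w|)\to\infty$. The main obstacle will be (I6), the mountain-pass geometry, where (V) genuinely enters. On $\{\|v\|_\cD=r\}\subset\V$ I would estimate
\[
\J(v)\ge\frac12\Big(1-\frac{|V|_{3/2}}{S}\Big)\|v\|_\cD^2-\int_{\R^3}F(x,v)\,dx
\]
using the Sobolev embedding $\cD^{1,2}\hookrightarrow L^6$ with constant $S$ and H\"older on the $V$-term; the coefficient is strictly positive by (V). Since $f(x,0)=0$ (a consequence of (F2) and $\Phi'(0^+)=0$), integrating $f$ gives $F(x,v)\le c_1\Phi(|v|)$, and (N2') combined with the Sobolev embedding yields $\int_{\R^3}F(x,v)\,dx\le C\|v\|_\cD^6=o(\|v\|_\cD^2)$ as $\|v\|_\cD\to 0$, which delivers (I6) for $r$ small enough.

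Having verified (I1)--(I8), Theorem \ref{ThLink1}(a) furnishes the desired Cerami sequence $(v_n)\subset\V$ for $\wt\J$ at the level $c_\cM\ge a>0$, and Theorem \ref{ThLink1}(b) identifies $c_\cM=c_\cN$, completing the proof.
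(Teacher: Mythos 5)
Your proposal is correct and follows essentially the same route as the paper: verifying (I1)--(I3) and (I5) via convexity of $\cI$ together with Lemma \ref{LemConvWeakIpliesStrong}, (I4) via (F2) and Lemma \ref{AllProp}(v), (I6) via the H\"older/Sobolev estimate with $|V|_{3/2}<S$ and $F(x,v)\le c_1\Phi(|v|)\le C|v|^6$ from (N2'), (I7) via Lemma \ref{lem:supQ}, (I8) via Proposition \ref{Propuv_N}, and then invoking Theorem \ref{ThLink1} to obtain the Cerami sequence at level $c_\cM=c_\cN\ge a>0$. This matches the paper's argument in both structure and detail.
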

\begin{proof}
	Setting $X:=\V\times\W$,
	$X^+:=\V\times\{0\}$ and $\tX:=\{0\}\times\V$ we check assumptions (I1)--(I8) for the functional $\J\colon X\to\R$ 
	given by
	$$\J(v,w)=\frac{1}{2}\|v\|^2_\D-\cI(v,w)$$
	(cf. \eqref{eqJ} and \eqref{DefOfXi}). Recall 
	\[
	\|(v,w)\|:=\bigl(\|v\|_{\cD}^2+|w|_{\Phi}^2\bigr)^{\frac{1}{2}}, \quad \text{where }  \|v\|_{\cD} = |\nabla v|_2.
	\]
	Convexity and differentiability of $\cI$, (F2), and Lemma \ref{LemConvWeakIpliesStrong} yield:
	\begin{itemize}
		\item[(I1)] $\cI|_{\V\times\W}\in \cC^1(\V\times\W,\R)$ and $\cI(v,w)\geq \cI(0,0)=0$ for any $(v,w)\in \V\times\W$.
		\item[(I2)] If $v_n\to v$ in $\V$, $w_n\weakto w$ in $\W$,  then $\displaystyle\liminf_{n\to\infty}\cI(v_n,w_n)\geq \cI(v,w)$.
		\item[(I3)] If $v_n\to v$ in $\V$, $w_n\weakto w$ in $\W$ and $\cI(v_n,w_n)\to \cI(v,w)$, then $(v_n,w_n)\to (u,w)$.
	\end{itemize}
	Moreover,
	\begin{itemize}
		\item[(I6)] There exists $r>0$ such that $\inf_{\|v\|_\D=r}\J(v,0)>0$.
	\end{itemize}
Take $v\in\cV$ and
observe that 
\begin{eqnarray*}
	\cJ(v)&\geq& \frac12\int_{\R^3} |\curlop v|^2+V(x)|v|^2\,dx-c\int_{\R^3}|v|^6\,dx\\
	&\geq& \frac12\big(1- |V|_{\frac32}S^{-1}\big)\int_{\R^3} |\curlop v|^2\,dx -cS^{-3}\Big(\int_{\R^3} |\nabla v|^2\,dx\Big)^3\\
		&=& \frac12\big(1- |V|_{\frac32}S^{-1}\big)\int_{\R^3} |\curlop v|^2\,dx -cS^{-3}\Big(\int_{\R^3} |\curlop v|^2\,dx\Big)^3
\end{eqnarray*}
Hence, the inequality $|V|_{\frac32}<S$ implies that there is $r>0$ such that
(I6) is satisfied.
	It is easy to verify using (F2) and (v) of Lemma \ref{AllProp} that
	\begin{itemize}
		\item[(I4)] $\|v\|_\D+\cI(v,w)\to\infty$ as $\|(v,w)\|\to\infty$.
	\end{itemize}
	Hence also
	\begin{itemize}
	\item[(I5)] If $(v,w)\in\cM$, then $\cI(v,w)<\cI(v,w+\psi)$ for any $\psi\in \cW\setminus\{0\}$
	\end{itemize}
	holds by strict convexity of $u\mapsto \int_{\R^3}-V|u|^2+F(x,u)\,dx$.
	Next we  prove 
	\begin{itemize}
		\item[(I7)] $\cI(t_n(v_n,w_n))/t_n^2\to\infty$ if $t_n\to\infty$ and $v_n\to v$ for some $v\neq 0$ as $n\to\infty$.
	\end{itemize}
	Observe that from (F2)
\begin{equation}\label{eq:I7check}
\frac{\cI\bigl(t_n(v_n,w_n)\bigr)}{t_n^2} \ge \frac{1}{t_n^2} \int_{\R^3} F\bigl(x,t_n(v_n+w_n)\bigr) \, dx \ge c_2 \frac{1}{t_n^2} \int_{\R^3} \Phi(t_n|v_n+w_n|) \, dx,
\end{equation}
so the statement follows from Lemma \ref{lem:supQ}.
Finally, Proposition \ref{Propuv_N} and a simple computation show that
	\begin{itemize}
		\item[(I8)]
		$\frac{t^2-1}{2}\cI'(v,w)[(v,w)]+t\cI'(v,w)[(0,\psi)]+\cI(v,w)-\cI(tv,tw+\psi)\leq 0$
		for any $t\geq 0$, $v\in\V$ and $w,\psi\in\W$.
	\end{itemize}
Recall that \textcolor{red}{if} $(v_n)$ is a bounded Cerami sequence for $\tJ$, then $(m(u_n))\subset \cM$ is a bounded Cerami sequence for $\cJ$. Therefore applying Theorem \ref{ThLink1} we obtain the last conclusion.
\end{proof}

\begin{Lem}[\!\!\cite{MSchSz}]\label{LemDefofW}$\hbox{}$\\
	$(a)$ For any $v\in L^{\Phi}$ there is a unique $w(v)\in \cW$ such that
	\begin{equation*}
		\cI(v,w(v))=\inf_{w\in\W}\cI(v,w).
	\end{equation*}
	Moreover, $w\colon L^{\Phi}\to\W$  is continuous.\\
	$(b)$ $w$ maps bounded sets into bounded sets and $w(0)=0$.
\end{Lem}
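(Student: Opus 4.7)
The plan is to apply the direct method of the calculus of variations to $w\mapsto \cI(v,w)$ on $\cW$ and then promote weak convergence of minimizers to strong convergence using Lemma \ref{LemConvWeakIpliesStrong}. Note that $\cW$ is a closed subspace of $L^\Phi$, which is reflexive because both $\Phi$ and $\Psi$ satisfy the $\Delta_2$-condition (use (N1) and Lemma \ref{AllProp}(i)); thus bounded sequences in $\cW$ have weakly convergent subsequences.

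First I would fix $v\in L^\Phi$ and verify, for the map $w\mapsto \cI(v,w)$ on $\cW$, the standard ingredients: strict convexity (inherited from that of $I$), continuity, weak lower semicontinuity (convex plus continuous), and coercivity. Coercivity comes from (F2):
\[
\cI(v,w)\ge c_2\int_{\R^3}\Phi(|v+w|)\,dx,
\]
combined with the triangle inequality $|v+w|_\Phi\ge |w|_\Phi-|v|_\Phi$ and Lemma \ref{AllProp}(v), which translates boundedness of the integral into boundedness of $|v+w|_\Phi$. The direct method then yields a minimizer $w(v)\in \cW$, unique by strict convexity.

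For the continuity of $v\mapsto w(v)$, let $v_n\to v$ in $L^\Phi$. Since $(v_n)$ is bounded in $L^\Phi$, the coercivity above is uniform in $n$, and the estimate $\cI(v_n,w(v_n))\le \cI(v_n,0)=O(1)$ bounds $(w(v_n))$ in $\cW$. Along any weakly convergent subsequence $w(v_n)\weakto w^*$, passing to the limit in $\cI(v_n,w(v_n))\le \cI(v_n,w)$ for $w\in\cW$ arbitrary (lower semicontinuity on the left, continuity of $\cI$ along $v_n\to v$ on the right) shows that $w^*$ minimizes $\cI(v,\cdot)$, hence $w^*=w(v)$ by uniqueness. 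Specializing $w=w(v)$ in the same inequality additionally gives $\limsup \cI(v_n,w(v_n))\le \cI(v,w(v))$, so $\cI(v_n,w(v_n))\to \cI(v,w(v))$, i.e.\ $I(v_n+w(v_n))\to I(v+w(v))$. Since also $v_n+w(v_n)\weakto v+w(v)$ in $L^\Phi$, Lemma \ref{LemConvWeakIpliesStrong} upgrades this to strong convergence of the sums in $L^\Phi$, and subtracting $v_n\to v$ yields $w(v_n)\to w(v)$ in $\cW$. A standard subsequence argument extends this to the full sequence.

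Part (b) is then immediate. If $|v|_\Phi\le M$, then $\cI(v,w(v))\le \cI(v,0)$ is uniformly bounded on this set, so the same coercivity bound gives $|w(v)|_\Phi\le C(M)$; and $w(0)=0$ follows from $\cI(0,w)\ge c_2\int_{\R^3}\Phi(|w|)\,dx\ge 0$ with equality only at $w=0$. The one subtle step is promoting weak to strong convergence of $w(v_n)$: this cannot be done on $\cW$ directly, but only after passing to the sum $v_n+w(v_n)$ and invoking Lemma \ref{LemConvWeakIpliesStrong}, for which the sandwich identity $\cI(v_n,w(v_n))\to\cI(v,w(v))$ is essential.
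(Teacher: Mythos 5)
Your argument is correct and is essentially the proof the paper relies on: the lemma is quoted from \cite{MSchSz}, where it is likewise obtained by the direct method (coercivity via (F2) and Lemma \ref{AllProp}, weak lower semicontinuity and strict convexity of $\cI(v,\cdot)$ on the weakly closed subspace $\cW$), with continuity of $v\mapsto w(v)$ upgraded from weak to strong convergence exactly through the energy-convergence sandwich and Lemma \ref{LemConvWeakIpliesStrong}. No gaps beyond routine details (e.g.\ that $I$ maps bounded sets to bounded sets, which follows from (F2), $\Delta_2$, and Lemma \ref{lem:V}(iii)).
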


\section{Proof of Theorem \ref{ThMain}}\label{sec:T1}

In this Section we assume that (N2) is satisfied. Since there is no compact embedding of $\V$ into $L^{\Phi}$, we cannot expect that the Palais--Smale or Cerami condition is satisfied. However, in view of  \cite[Lemma 1.5]{MederskiZeroMass} the following
variant of Lions' lemma holds.

\begin{Lem}\label{lem:Conv}
	Suppose that   $(v_n)\subset \D$ is bounded and for some $r>\sqrt{3}$
	\begin{equation}\label{eq:LionsCond11}
	\sup_{y\in \Z^3}\int_{B(y,r)}|v_n|^2\,dx\to 0 \quad\hbox{as } n\to\infty.
	\end{equation}
	Then  
	$$\int_{\R^3} \Phi(|v_n|)\, dx\to 0\quad\hbox{as } n\to\infty.$$
\end{Lem}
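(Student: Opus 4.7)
The plan is to dominate $\Phi$ pointwise by the sum of a critical term (controlled by Sobolev) and a subcritical term (for which a Lions-type vanishing lemma applies), and then to let $\eps\to 0^+$ in the resulting bound.

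First I would exploit (N2) to establish the pointwise estimate: for every $\eps>0$ and every fixed $p\in(2,6)$, there exists $C_\eps>0$ such that
$$
\Phi(t)\le \eps\, t^6 + C_\eps\, t^p \qquad \text{for all } t\ge 0.
$$
Indeed, (N2) supplies $0<a_\eps<b_\eps$ with $\Phi(t)/t^6<\eps$ for $t\notin[a_\eps,b_\eps]$, while on $[a_\eps,b_\eps]$ one has $\Phi(t)\le \Phi(b_\eps)\le \bigl(\Phi(b_\eps)/a_\eps^p\bigr)\, t^p$ since $t^p\ge a_\eps^p$ there. Integrating and invoking the Sobolev inequality $|v|_6^6\le S^{-3}\|v\|_\cD^6$,
$$
\int_{\R^3}\Phi(|v_n|)\,dx\le \eps\, S^{-3}\sup_n\|v_n\|_\cD^6 + C_\eps\int_{\R^3}|v_n|^p\,dx.
$$
Thus it suffices to prove $\int_{\R^3}|v_n|^p\,dx\to 0$, after which the conclusion follows by letting $\eps\to 0^+$.

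The $L^p$-vanishing itself is a Lions-type argument adapted to the zero-mass space $\cD$. I would cover $\R^3$ by the disjoint unit cubes $\{Q_y\}_{y\in\Z^3}$, each contained in $B(y,\sqrt 3)\subset B(y,r)$ (which is possible because $r>\sqrt 3$), and interpolate on each cube between $L^2(Q_y)$ and $L^6(Q_y)$:
$$
\int_{Q_y}|v_n|^p\,dx\le \Bigl(\int_{Q_y}|v_n|^2\,dx\Bigr)^{(6-p)/4}\Bigl(\int_{Q_y}|v_n|^6\,dx\Bigr)^{(p-2)/4}.
$$
Using $\int_{Q_y}|v_n|^2\,dx\le s_n:=\sup_{z\in\Z^3}\int_{B(z,r)}|v_n|^2\,dx$, the first factor is bounded by $s_n^{(6-p)/4}$, and the hypothesis yields $s_n\to 0$. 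It remains to show that $\sum_{y\in\Z^3}\bigl(\int_{Q_y}|v_n|^6\,dx\bigr)^{(p-2)/4}$ is bounded uniformly in $n$: for this I would use the cube-wise Sobolev embedding $H^1(Q_y)\hookrightarrow L^6(Q_y)$ to get $\int_{Q_y}|v_n|^6\,dx\le C_1\bigl(s_n^3+|\nabla v_n|_{L^2(Q_y)}^6\bigr)$, followed by a Hölder manipulation that absorbs $\sum_y|\nabla v_n|_{L^2(Q_y)}^2\le \|v_n\|_\cD^2$, which is bounded by hypothesis.

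The main obstacle is precisely this last uniform control of the sum $\sum_y\bigl(\int_{Q_y}|v_n|^6\,dx\bigr)^{(p-2)/4}$. Because the exponent $(p-2)/4<1$, the naive inequality $\sum x_y^\alpha\le(\sum x_y)^\alpha$ fails, and because $(v_n)$ is only bounded in $\cD$ and not a priori in $L^2(\R^3)$, the classical $H^1$-Lions vanishing lemma does not apply verbatim. Overcoming this requires trading the missing global $L^2$-bound for the available global $\|\cdot\|_\cD$-bound by means of the Sobolev inequality on each cube and the bounded overlap of the covering — this is the content of \cite[Lemma 1.5]{MederskiZeroMass}.
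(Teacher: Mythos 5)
Your reduction to the claim that $\int_{\R^3}|v_n|^p\,dx\to 0$ for a fixed $p\in(2,6)$ is where the argument genuinely breaks down: that intermediate claim is false under the hypotheses of the lemma. Take $0\ne v\in\cC_0^\infty(\R^3,\R^3)$ and set $v_n(x):=\lambda_n^{1/2}v(\lambda_n x)$ with $\lambda_n\to 0^+$. Then $|\nabla v_n|_2=|\nabla v|_2$, so $(v_n)$ is bounded in $\cD$, and $\int_{B(y,r)}|v_n|^2\,dx=\lambda_n^{-2}\int_{B(\lambda_n y,\lambda_n r)}|v|^2\,dx\le C\lambda_n\to 0$ uniformly in $y$, so \eqref{eq:LionsCond11} holds; yet $\int_{\R^3}|v_n|^p\,dx=\lambda_n^{p/2-3}|v|_p^p\to\infty$ for every $p<6$. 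The root cause is that your (correct) splitting $\Phi(t)\le\eps t^6+C_\eps t^p$ uses only the limit at infinity in (N2) and discards the supercritical behaviour at $0$; the comparison function $|t|^p$ itself violates the first limit in (N2), and it is exactly that limit which rules out ``spreading'' sequences as above. This is also why the cube summation cannot be repaired: the term produced by $s_n$ is constant over infinitely many unit cubes and its sum diverges, while the classical Lions scheme (choosing $p=10/3$ so that the cube-wise exponent of the $H^1$-norm equals $2$) needs a global $L^2$-bound, which a $\cD$-bounded sequence does not have. Finally, deferring the missing step to \cite[Lemma 1.5]{MederskiZeroMass} does not work: that lemma is essentially the statement of Lemma \ref{lem:Conv} itself (valid precisely because $\Phi$ satisfies \emph{both} limits in (N2)) and it does not assert $L^p$-vanishing, which, as shown, fails.

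The missing idea --- and the content of the cited lemma, which the paper invokes as a black box rather than reproving --- is to use both limits in (N2) and never to dominate $\Phi$ by a global power on all of $\R^3$. Given $\eps>0$, choose $0<a_\eps<b_\eps$ with $\Phi(t)\le\eps t^6$ for $t\notin[a_\eps,b_\eps]$, so that the contribution outside $\Om_n:=\{a_\eps\le|v_n|\le b_\eps\}$ is at most $\eps|v_n|_6^6\le C\eps$, and treat $\Om_n$ directly: by Chebyshev and Sobolev, $|\Om_n|\le a_\eps^{-6}|v_n|_6^6$ is uniformly bounded, and a Lipschitz truncation $u_n$ of $v_n$ (vanishing where $|v_n|\le a_\eps/2$, capped at level comparable to $b_\eps$, with $|u_n|\le C|v_n|$ and $|\nabla u_n|\le C|\nabla v_n|$) is bounded in $H^1(\R^3,\R^3)$, because its support lies in $\{|v_n|\ge a_\eps/2\}$, a set of uniformly bounded measure. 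This truncation inherits \eqref{eq:LionsCond11}, so the classical $H^1$ Lions lemma applies to it and yields $|\Om_n|\le a_\eps^{-p}\int_{\R^3}|u_n|^p\,dx\to 0$, hence $\int_{\Om_n}\Phi(|v_n|)\,dx\le\Phi(b_\eps)|\Om_n|\to 0$; letting $\eps\to 0$ concludes. Your interpolation-only scheme cannot reproduce this, because it never exploits the finite measure of the intermediate set $\Om_n$.
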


Let $m(v):=(v,w(v))\in\cM$ for $v\in\cV$. Then in view of Lemma \ref{LemDefofW} (a), $m\colon\cV\to\cM$ is continuous.
The following lemma implies that any Cerami sequence of $\J$ in $\cM$ and any Cerami sequence of $\cJ\circ m$
are bounded.

\begin{Lem}\label{LemCoercive}
Let $\beta>0$. There exists $M_\beta>0$ with the property that, if $(v_n)\subset \cV$ is such that $(\J\circ m)(v_n)\leq\beta$ and $(1+\|v_n\|)(\J\circ m)'(v_n)\to 0$ as $n\to\infty$, then  $\limsup_{n\to\infty} \|v_n\| \le M_\beta$.
\end{Lem}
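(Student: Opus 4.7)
The plan is a contradiction argument. Assume along a subsequence that $t_n := \|v_n\|_\D \to \infty$ and set $\tilde v_n := v_n/t_n$ and $u_n := m(v_n) = v_n + w(v_n)$. The first step is to extract a one-sided bound from the Cerami condition. Using $m(v_n)\in\cM$ to cancel the $(0,w(v_n))$-component of the derivative, one obtains
\[
\tJ'(v_n)[v_n] = \J'(m(v_n))[(v_n,w(v_n))] = t_n^2 + \int_{\R^3} V|u_n|^2\,dx - \int_{\R^3}\langle f(x,u_n),u_n\rangle\,dx,
\]
and the assumption $(1+\|v_n\|)(\J\circ m)'(v_n)\to 0$ forces this to be $o(1)$. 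Combined with $(F3)$ and $V\le 0$ from $(V)$, this yields the key upper bound
\[
\frac{1}{t_n^2}\int_{\R^3}F(x,u_n)\,dx \le \frac{1}{2}+o(1).
\]

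The second step is a Lions-type alternative for $(\tilde v_n)\subset\cD$. Either (non-vanishing) there are $\delta,r>0$ and $(y_n)\subset\Z^3$ with $\int_{B(y_n,r)}|\tilde v_n|^2\,dx\ge\delta$, or (vanishing) $\sup_{y\in\Z^3}\int_{B(y,r)}|\tilde v_n|^2\,dx\to 0$ for every $r>\sqrt 3$. In the non-vanishing case I would set $\hat v_n := \tilde v_n(\cdot+y_n)\in\cV$ and $\omega_n := w(v_n)(\cdot+y_n)/t_n\in\cW$ (both $\cV$ and $\cW$ are translation invariant) and extract a subsequence with $\hat v_n\rh\hat v\neq 0$ in $\cD$. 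Lemma \ref{lem:supQ} applied to $(\hat v_n,\omega_n)$ with divergence $t_n\to\infty$, together with the translation invariance of $\int\Phi(|\cdot|)\,dx$, gives $t_n^{-2}\int\Phi(|u_n|)\,dx\to\infty$. The lower bound $F\ge c_2\Phi$ from $(F2)$ then forces $t_n^{-2}\int F(x,u_n)\,dx\to\infty$, contradicting the bound above.

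For the vanishing case one may assume, up to a subsequence, $\tilde v_n\rh 0$ in $\cD$. For every fixed $s>0$, Lemma \ref{lem:Conv} applied to the bounded, vanishing sequence $(s\tilde v_n)$ yields $\int\Phi(s|\tilde v_n|)\,dx\to 0$, hence $\int F(x,s\tilde v_n)\,dx\to 0$ by $(F2)$; a standard density argument using $V\in L^{3/2}(\R^3)$ (provided by Lemma \ref{lem:V}) and $\tilde v_n\to 0$ in $L^2_{\mathrm{loc}}$ gives $\int V|\tilde v_n|^2\,dx\to 0$. Since $w(s\tilde v_n)$ maximises $\J(s\tilde v_n,\cdot)$ on $\cW$, one has
\[
\tJ(s\tilde v_n)\ge \J(s\tilde v_n,0)=\tfrac{s^2}{2}+\tfrac{s^2}{2}\int V|\tilde v_n|^2\,dx-\int F(x,s\tilde v_n)\,dx\longrightarrow \tfrac{s^2}{2}.
\]
On the other hand, Proposition \ref{Propuv_N} applied at $(v_n,w(v_n))$ with $t=s/t_n$ and $\psi=w(s\tilde v_n)-(s/t_n)w(v_n)\in\cW$ (whose contribution vanishes because $m(v_n)\in\cM$) gives
\[
\tJ(s\tilde v_n)\le \tJ(v_n)+\frac{(s/t_n)^2-1}{2}\tJ'(v_n)[v_n]\le \beta+o(1).
\]
Letting $s$ be arbitrarily large forces $s^2/2\le\beta$ for every $s>0$, a contradiction.

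The main obstacle is the non-periodicity of $V$: the translated sequence in the non-vanishing case is no longer a Cerami sequence, and neither $\int V|u_n|^2\,dx$ nor $\int F(x,u_n)\,dx$ is translation invariant. The fix is to translate only the divergence-free part (in order to produce a nonzero weak limit and apply Lemma \ref{lem:supQ}) and to extract the upper bound $\int F/t_n^2 \le \tfrac{1}{2}+o(1)$ from the untranslated sequence, bridging the two streams of estimates via the pointwise inequality $F\ge c_2\Phi$ from $(F2)$.
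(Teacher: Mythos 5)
Your proposal is correct and follows essentially the same route as the paper: normalize $\bar v_n=v_n/\|v_n\|_\cD$, run the vanishing/non-vanishing dichotomy, use Lemma \ref{lem:Conv} together with Proposition \ref{Propuv_N} (exploiting $m(v_n)\in\cM$ and the Cerami condition) to force $s^2/2\le\beta$ in the vanishing case, and use Lemma \ref{lem:supQ} on the translated sequence with (F2)--(F3) to contradict the bound $t_n^{-2}\int_{\R^3}F(x,u_n)\,dx\le\tfrac12+o(1)$ in the non-vanishing case. Your variations (choosing $\psi=w(s\tilde v_n)-(s/t_n)w(v_n)$ instead of $-t_nw_n$, and handling the $V$-term explicitly via $V\in L^{3/2}$ and $L^2_{loc}$-convergence) are cosmetic and, if anything, spell out a step the paper leaves implicit.
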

\begin{proof}
If no finite bound $M_\beta$ exists, then we find a sequence $(v_n)\subset\cV$ such that 
$\|v_n\|\to\infty$ as $n\to\infty$, $(\J\circ m)(v_n)\le \beta$, and $(1+\|v_n\|)(\J\circ m)'(v_n)\to 0$ as $n\to\infty$. Since $w_n=w(v_n)$, $\|(v_n,w_n)\|\to\infty$ if and only if $\|v_n\|_\D\to\infty$.
Let $\bar{v}_n:=v_n/\|v_n\|_\D$ and $\bar{w}_n:=w_n/\|v_n\|_\D$. 
Assume 
$$\lim_{n\to\infty}\sup_{y\in\Z^3}\int_{B(y,r)}|\bar{v}_n|^2\,dx=0$$
for some fixed $r>\sqrt{3}$. 
By Lemma \ref{lem:Conv}, $\lim_{n\to\infty}\int_{\R^3}\Phi(|\bar{v}_n|)\,dx=0$, and arguing similarly as \cite{MSchSz}, we obtain a contradiction. More precisely, recalling $\cJ'(v_n,w_n)[(0,w_n)]=0$,
Proposition \ref{Propuv_N} with $t_n=s/\|v_n\|_\D$ and $\psi_n=-t_nw_n$ implies that for every $s>0$,
\begin{eqnarray*}
\beta &\ge&\limsup_{n\to\infty}\J(v_n,w_n) \\ &\ge&\limsup_{n\to\infty}\J(s\bar{v}_n,0)-\lim_{n\to\infty}\J'(v_n,w_n)\Big[\Big(\frac{t_n^2-1}{2}v_n,-\frac{t_n^2+1}{2}w_n\Big)\Big]=\limsup_{n\to\infty}\J(s\bar{v}_n,0)\\
&\overset{(F2)}{\ge}&\frac{s^2}{2}-\lim_{n\to\infty}c_1\int_{\R^3}\Phi(s|\bar v_n|)\,dx
 =\frac{s^2}{2}
\end{eqnarray*}
which is impossible. Hence $\liminf_{n\to\infty}\int_{B(y_n,r)}|\bar{v}_n|^2\,dx>0$ for some sequence $(y_n)\subset\Z^3$ and, up to a subsequence, we may assume that
$$\int_{B(0,r)}|\bar{v}_n(x+y_n)|^2\,dx\geq c>0$$
for some constant $c$. This implies that up to a subsequence, $\bar{v}_n(\cdot+y_n)\rightharpoonup\bar v\ne 0$ in $\D$, $\bar{v}_n(\cdot+y_n)\to\bar{v}$ in $L^2_{loc}(\R^3,\R^3)$, and $\bar{v}_n(\cdot+y_n)\to\bar v$ a.e. in $\R^3$ for some $\bar v\in\cD$.
By (F3),
\[
2\J(v_n,w_n) - \J'(v_n,w_n)[(v_n,w_n)] = \int_{\R^3}(\langle f(x,v_n+w_n), v_n+w_n\rangle - 2F(x,v_n+w_n))\,dx \ge 0,
\]
so $\cJ(v_n,w_n)$ is bounded below and
\[\begin{split}
\alpha & \le \frac{\J(v_n,w_n)}{\|v_n\|_\D^2}\le \frac{1}{2}\|\bar{v}_n\|_\D^2-c_2\int_{\R^3}\frac{\Phi(|v_n+w_n|)}{\|v_n\|_\cD^2}\,dx\\
& = \frac12 - c_2\int_{\R^3} \Phi\left(\|v_n\|_\cD\frac{|v_n(\cdot+y_n) + w_n(\cdot+y_n)|}{\|v_n\|_\cD}\right) \frac{1}{\|v_n\|_\cD^2} \, dx
\end{split}\]
for some constant $\alpha$ (cf. \eqref{eq:I7check} for the second inequality) and the integral on the right-hand side above tends to $\infty$ due to Lemma \ref{lem:supQ}, a contradiction.
\end{proof}

Now we show the weak-to-weak$^*$ convergence in $\cM$.

\begin{Prop} \label{prop}
	If $v_n\rh v$ in $\cD$, then $w(v_n)\rh w(v)$ in $\cW$ and, after passing to a subsequence, $w(v_n)\to w(v)$ a.e. in $\R^3$.
\end{Prop}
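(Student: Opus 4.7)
The aim is to show that $w(v_n) \rh w(v)$ weakly in $\cW$, with a.e.\ convergence along a subsequence. The strategy is threefold: first extract a weak subsequential limit $w_0$; then promote the weak convergence of $u_n := v_n + w(v_n)$ to strong convergence in $L^\Phi$ via Lemma \ref{LemConvWeakIpliesStrong}, yielding a.e.\ convergence; finally identify $w_0 = w(v)$ by passing to the limit in the Euler--Lagrange equation and invoking uniqueness from Lemma \ref{LemDefofW}(a).

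\textbf{Step 1 (Boundedness and weak extraction).} Since $v_n \rh v$ in $\cD$, the sequence is bounded in $\cD$ and, through the continuous embedding $\cV \hookrightarrow L^\Phi$ supplied by Lemma \ref{defof}, also in $L^\Phi$. Lemma \ref{LemDefofW}(b) gives boundedness of $(w(v_n))$ in $\cW$. Reflexivity of $L^\Phi$ (from $\Phi,\Psi\in\Delta_2$) and weak closedness of the linear subspace $\cW$ allow us, up to a subsequence, to find $w_0 \in \cW$ with $w(v_n) \rh w_0$ in $\cW$. Put $u_n := v_n+w(v_n)$ and $u_0 := v+w_0$, so $u_n \rh u_0$ in $L^\Phi$. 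The compact embedding $\cD \hookrightarrow L^2_{\mathrm{loc}}(\R^3)$ combined with a diagonal extraction delivers, along a further subsequence, $v_n \to v$ a.e.\ in $\R^3$.

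\textbf{Step 2 (Strong convergence of $u_n$).} I would show $I(u_n) \to I(u_0)$, so that Lemma \ref{LemConvWeakIpliesStrong} yields $u_n \to u_0$ in $L^\Phi$ and, up to a further subsequence, $u_n \to u_0$ a.e.; combined with $v_n \to v$ a.e., this gives $w(v_n) = u_n - v_n \to u_0 - v = w_0$ a.e. The inequality $I(u_0) \le \liminf I(u_n)$ follows from the weak lower semicontinuity of the convex continuous functional $I$. For the reverse, the minimality of $w(v_n)$ on the affine space $v_n+\cW$ gives $I(u_n) \le I(v_n+w_0)$, and a Brezis--Lieb type splitting in the Orlicz setting (in the spirit of \cite[Lemma 4.3]{MSchSz}), applied to the a.e.\ convergent sequence $v_n+w_0 \to v+w_0$, provides
\[
I(v_n + w_0) = I(u_0) + I(v_n - v) + o(1).
\]
The resulting pinching $I(u_0) + o(1) \le I(u_n) \le I(u_0) + I(v_n-v) + o(1)$ combined with the strict convexity of $I$ (guaranteed, under the hypotheses of Theorem \ref{ThMain}(a), either by $V<0$ a.e.\ with convex $F$ or by uniformly strictly convex $F$ when $V=0$) forces $I(v_n-v) \to 0$, closing the $\limsup$ bound.

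\textbf{Step 3 (Identification and conclusion).} With a.e.\ convergence $u_n\to u_0$ in hand, the growth bound $|f(x,u)| \le c_1\Phi'(|u|)$ together with Lemma \ref{lem:phd} yields uniform integrability of $(f(\cdot,u_n))$ in $L^\Psi$; Vitali's theorem then gives $f(\cdot,u_n) \to f(\cdot,u_0)$ in $L^\Psi$, while $V u_n \to V u_0$ in $L^\Psi$ by Lemma \ref{lem:V}(iii). Passing to the limit in
\[
\int_{\R^3} V(x)\langle u_n, \psi\rangle\,dx = \int_{\R^3} \langle f(x,u_n), \psi\rangle\,dx, \qquad \psi \in \cW,
\]
shows that $(v,w_0) \in \cM$, so by uniqueness in Lemma \ref{LemDefofW}(a), $w_0 = w(v)$. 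Independence from the chosen subsequence promotes the weak convergence to the entire original sequence, and the a.e.\ convergence holds along a subsequence. \textbf{The main obstacle} is the $\limsup$ inequality in Step 2: weak convergence in $L^\Phi$ does not by itself force the Brezis--Lieb residual $I(v_n-v)$ to vanish, and it is precisely the interplay between the minimality of $w(v_n)$ and the strict convexity of $I$ that extracts the required compactness and distinguishes the two cases of Theorem \ref{ThMain}(a).
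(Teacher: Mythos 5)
Your Step 2 contains a genuine gap, and in fact it tries to prove strictly more than the statement — global strong convergence of $u_n=v_n+w(v_n)$ in $L^\Phi$ — which is false in general. The pinching $I(u_0)+o(1)\le I(u_n)\le I(u_0)+I(v_n-v)+o(1)$ is fine, but neither it nor the strict convexity of $I$ forces $I(v_n-v)\to 0$: weak convergence $v_n\rh v$ allows mass to escape to infinity, and both inequalities are compatible with $I(v_n-v)$ staying bounded away from $0$. Concretely, in the case $V=0$ the functional $I$ and the space $\cW$ are $\Z^3$-translation invariant, so by uniqueness of the minimizer $w(\phi(\cdot-y))=w(\phi)(\cdot-y)$ for $y\in\Z^3$. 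Take $\phi\in\cV\setminus\{0\}$, $(y_n)\subset\Z^3$ with $|y_n|\to\infty$, and $v_n:=\phi(\cdot-y_n)\rh v=0$. Then $u_n=(\phi+w(\phi))(\cdot-y_n)$ has constant nonzero $L^\Phi$-norm (if $\phi+w(\phi)=0$ then $\phi\in\cV\cap\cW=\{0\}$ by Lemma \ref{defof}), $I(u_n)=I(\phi+w(\phi))>0$ and $I(v_n-v)=I(\phi)>0$ for every $n$, while $u_0=0$ and $I(u_0)=0$. So $I(u_n)\not\to I(u_0)$ and $u_n\not\to u_0$ in $L^\Phi$, even though the conclusion of the proposition does hold here ($w(\phi)(\cdot-y_n)\rh 0=w(0)$ and $\to 0$ a.e.\ along a subsequence, since it tends to $0$ in $L^\Phi_{loc}$). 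Consequently the a.e.\ convergence of $u_n$ that feeds the Vitali argument in Step 3 is not available, and the identification $w_0=w(v)$ collapses with it; the "main obstacle" you flag at the end is indeed a genuine obstruction, not a technical point that convexity can absorb.

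For comparison, the paper does not reprove the statement at all: it sets $\tilde F(x,u):=-\tfrac12 V(x)|u|^2+F(x,u)$, $\tilde f(x,u):=-V(x)u+f(x,u)$ and observes that the arguments of \cite[Proposition 5.2]{MSchSz} apply verbatim to $\tilde F$, $\tilde f$. Whatever route one takes, it cannot pass through global strong $L^\Phi$-convergence of $v_n+w(v_n)$, as the translation example shows; the convexity/minimality information has to be exploited locally (using the compact embedding of $\cD$ into $L^\Phi_{loc}$ and the a.e.\ convergence of $v_n$) to produce a.e.\ convergence of $w(v_n)$ first, and only then can one pass to the limit in $\cI'(v_n,w(v_n))[(0,\psi)]=0$ and use the uniqueness in Lemma \ref{LemDefofW}(a) — your Step 1 and the general shape of Step 3 are fine, but Step 2 must be replaced by an argument of this local type rather than by Lemma \ref{LemConvWeakIpliesStrong} applied globally.
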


\begin{proof}
	Let us define $\tilde{F}(x,u):=-\frac12V(x)|u|^2+F(x,u)$ and $\tilde{f}(x,u):=-V(x)u+f(x,u)$ and observe that we can apply arguments of \cite[Proposition 5.2]{MSchSz} to $\tilde{F}$ and  $\tilde{f}$.
\end{proof}

In general $\J'$ is not (sequentially) weak-to-weak$^*$ continuous,
however we show the weak-to-weak$^*$ continuity of $\J'$ for sequences on the topological manifold $\mathcal{M}$. Obviously, the same regularity holds for $\cE'$ and $\cM_{\cE}$.

\begin{Cor}\label{CorJweaklycont}
	If $(v_n,w_n)\in\mathcal{M}$ and $(v_n,w_n)\rightharpoonup (v_0,w_0)$ in $\V\times\W$ then $\J'(v_n,w_n)\rightharpoonup \J'(v_0,w_0)$, i.e.
	$$\J'(v_n,w_n)[(\phi,\psi)]\to \J'(v_0,w_0)[(\phi,\psi)]$$
	for any $(\phi,\psi)\in\V\times\W$.
\end{Cor}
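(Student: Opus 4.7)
The plan is to exploit Proposition \ref{prop} to upgrade weak convergence to a.e.\ convergence, which is the only way to pass the limit inside the nonlinear term of $\J'$. Since $(v_n,w_n)\in\cM$ means $w_n=w(v_n)$, the weak convergence $v_n\rh v_0$ in $\cD$ combined with Proposition \ref{prop} gives $w_n\rh w(v_0)$ in $\cW$, so $w_0=w(v_0)$, and moreover, after passing to a subsequence, $w_n\to w_0$ a.e.\ on $\R^3$. The Rellich--Kondrachov theorem applied to the bounded sequence $(v_n)\subset\cD$ yields, along a further subsequence, $v_n\to v_0$ in $L^2_{loc}(\R^3,\R^3)$ and a.e.\ on $\R^3$, so $v_n+w_n\to v_0+w_0$ a.e.\ and, by continuity of $f$ in the second variable, $f(x,v_n+w_n)\to f(x,v_0+w_0)$ a.e.

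Next I would establish that $(f(x,v_n+w_n))$ is bounded in $L^\Psi$. By (F2), $|f(x,v_n+w_n)|\le c_1\Phi'(|v_n+w_n|)$. Using monotonicity of $\Psi$ on $[0,\infty)$, the $\Delta_2$-condition on $\Psi$ (equivalent to $\Phi\in\nabla_2$ by Lemma \ref{AllProp} (i)), and Lemma \ref{lem:phd}, one obtains $\Psi(|f(x,v_n+w_n)|)\le C\Phi(|v_n+w_n|)$ pointwise. Since $\cV\hookrightarrow L^\Phi$ continuously, $(v_n+w_n)$ is bounded in $L^\Phi$; Lemma \ref{AllProp} (v) then yields that $\int_{\R^3}\Phi(|v_n+w_n|)\,dx$ is bounded, hence so is $\int_{\R^3}\Psi(|f(x,v_n+w_n)|)\,dx$, and a second application of Lemma \ref{AllProp} (v) gives boundedness of $(f(x,v_n+w_n))$ in the reflexive space $L^\Psi$ (reflexivity because both $\Phi,\Psi\in\Delta_2$). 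Combining boundedness in $L^\Psi$ with the a.e.\ convergence and uniqueness of weak limits, $f(x,v_n+w_n)\rh f(x,v_0+w_0)$ in $L^\Psi$ along the extracted subsequence, and a standard subsequence argument promotes this to the original sequence.

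Finally, I would pass to the limit in each of the three terms of $\J'(v_n,w_n)[(\phi,\psi)]$ for arbitrary $(\phi,\psi)\in\cV\times\cW$. The gradient term $\int_{\R^3}\langle \nabla v_n,\nabla\phi\rangle\,dx$ converges by definition of weak convergence in $\cD$. For the potential term, $\phi+\psi\in L^\Phi$ implies $V(\phi+\psi)\in L^\Psi$ by Lemma \ref{lem:V} (iii), while $v_n+w_n\rh v_0+w_0$ in $L^\Phi$, so by Orlicz duality that term converges as well (if $V=0$ this step is vacuous). The nonlinear term $\int_{\R^3}\langle f(x,v_n+w_n),\phi+\psi\rangle\,dx$ converges by the weak $L^\Psi$ convergence just established, since $\phi+\psi\in L^\Phi$. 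Summing, $\J'(v_n,w_n)[(\phi,\psi)]\to \J'(v_0,w_0)[(\phi,\psi)]$. The main obstacle is precisely the nonlinear term: weak convergence alone never suffices to pass the limit inside a genuinely nonlinear integrand, and the reason the conclusion holds on $\cM$ (but not on the full space) is that the constraint $w_n=w(v_n)$, combined with Proposition \ref{prop}, forces the a.e.\ convergence of $w_n$ that makes $f(x,v_n+w_n)\to f(x,v_0+w_0)$ pointwise.
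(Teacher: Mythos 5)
Your proof is correct and follows the paper's skeleton up to the final limit-passing step: like the paper, you use $(v_n,w_n)\in\cM$ and Lemma \ref{LemDefofW} to write $w_n=w(v_n)$, invoke Proposition \ref{prop} (together with local compactness of $\cD$) to get $v_n+w_n\to v_0+w_0$ a.e.\ with $w_0=w(v_0)$, and control $f(x,v_n+w_n)$ in $L^\Psi$ through (F2), the $\Delta_2$-condition for $\Psi$, and Lemma \ref{lem:phd}. Where you diverge is the mechanism for the nonlinear and potential terms. The paper fixes the test function $\phi+\psi$, uses the H\"older-type inequality of Lemma \ref{AllProp} (iii) together with the absolute continuity of the Orlicz norm of $\phi+\psi$ to obtain uniform integrability and tightness of the integrands, and concludes with the Vitali convergence theorem (the $V$-term is handled in the same stroke via Lemma \ref{lem:V} (iii)). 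You instead use reflexivity of $L^\Psi$ (both $\Phi,\Psi\in\Delta_2$ by (N1)) and the standard fact that a sequence bounded in $L^\Psi$ and convergent a.e.\ converges weakly to its a.e.\ limit, then pair with $\phi+\psi\in L^\Phi\cong(L^\Psi)^*$; for the $V$-term you pair the weakly convergent sequence $v_n+w_n$ in $L^\Phi$ with the fixed function $V(\phi+\psi)\in L^\Psi$ (Lemma \ref{lem:V} (iii)), which requires no a.e.\ convergence at all, and the subsequence-to-full-sequence promotion is legitimate because the limit functional is uniquely determined. Both routes are sound: yours gives a slightly cleaner treatment of the $V$-term and avoids Vitali, at the price of invoking duality/reflexivity of the Orlicz spaces and the identification of the weak limit with the a.e.\ limit (Mazur), whereas the paper's Vitali argument works directly from uniform integrability and does not rely on reflexivity.
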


\begin{proof}
From Lemma \ref{LemDefofW}$(a)$ we get $w_n=w(v_n)$, while in view of Proposition \ref{prop} we may assume $v_n+w_n\to v_0+w_0$ a.e. in $\R^3$ (where $w_0=w(v_0)$).
For any $(\phi,\psi)\in\V\times\W$ we have
\begin{eqnarray*}
	\J'(v_n,w_n)[(\phi,\psi)]-\J'(v_0,w_0)[(\phi,\psi)]&=&\int_{\R^3} \langle \nabla v_n-\nabla v_0,\nabla \phi\rangle \ ,dx\\
	&&+\int_{\R^3} V(x)\langle v_n+w_n-v_0-w_0,\phi+\psi\rangle \, dx\\
	&&-\int_{\R^3} \langle f(x,v_n+w_n)-f(x,v_0+w_0),\phi+\psi\rangle \, dx.
\end{eqnarray*}
For $\Om\subset\R^3$ measurable, in view of Lemma \ref{AllProp} (iii) we have
\[
\int_{\Om} |\langle f(x,v_n+w_n)-f(x,u_0+w_0),\phi+\psi\rangle|\,dx \le |f(x,v_n+w_n)-f(x,u_0+w_0)|_\Psi |\phi+\psi|_{L^\Phi(\Om)}
\]
and using (F2) and Lemma \ref{lem:phd}, $(|f(x,v_n+w_n)-f(x,u_0+w_0)|_\Psi )$ is bounded. Since the norm $|\cdot|_\Phi$ is absolutely continuous (cf. \cite[Definition III.4.2, Corollary III.4.5 and Theorem III.4.14]{RaoRen}) and $\int_{\Om}\Phi(|\phi+\psi|)\,dx\to 0$ as $\Om=\R^3\setminus B(0,n)$ and $n\to\infty$, we infer that 
$\langle f(x,v_n+w_n)-f(x,u_0+w_0),\phi+\psi\rangle$ is uniformly integrable and tight; likewise for $V\langle v_n+w_n-v_0-w_0,\phi+\psi\rangle$, but using Lemma \ref{lem:V} (iii) instead.
In view of the Vitali convergence theorem we obtain
\[
\J'(v_n,w_n)[(\phi,\psi)]-\J'(v_0,w_0)[(\phi,\psi)]\to 0.\qedhere
\]
\end{proof}

\begin{altproof}{Theorem \ref{ThMain} $(a)$}\mbox{}\\ 
	The existence of a Cerami sequence $(v_n,w_n)\subset \cM$ at the level $c_{\cN}$ follows from Proposition \ref{PropDefOfm(u)}, and this sequence is bounded by 
	Lemma \ref{LemCoercive}.\\ 
	\indent Suppose that $V=0$. 
	 If $|v_n|_\Phi\to 0$, then
	by (F2) and Lemma \ref{LemCoercive}, we have
	\[
	\begin{split}
		\|v_n\|_{\mathcal{D}}^2 = & \, J'(m(v_n))[(v_n,0)]
		+\int_{\R}\langle f(x,m(v_n)),v_n\rangle\,dx\le o(1)+\int_{\R}|f(x,m(v_n))||v_n|\,dx\\
		\le & \, o(1)+c_1\int_{\R}\Phi'(|m(v_n)|)|v_n|\,dx \le o(1)+ c_1|\Phi'(|m(v_n)|)|_\Psi|v_n|_{\Phi} \ \to\ 0
	\end{split}
	\]
	which gives $v_n\to 0$. This is impossible because $\J(m(v_n))\to c_\cN>0$. Hence by Lemma \ref{lem:Conv}
$
		\sup_{y\in \Z^3}\int_{B(y,r)}|v_n|^2\,dx 
$
is bounded away from $0$ and we find translations $(y_n)\subset\Z$ such that
\begin{equation}\label{eqVproof}
\int_{B(0,R)}|v_n(\cdot +y_n)|^2\,dx\ge \eps
\end{equation}
for some $\eps>0$. We find  $v\in\cV\setminus\{0\}$ such that
$(v_n(\cdot +y_n), w_n(\cdot +y_n)) \weakto (v,w)$ and $(v_n(\cdot +y_n), w_n(\cdot +y_n)) \to (v,w)$  a.e. in $\R^3$ along a subsequence.
Then by Fatou's lemma and (F3),
	\begin{eqnarray*}
		c_{\cN}&=&\lim_{n\to\infty}\cJ(v_n,w_n)=\lim_{n\to\infty}\Big(\cJ(v_n(\cdot +y_n),w_n(\cdot +y_n))\\
		&&-\frac12\cJ'(v_n(\cdot +y_n),w_n(\cdot +y_n))[(v_n(\cdot +y_n),w_n(\cdot +y_n))]\Big) \\
		&\geq&\cJ(v,w)-\frac12\cJ'(v,w)[(v,w)] =\cJ(v,w).
	\end{eqnarray*}
	Since $(v,w)\in\cN$,  $\J(v,w)=c_{\cN}$ and $u=v+w$ solves \eqref{eq}.\\ 
\indent 
	Now suppose that $(V)$ holds. Let $\cJ_0$, $\cM_0$, $\cN_0$, $c_{\cN_0}$, and $m_0$ denote the energy functional, the manifolds $\cM$ and $\cN$, the ground state energy, and the homeomorphism $\cV\to\cM$ in the case $V=0$. Let $u_0$ be the ground state obtained above. Observe that
	\begin{equation}\label{eq:CNN_0}
c_{\cN_0}=\cJ_0(u_0)\geq \cJ_0(m(u_0))>\cJ(m(u_0))\geq c_{\cN}.
	\end{equation}
	Suppose that passing to a subsequence $v_n\weakto 0$ and a.e. on $\R^3$. Then $m_0(v_n)\weakto 0$ and for every $R>0$ there holds
	$$ \int_{\R^3} V(x)|m_0(v_n)|^2 \, dx = \int_{B(0,R)} V(x)|m_0(v_n)|^2 \, dx + \int_{\R^3\setminus B(0,R)} V(x)|m_0(v_n)|^2 \, dx
	$$
	with $\lim_n \int_{B(0,R)} V(x)|m_0(v_n)|^2 \, dx = 0$ because, from \cite[Lemma 5.1]{MSchSz} and (N3), $\cD$ embeds compactly in $L^\Phi_{loc}(\R^3,\R^3) \hookrightarrow L^2_{loc}(\R^3,\R^3)$. We want to prove that $\int_{\R^3\setminus B(0,R)} V(x)|m_0(v_n)|^2 \, dx \to 0$ uniformly in $n$ as $R\to\infty$. From Lemma \ref{AllProp} (iii) and since $m_0(v_n)$ is bounded in $L^{\Phi}$, there exists $C>0$ such that for every $n$
	\[
	\int_{\R^3\setminus B(0,R)} V(x)|m_0(v_n)|^2 \, dx \le C|Vm_0(v_n)\chi_{\R^3\setminus B(0,R)}|_{\Psi}.
	\]
	Again from Lemma \ref{AllProp}, it suffices to prove that
	\[
	\lim_{R\to\infty} \int_{\R^3} \Psi\bigl(|V(x)m_0(v_n)|\chi_{\R^3\setminus B(0,R)}\bigr) \, dx = 0 \quad \text{uniformly in } n.
	\]
	Arguing as in Lemma \ref{lem:V}, there exists $C'>0$ such that for every $n$
	\[
	\int_{\R^3} \Psi\bigl(|V(x)m_0(v_n)|\chi_{\R^3\setminus B(0,R)}\bigr) \, dx \le C'|V\chi_{\R^3\setminus B(0,R)}|_{(\Phi\circ\Psi^{-1})^*}
	\]
	and, since $(\Phi\circ\Psi^{-1})^*\in\Delta_2$,
	\[
	\lim_{R\to\infty} |V\chi_{\R^3\setminus B(0,R)}|_{(\Phi\circ\Psi^{-1})^*} = 0
	\]
	from the dominated convergence theorem. Hence
	$$\cJ(u_n)\geq \cJ(m_0(u_n))= \cJ_0(m_0(u_n))-\frac12 \int_{\R^N}V(x)|m_0(u_n)|^2\,dx
	\geq c_{\cN_0}+o(1),$$
	which contradicts \eqref{eq:CNN_0}. Therefore $(v_n,w_n)\weakto (v,w)$ and a.e. on $\R^3$ along a subsequence and $v\neq 0$. Arguing as above we obtain that $\cJ(v,w)=c_{\cN}$ and $u=v+w$ solves \eqref{eq}.
\end{altproof}

Now we sketch the approach of \cite{MSchSz} and we apply Theorem \ref{Th:CrticMulti} $(b)$.
Recall that the group $G:=\Z^3$ acts isometrically by translations on $X=\cV\times\cW$ and $\cJ$ is $\Z^3$-invariant.
Let
$$\cK:=\big\{v\in \cV: (\cJ\circ m)'(u)=0\big\}$$
and suppose that $\cK$ consists of a finite number of distinct orbits. It is clear that $\Z^3$ acts discretely and hence satisfies the condition (G) in Section \ref{sec:criticaslpoitth}.
Then, it is easy to see that
$$\kappa:= \inf\big\{\|v-v'\|_{\D}:\J'\bigl(m(v)\bigr) = \J'\bigl(m(v')\bigr) = 0, v\ne v'\big\}>0.$$
We recall \cite[Lemma 6.1]{MSchSz}.

\begin{Lem}\label{Discreteness}
Let $\beta\ge c_{\cN}$ and suppose that $\cK$ has a finite number of distinct orbits.  
If $(u_n),(v_n)\subset\V$ are two Cerami sequences for $\J\circ m$ such that $0\le\liminf_{n\to\infty}\J\bigl(m(u_n)\bigr)\le \limsup_{n\to\infty}\J\bigl(m(u_n)\bigr)\le\beta$, $0\le\liminf_{n\to\infty}\J\bigl(m(v_n)\bigr)\le \limsup_{n\to\infty}\J\bigl(m(v_n)\bigr)\le\beta$ and $\liminf_{n\to\infty}\|u_n-v_n\|_{\D}< \kappa$, then $\lim_{n\to\infty}\|u_n-v_n\|_{\D}=0$.
\end{Lem}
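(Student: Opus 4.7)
My plan is to argue by contradiction, using a Lions-type dichotomy on $u_n-v_n$ together with the $\Z^3$-translation invariance. By Lemma \ref{LemCoercive}, both $(u_n)$ and $(v_n)$ are bounded in $\cV$, hence so is $(u_n-v_n)$. Suppose the conclusion fails. Using the hypothesis $\liminf_n\|u_n-v_n\|_{\cD}<\kappa$, pass to a subsequence along which $\|u_n-v_n\|_{\cD}\to\alpha$ for some $\alpha\in(0,\kappa)$. Now apply Lemma \ref{lem:Conv} to $u_n-v_n$: either this sequence vanishes in the $\Z^3$-translation sense, or it concentrates.

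\emph{Vanishing case.} Suppose $\sup_{y\in\Z^3}\int_{B(y,r)}|u_n-v_n|^2\,dx\to0$. Lemma \ref{lem:Conv} gives $|u_n-v_n|_\Phi\to0$. Since $(u_n-v_n)$ is bounded in $\cV$ and $(1+\|u_n\|_{\cD})(\cJ\circ m)'(u_n)\to 0$ (same for $v_n$), the quantity $\bigl[(\cJ\circ m)'(u_n)-(\cJ\circ m)'(v_n)\bigr][u_n-v_n]$ is $o(1)$. Expanding this via $(\cJ\circ m)'(v)[\phi]=\int\langle\nabla v,\nabla\phi\rangle+\int V\langle m(v),\phi\rangle-\int\langle f(x,m(v)),\phi\rangle\,dx$ reveals
\[
\|u_n-v_n\|_{\cD}^2 = o(1) + \int_{\R^3}\!\langle f(x,m(u_n))-f(x,m(v_n)),u_n-v_n\rangle\,dx - \int_{\R^3}\!V\langle m(u_n)-m(v_n),u_n-v_n\rangle\,dx.
\]
The $f$-integral is controlled via Lemma \ref{AllProp}(iii) and Lemma \ref{lem:phd} (the $L^\Psi$-norms of $f(x,m(u_n)),f(x,m(v_n))$ are bounded) paired with $|u_n-v_n|_\Phi\to 0$, while the potential term is handled by writing $m(u_n)-m(v_n)=(u_n-v_n)+(w(u_n)-w(v_n))$, using boundedness of $w(u_n),w(v_n)$ in $L^\Phi$ (Lemma \ref{LemDefofW}(b)) and applying Lemma \ref{lem:V}(iii) to get $|V(u_n-v_n)|_\Psi\to 0$. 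Both integrals vanish, so $\|u_n-v_n\|_{\cD}\to0$, contradicting $\alpha>0$.

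\emph{Non-vanishing case.} There exist $c>0$ and $y_n\in\Z^3$ with $\int_{B(y_n,r)}|u_n-v_n|^2\,dx\ge c$. Set $\tilde u_n:=u_n(\cdot+y_n)$ and $\tilde v_n:=v_n(\cdot+y_n)$; by $\Z^3$-invariance of $\cJ$ and the norms, these are again Cerami sequences for $\cJ\circ m$ with the same energy bounds, and $\|\tilde u_n-\tilde v_n\|_{\cD}=\|u_n-v_n\|_{\cD}\to\alpha$. Extract weak limits $\tilde u_n\weakto\bar u$ and $\tilde v_n\weakto\bar v$ in $\cV$. Because $\cV\subset\cD$ embeds compactly into $L^2_{loc}(\R^3,\R^3)$, $\tilde u_n-\tilde v_n\to\bar u-\bar v$ in $L^2(B(0,r),\R^3)$, so $\int_{B(0,r)}|\bar u-\bar v|^2\,dx\ge c$ and $\bar u\neq\bar v$. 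Proposition \ref{prop} gives $w(\tilde u_n)\weakto w(\bar u)$ and $w(\tilde v_n)\weakto w(\bar v)$, hence $m(\tilde u_n)\weakto m(\bar u)$ and $m(\tilde v_n)\weakto m(\bar v)$ in $\cV\times\cW$; Corollary \ref{CorJweaklycont} combined with the identity $(\cJ\circ m)'(v)=\cJ'(m(v))|_{\cV}$ (property (iv) of Proposition \ref{PropDefOfm(u)}) then forces $(\cJ\circ m)'(\bar u)=(\cJ\circ m)'(\bar v)=0$, i.e., $\bar u,\bar v\in\cK$. By weak lower semicontinuity of $\|\cdot\|_{\cD}$, $\|\bar u-\bar v\|_{\cD}\le\liminf_n\|\tilde u_n-\tilde v_n\|_{\cD}=\alpha<\kappa$, contradicting the definition of $\kappa$.

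The main obstacle is the non-vanishing case. Its delicate ingredient is the promotion of the weak limits $\bar u,\bar v$ to actual elements of $\cK$: this is not automatic since $\cJ'$ fails to be weak-to-weak$^*$ continuous globally, and relies on the manifold-restricted continuity of Corollary \ref{CorJweaklycont}, which in turn uses the Helmholtz-consistent weak continuity of $v\mapsto w(v)$ from Proposition \ref{prop}. In the vanishing case, the technical subtlety is checking that the $V$-contribution is dominated by $|u_n-v_n|_\Phi$, which is exactly the content of condition (V) via Lemma \ref{lem:V}(iii).
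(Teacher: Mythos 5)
The paper itself gives no proof of this lemma (it is recalled from \cite{MSchSz}), and your two-case Lions-type dichotomy is indeed the structure of the argument in that source. Both halves of your dichotomy are essentially sound: in the vanishing case, the identity obtained from $\bigl[(\cJ\circ m)'(u_n)-(\cJ\circ m)'(v_n)\bigr][u_n-v_n]=o(1)$ together with Lemma \ref{AllProp}(iii), Lemma \ref{lem:phd}, Lemma \ref{LemDefofW}(b) and Lemma \ref{lem:V}(iii) does give $\|u_n-v_n\|_{\cD}\to0$ along the considered subsequence; in the non-vanishing case, the identification of the weak limits as elements of $\cK$ via Proposition \ref{prop}, Corollary \ref{CorJweaklycont} and property (iv), followed by weak lower semicontinuity against $\kappa$, is correct. (One side remark: the translated sequences remain Cerami sequences only because $\cJ$ is $\Z^3$-invariant, i.e.\ in the $V=0$ setting in which this lemma is actually used; for $V\neq0$ satisfying (V) that step fails, so carrying the $V$-terms through the vanishing case while invoking translation invariance later is not fully consistent.)

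The genuine gap is your opening reduction: from ``the conclusion fails and $\liminf_n\|u_n-v_n\|_{\cD}<\kappa$'' you cannot pass to a subsequence with $\|u_n-v_n\|_{\cD}\to\alpha\in(0,\kappa)$. The set of subsequential limits of a real sequence need not be an interval, and the hypotheses are compatible with $\|u_n-v_n\|_{\cD}$ oscillating between values tending to $0$ and values $\ge\kappa$: take $u_n\equiv\bar u$ a nontrivial critical point of $\cJ\circ m$ and $v_n$ alternating between $\bar u$ and a translate $\bar u(\cdot+z)$, $z\in\Z^3\setminus\{0\}$; both are Cerami sequences with admissible levels, $\liminf_n\|u_n-v_n\|_{\cD}=0<\kappa$, yet the distance does not converge. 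In that scenario your contradiction argument never applies, so it cannot be closed as written. What your dichotomy actually proves is that every subsequential limit of $\|u_n-v_n\|_{\cD}$ lies in $\{0\}\cup[\kappa,\infty)$, hence $\liminf_n\|u_n-v_n\|_{\cD}=0$; this is precisely what is needed to verify condition $(M)_0^\beta$(b) (whose hypothesis is $\|u_n-v_n\|<m_0^\beta$ for large $n$ and whose conclusion is only $\liminf_n\|u_n-v_n\|=0$), but it is strictly weaker than the asserted $\lim_n\|u_n-v_n\|_{\cD}=0$. To obtain a full limit you should instead run the dichotomy on an arbitrary subsequence under a smallness hypothesis of limsup type, e.g.\ $\|u_n-v_n\|_{\cD}\le\kappa-\eps$ for all large $n$ (then the non-vanishing alternative is impossible on every subsequence, and the vanishing alternative forces convergence to $0$ of the whole sequence), or else prove and use the statement with the $\liminf$ conclusion.
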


\begin{altproof}{Theorem \ref{ThMain} $(b)$}\mbox{}\\ 
In order to complete the proof we use directly Theorem \ref{Th:CrticMulti} $(b)$. That (I1)--(I8) are satisfied and $(M)_0^\beta$ holds for all $\beta>0$ follows from Proposition \ref{PropDefOfm(u)}, Lemma \ref{LemCoercive}, and Lemma \ref{Discreteness}.
\end{altproof}

\section{Proof of Theorem \ref{Th:main1}}\label{sec:T2}

Recall that $V=0$, $f(x,u):=|u|^4u$, and $\Phi(t)=\frac16|t|^6$.
In view of Proposition \ref{PropDefOfm(u)} there is a Cerami sequence $(v_n,w_n)\subset \cM$ at the level $c_{\cN}$, so in particular, $\cJ'\bigl(m(v_n)\bigr)\to 0$ as $n\to\infty$. \\
\indent For $s>0$, $y\in\R^3$, and $u\colon\R^3\to\R^3$ we denote $T_{s,y}(u):= s^{1/2}u(s\cdot +y))$. 
The following lemma is a special case of \cite[Theorem 1]{Solimini}, see also \cite[Lemma 5.3]{Tintarev}.

\begin{Lem}\label{lem:Solimini}
	Suppose that $(v_n)\subset \cD$ is bounded. Then $v_n\to 0$ in $L^6(\R^3,\R^3)$ if and only if $T_{s_n,y_n}(v_n)\weakto 0$ in 
$\cD$  for all $(s_n)\subset\R^+$ and $(y_n)\subset \R^3$.
\end{Lem}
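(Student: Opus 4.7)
The plan is to exploit the fact that $T_{s,y}$ is an isometry on both $\cD = \D^{1,2}(\R^3,\R^3)$ and $L^6(\R^3,\R^3)$: a change of variables $z = sx + y$ gives $\int_{\R^3}|\nabla T_{s,y}(u)|^2\,dx = s^3\cdot s^{-3}\int_{\R^3}|\nabla u|^2\,dz$ and analogously for the $L^6$-norm, which is precisely why the critical exponent $6$ and the weight $s^{1/2}$ are paired in $3$ dimensions. I will also use the continuous Sobolev embedding $\cD \hookrightarrow L^6$ throughout.

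For the easy (``only if'') direction, assume $v_n \to 0$ in $L^6$. Since $T_{s_n,y_n}$ preserves the $L^6$-norm, $T_{s_n,y_n}(v_n) \to 0$ strongly, hence weakly, in $L^6$, and it remains bounded in $\cD$. Extracting any weak $\cD$-limit $w$ along a subsequence, by continuity of the Sobolev embedding the same subsequence converges weakly to $w$ in $L^6$ as well, forcing $w = 0$. A standard subsequence-of-subsequence argument then gives $T_{s_n,y_n}(v_n)\weakto 0$ in $\cD$ for the full sequence.

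The hard (``if'') direction is the crux and I would argue by contrapositive: assuming $\limsup_n |v_n|_6 > 0$, the goal is to construct sequences $(s_n),(y_n)$ such that $T_{s_n,y_n}(v_n)$ has a nonzero weak limit in $\cD$. This is exactly the content of the profile-decomposition theorem of \cite{Solimini} for bounded sequences in $\cD^{1,2}$. The idea is to introduce a scale-invariant concentration function (essentially $t \mapsto \sup_y \int_{B(y,t)}|v_n|^6\,dx$), select scales $s_n$ at which the local $L^6$-mass of $v_n$ does not vanish, and then translations $y_n$ capturing a positive fraction of that mass; the rescaled profile $T_{s_n,y_n}(v_n)$ is then bounded in $\cD$ and does not converge weakly to zero.

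The main obstacle is this extraction of a nonzero profile, since no compact embedding is available at the critical Sobolev exponent and both translations \emph{and} dilations defeat naive compactness arguments. Given the technical weight of the full proof (which relies on dyadic/atomic Besov-type decompositions), my write-up would present only the short easy direction in detail and, for the converse, cite \cite{Solimini} and \cite[Lemma 5.3]{Tintarev} as the authors already do, noting that the present statement is the scalar-to-vectorial componentwise version of their result.
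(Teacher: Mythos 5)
Your proposal is correct and takes essentially the same route as the paper: the authors give no proof at all, simply invoking \cite[Theorem 1]{Solimini} and \cite[Lemma 5.3]{Tintarev}, which is exactly how you treat the hard ``if'' (cocompactness) direction. Your elementary argument for the ``only if'' direction --- $T_{s,y}$ being an isometry of both $L^6$ and $\cD$, boundedness in $\cD$, and a subsequence-of-subsequences argument via the weak-to-weak continuity of the Sobolev embedding --- is a correct (and harmless) addition to what the paper records.
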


We recall the following properties \cite{MSz}:
\begin{Lem} \label{isom}
	$T_{s,y}$ is an isometric isomorphism of $W^6(\curl;\R^3)$ which leaves the functional $\cE$ and the subspaces $\cV, \cW$ invariant. In particular, $w(T_{s,y}u)=T_{s,y}w(u)$.
\end{Lem}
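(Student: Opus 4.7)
The plan is to verify each claim by direct change of variables. First I would compute $|T_{s,y}u|_6^6 = s^3\int_{\R^3}|u(sx+y)|^6\,dx = |u|_6^6$ via the substitution $z = sx+y$, so $T_{s,y}$ is an $L^6$-isometry. Then, by the chain rule, $\curlop(T_{s,y}u)(x) = s^{3/2}(\curlop u)(sx+y)$, and the same substitution gives $|\curlop(T_{s,y}u)|_2 = |\curlop u|_2$, so $T_{s,y}$ preserves the norm of $W^6(\curl;\R^3)$. A direct computation shows that $T_{1/s,-y/s}$ is a two-sided inverse of $T_{s,y}$, so $T_{s,y}$ is an isometric isomorphism. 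Since $\cE(u) = \tfrac12|\curlop u|_2^2 - \tfrac16|u|_6^6$, invariance of $\cE$ follows immediately.

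For the invariance of $\cV$, the chain rule yields $\div(T_{s,y}u)(x) = s^{3/2}(\div u)(sx+y)$, so $\div u = 0$ (in the distributional sense) if and only if $\div(T_{s,y}u)=0$; hence $T_{s,y}(\cV) = \cV$. For $\cW$, I would first treat the dense subset of gradients: $T_{s,y}(\nabla\varphi)(x) = s^{1/2}(\nabla\varphi)(sx+y) = \nabla\bigl(s^{-1/2}\varphi(s\cdot+y)\bigr)(x)$, and since $s^{-1/2}\varphi(s\cdot+y)\in\cC^\infty_0(\R^3)$ whenever $\varphi\in\cC^\infty_0(\R^3)$, $T_{s,y}$ maps $\{\nabla\varphi:\varphi\in\cC^\infty_0\}$ into itself. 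Because $L^\Phi=L^6$ here and $T_{s,y}$ is continuous on $L^6$, density of gradients in $\cW$ gives $T_{s,y}(\cW)\subset\cW$; applying the argument to $T_{1/s,-y/s}$ gives the reverse inclusion.

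Finally, to prove $w(T_{s,y}u) = T_{s,y}w(u)$, I would use the variational characterization from Lemma \ref{LemDefofW}: $w(u)$ is the unique element of $\cW$ minimizing $\cI(u,\cdot)$. Since $V=0$ and $\Phi(t)=\tfrac16|t|^6$, we have $\cI(u,w) = I(u+w) = \tfrac16\int_{\R^3}|u+w|^6\,dx$, and the $L^6$-isometry property gives $\cI(T_{s,y}u,T_{s,y}w') = I(T_{s,y}(u+w')) = I(u+w') = \cI(u,w')$ for every $w'\in\cW$. Because the map $w'\mapsto T_{s,y}w'$ is a bijection of $\cW$ onto itself by the previous step, the minimizer of $\cI(T_{s,y}u,\cdot)$ over $\cW$ is the image under $T_{s,y}$ of the minimizer of $\cI(u,\cdot)$; uniqueness then yields $w(T_{s,y}u) = T_{s,y}w(u)$.

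I do not anticipate any serious obstacle, as the proof reduces to careful scaling bookkeeping. The only subtle point is tracking the powers of $s$: the prefactor $s^{1/2}$ in the definition of $T_{s,y}$ combines with the chain-rule factor $s$ from differentiation to produce $s^{3/2}$ on $\curlop u$ and $\div u$, which is exactly the exponent that makes the $L^2$-norms of these derivatives invariant in three dimensions and simultaneously keeps the $L^6$-norm of $u$ itself invariant.
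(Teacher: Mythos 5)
Your proof is correct: the change-of-variables computations for the $L^6$- and $L^2$-norms, the invariance of $\cV$ and $\cW$, and the equivariance $w(T_{s,y}u)=T_{s,y}w(u)$ via the uniqueness of the minimizer in Lemma \ref{LemDefofW} are exactly the standard argument. The paper itself only recalls this lemma from \cite{MSz} without proof, and your direct verification matches the intended (and essentially unique) approach.
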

Moreover, in view of \cite[Theorem 3.1]{MSz} the topological manifold 
\begin{equation*}
	\cM=\Big\{u\in W^6(\curl;\R^3): \div(|u|^4u)=0\Big\}
\end{equation*}
is locally compactly embedded in $L^p(\R^3,\R^3)$ for $1\leq p<6$.

\begin{altproof}{Theorem \ref{Th:main1}} 
We refer the reader to \cite{MSz}, where the inequality $S_\curl>S$ has been proved and $c_{\cN}=\inf_{\cN_{\cE}}\cE=\frac13S_{\curl}^{3/2}$ is obtained. We demonstrate  how the previous results can be applied to show the existence of ground state solutions in the critical case.\\
\indent 	Observe that
	\begin{equation} \label{ac}
		\cJ\bigl(m(v_n)\bigr) = \cJ\bigl(m(v_n)\bigr) -\frac16\cJ'\bigl(m(v_n)\bigr)[m(v_n)] = \frac13|\curlop v_n|^2_2 = \frac13|\nabla v_n|^2_2
	\end{equation}
	and $|\nabla\cdot|_2$ is an equivalent norm in $\cV$, hence $(v_n)$ is bounded. 
	In view of Lemma \ref{LemDefofW}, $\bigl(m(v_n)\bigr)$ is bounded and since
	\begin{equation} \label{ca}
		c_{\cN}\leq \cJ\bigl(m(v_n)\bigr) = \cJ(m(v_n)) -\frac12\cJ'\bigl(m(v_n)\bigr)[m(v_n)]  = \frac13|m(v_n)|^6_6,
	\end{equation} 
	 $|v_n|_6$ is bounded away from $0$.	  
	Therefore, passing to a subsequence and using Lemma \ref{lem:Solimini}, $\tv_n:=T_{s_n,y_n}(v_n)\weakto v_0$ for some $v_0\neq 0$, $(s_n)\subset\R^+$, and $(y_n)\subset \R^3$. Taking subsequences again we also have that $\tv_n\to v_0$ a.e. in $\R^3$ and in view of  the compact embedding of $\cM$ into $L^2_{loc}(\R^3,\R^3)$, $w(\tv_n)\weakto w_0$ and $w(\tv_n)\to w_0$ a.e. in $\R^3$. From Vitali's convergence theorem we obtain $I'(v_0+w_0)[w]=0$ for all $w\in\cW$ and so, since $I$ is strictly convex, $w_0=w(v_0)$.
	We set $u:=v_0+w(v_0)$ and by Lemma \ref{isom} we may assume without loss of generality that $s_n=1$ and $y_n=0$. So if $z\in W_0^6(\curl;\R^3)$, then using weak and a.e. convergence,
	\[
	\cJ'\bigl(m(v_n)\bigr)\vp= \int_{\R^3}\langle\curlop v_n, \curlop \vp\rangle\,dx -\int_{\R^3}\langle|m(v_n)|^4m(v_n),\vp\rangle\,dx \to J'(u)\vp
	\]
	for any $\vp\in\cC_0^{\infty}(\R^3,\R^3)$.
	Thus  $u$ is a solution to \eqref{eq}. To show it is a ground state, we note that using Fatou's lemma,
	\begin{eqnarray*}
		c_{\cN}& = & \cJ\bigl(m(v_n)\bigr)+o(1) = \cJ\bigl(m(v_n)\bigr)-\frac12\cJ'\bigl(m(v_n)\bigr)[m(v_n)]+o(1) = \frac13|m(v_n)|^6_6 + o(1) \\
		& \ge & \frac13|u|^6_6 + o(1) = \cJ(u)-\frac12\cJ'(u)[u]+o(1) = \cJ(u)+o(1).
	\end{eqnarray*}
	Hence $\cJ(u)\le c_{\cN}$ and as a solution, $u\in\cN$, and (b) is proved.
\end{altproof}

{\bf Acknowledgements.}
The authors were partly supported by the National Science Centre, Poland (Grant No. 2017/26/E/ST1/00817). J.~S. was partly supported also by the Grant No. 2020/37/N/ST1/00795.


\begin{thebibliography}{99}
	\baselineskip 2 mm
	
	
	\bibitem{AAS-C} N. Akhmediev, A. Ankiewicz, J. M. Soto-Crespo: {\em Does the nonlinear Schr\"odinger equation correctly describe beam propagation?}, Opt. Lett {\bf 18} (1993), no. 6, 411--413.
	
	\bibitem{BenForAzzAprile} A. Azzollini, V. Benci, T. D'Aprile, D. Fortunato: 
	{\em Existence of static solutions of the semilinear Maxwell equations}, Ric. Mat. {\bf 55} (2006), no. 2, 283--297.
	
	
	\bibitem{BDPR:2016} T. Bartsch, T. Dohnal, M. Plum, W. Reichel: {\em Ground states of a nonlinear curl-curl problem in cylindrically symmetric media}, Nonlin. Diff. Equ. Appl. {\bf 23:52} (2016), no. 5, 34 pp.
	
	\bibitem{BartschMederski} T. Bartsch, J. Mederski: {\em Ground and bound state solutions of semilinear time-harmonic Maxwell equations in a bounded domain}, Arch. Rational Mech. Anal. {\bf 215} (1) (2015), 283--306.
	
	\bibitem{BartschMederskiJFA} T. Bartsch, J. Mederski: {\em Nonlinear time-harmonic Maxwell equations in an anisotropic bounded medium}, J. Funct. Anal. {\bf 272} (2017), no. 10, 4304--4333. 
	
	\bibitem{BartschMederskiJFTA} T. Bartsch, J. Mederski: {\em Nonlinear time-harmonic Maxwell equations in domains}, J. Fixed Point Theory Appl. {\bf 19} (2017), no. 1, 959--986.
	
	
	\bibitem{BenFor} V. Benci, D. Fortunato: 
	{\em Towards a unified field theory for classical electrodynamics}, 
	Arch. Rational Mech. Anal. {\bf 173} (2004), 379--414.
	
	
	\bibitem{BenciRabinowitz} V. Benci, P. H. Rabinowitz:
	{\em Critical point theorems for indefinite functionals},
	Invent. Math. {\bf 52} (1979), no. 3, 241--273.
	
	\bibitem{BerLions} H. Berestycki, P.L. Lions: {\em Nonlinear scalar field equations, I - existence of a ground state}, Arch. Ration. Mech. Anal. {\bf 82} (1983), 313--345.
	
	\bibitem{BornInfeld} M. Born, L. Infeld: {\em Foundations of the new field theory}, Proc. Roy. Soc. Lond. A {\bf 144} (1934), 425--451. 
	
	
	\bibitem{BrezisNirenberg} H. Brezis, L. Nirenberg: {\em Positive solutions of nonlinear elliptic equations involving critical Sobolev exponents}, Comm. Pure Appl. Math. {\bf 36} (1983), 437--477.
	
	\bibitem{BuffaAmmariNed} A. Buffa, H. Ammari, J. C. Nédélec:
	{\em Justification of Eddy Currents Model for the Maxwell Equations},
	SIAM J. Appl. Math., {\bf 60} (5), (2000), 1805--1823.
	
	\bibitem{CCDY} A. Ciattoni, B. Crosignani, P. Di Porto, A. Yariv: {\em Perfect optical solitons: spatial Kerr solitons as exact solutions of Maxwell’s equations}, J. Opt. Soc. Am. B {\bf 22} (2005), no. 7, 1384--1394.
	
	
	
	
	
	
	\bibitem{DAprileSiciliano} T. D'Aprile, G. Siciliano:
	{\em Magnetostatic solutions for a semilinear perturbation of the Maxwell equations},
	Adv. Differential Equations {\bf 16} (2011), no. 5--6, 435--466.
	
	
	
	\bibitem{Ding} W. Ding: {\em On a Conformally Invariant Elliptic Equation on $\R^n$}, Comm. Math. Phys. {\bf 107} (1986), no. 2, 331--335.
	
	\bibitem{GaczkowskiMS} M. Gaczkowski, J. Mederski, J.Schino: {\em Multiple solutions to cylindrically symmetric curl-curl problems and related Schr\"odinger equations with singular potentials}, submitted arXiv:2006.03565.
	
	\bibitem{HirschReichel} A. Hirsch, W. Reichel: {\em Existence of cylindrically symmetric ground states to a nonlinear curl-curl equation with non-constant coefficients},  Z. Anal. Anwend. {\bf 36} (2017), no. 4, 419--435
	
	
	
	\bibitem{KirschHettlich} A. Kirsch, F. Hettlich:
	{\em The Mathematical Theory of Time-Harmonic Maxwell's Equations: Expansion-, Integral-, and Variational Methods},
	Springer 2015.
	
	\bibitem{Leinfelder} H. Leinfelder: {\em Gauge invariance of Schr\"odinger operators and oelated spectral properties}, J. Operator Theory {\bf 9} (1983), 163--179.
	
	\bibitem{Leis68} R. Leis:
	{\em Zur Theorie elektromagnetischer Schwingungen in anisotropen inhomogenen Medien},
	Math. Z. {\bf 106 } (1968), 213--224.
	
	
	
	
	\bibitem{Mandel} R. Mandel: {\em Uncountably many solutions for nonlinear Helmholtz and curl-curl equations with general nonlinearities}, Adv. Nonlinear Stud. {\bf 19}, no. 3, (2019), 569--593.
	
	\bibitem{McStTr} J. B. McLeod, C. A. Stuart, W. C. Troy: {\em An exact reduction of Maxwell's equations}, Nonlinear diffusion equations and their equilibrium states, 3 (Gregynog 1989), 391--405, Progr. Nonlinear Differential Equations Appl., vol. 7, Birkh\"auser Boston, Boston, MA (1992).
	
	\bibitem{Mederski2015} J. Mederski: {\em Ground states of time-harmonic semilinear Maxwell equations in $\mathbb{R}^3$ with vanishing permittivity}, Arch. Rational Mech. Anal. {\bf 218} (2), (2015), 825--861.
	
	\bibitem{MederskiJFA2018} J. Mederski: {\em The Brezis-Nirenberg problem for the curl-curl operator},
	J. Funct. Anal. {\bf 274} (5) (2018), 1345--1380.
	
	\bibitem{MederskiZeroMass} J. Mederski: {\em General class of optimal Sobolev inequalities and nonlinear scalar field equations}, J. Differential Equations {\bf 281} (2021), 411--441.
	
	\bibitem{MSchSz} J. Mederski, J. Schino, A. Szulkin: {\em Multiple solutions to a nonlinear curl-curl problem in $\R^3$}, Arch. Rational Mech. Anal. 236 (2020), 253--288.
	
	\bibitem{MSz} J. Mederski, A. Szulkin: {\em Sharp constant in the curl inequality and ground states for curl-curl problem with critical exponent}, Arch. Rational Mech. Anal. (2021) 253-288 DOI 10.1007/s00205-021-01684-x
	
	\bibitem{Monk} P. Monk:
	{\em Finite Element Methods for Maxwell's Equations}, Oxford University Press 2003.
	
	\bibitem{Pankov} A. Pankov: {\em Periodic Nonlinear Schr\"odinger Equation with Application to Photonic Crystals}, Milan J. Math. {\bf 73} (2005), 259--287.
	
	\bibitem{RaoRen} Rao M. M., Ren Z. D., \emph{Theory of Orlicz spaces}, Pure and Applied Mathemathics, New York, 1991.
	
	\bibitem{SchinoPhD} J. Schino: {\em Ground state, bound state, and normalized solutions to semilinear Maxwell and Schr\"odinger equations}, PhD thesis (2021).
	
	
	\bibitem{Solimini} S. Solimini: {\em A note on compactness-type properties with respect to Lorentz norms of bounded subsets of a Sobolev space}, Ann. Inst. H. Poincar\'e Anal. Non Lin\'eaire {\bf 12} (1995), no. 3, 319--337. 
	
	
%
%
%
	
	

	
	
	\bibitem{Stuart91} C.A. Stuart: {\em Self-trapping of an electromagnetic field and bifurcation from the essential spectrum}, Arch. Rational Mech. Anal. {\bf 113} (1991), no. 1, 65--96.
	
	\bibitem{Stuart93} C.A. Stuart: {\em Guidance properties of nonlinear planar waveguides},  Arch. Rational Mech. Anal. {\bf 125} (1993), no. 1, 145-200.
	
	\bibitem{Stuart04} C. A. Stuart: {\em Modelling axi-symmetric travelling waves in a dielectric with nonlinear refractive index}, Milan J. Math. {\bf 72} (2004), 107--128.
		
	\bibitem{StuartZhou96} C. A. Stuart, H.-S. Zhou: {\em A variational problem related to self-trapping of an electromagnetic field}, Math. Methods Appl. Sci. {\bf 19} (1996), no. 17, 1397--1407.
	
	\bibitem{StuartZhou03} C. A. Stuart, H.-S. Zhou: {\em A constrained minimization problem and its application to guided cylindrical TM-modes in an anisotropic self-focusing dielectric}, Calc. Var. Partial Differential Equations {\bf 16} (2003), no. 4, 335--373.
	
	\bibitem{StuartZhou05} C. A. Stuart, H.-S. Zhou: {\em Axisymmetric TE-modes in a self-focusing dielectric}, SIAM J. Math. Anal. {\bf 37} (2005), no. 1, 218--237.
	
	\bibitem{StuartZhou01} C. A. Stuart, H.-S. Zhou: {\em Existence of guided cylindrical TM-modes in a homogeneous self-focusing dielectric}, Ann. Inst. H. Poincar\'e Anal. Non Lin\'eaire {\bf 18} (2001), no. 1, 69--96.
	
	\bibitem{StuartZhou10} C. A. Stuart, H.-S. Zhou: {\em Existence of guided cylindrical TM-modes in an inhomogeneous self-focusing dielectric}, Math. Models Methods Appl. Sci. {\bf 20} (2010), no. 9, 1681--1719.
	
	\bibitem{SzulkinWeth} A. Szulkin, T. Weth: {\em Ground state solutions for some indefinite variational problems}, J. Funct. Anal. {\bf 257} (2009), no. 12, 3802--3822. 
	
	\bibitem{SzulkinWethHandbook} 
	A. Szulkin, T. Weth: {\em The method of Nehari manifold. Handbook of nonconvex analysis and applications}, Int. Press (2010), 597--632.
	
	\bibitem{Tintarev}  K. Tintarev, K.-H. Fieseler: {\em Concentration Compactness: Functional-analytic Grounds And Applications}, Imperial College Press 2007.
	
	
	\bibitem{Zeng} X. Zeng: {\em Cylindrically symmetric ground state solutions for curl-curl equations with critical exponent}, Z. Angew. Math. Phys. {\bf 68} (2017), no. 6, Art. 135, 12 pp
	
\end{thebibliography}
\end{document}